\newtheorem{theorem}{Theorem}[section]
\newtheorem{corollary}{Corollary}[section]
\newtheorem{Lemma}[theorem]{Lemma}
\newtheorem{obs}{Observation}
\title[Fusions of the generalized Hamming scheme]{Fusions of the generalized Hamming scheme on a strongly-regular graph}
\author[A.~Herman]{Allen Herman\textsuperscript{1,*}}}
\thanks{\textsuperscript{*} Research supported in part by an NSERC Discovery Research Grant, Application No.: RGPIN-2017-05331.}
\address{\textsuperscript{1} Department of Mathematics and Statistics, University of Regina, Regina, Saskatchewan S4S 0A2, Canada}
\email{Allen.Herman@uregina.ca}
{\author[N.~Joshi]{Neha Joshi\textsuperscript{2}}}
\address{\textsuperscript{2} Department of Mathematics and Statistics, University of Regina, Regina, Saskatchewan S4S 0A2, Canada}
\email{Njp008@uregina.ca}
\author[K.~Meagher]{Karen Meagher\textsuperscript{3,**}}
\address{\textsuperscript{3} Department of Mathematics and Statistics, University of Regina, Regina, Saskatchewan
S4S 0A2, Canada}
\email{karen.meagher@uregina.ca} 
\thanks{\textsuperscript{**} Research supported in part by an NSERC Discovery Research Grant,
    Application No.: RGPIN-2018-03952.}
\thanks{The authors have no relevant financial or non-financial interests to disclose.}
\begin{document}

\begin{abstract}
 In this paper we show that for any fusion $\mathcal{B}$ of an association scheme $\mathcal{A}$, the generalized Hamming scheme $H(n,\mathcal{B})$ is a nontrivial fusion of $H(n,\mathcal{A})$. We analyze the case where $\mathcal{A}$ is the association scheme on a strongly-regular graph, and determine the parameters of all strongly-regular graphs for which the generalized Hamming scheme, $H(2,\mathcal{A})$, has extra fusions, in addition to the one arising from the trivial fusion of $\mathcal{A}$. 
\end{abstract}

\subjclass[2020]{Primary 05E30; Secondary 05C25}

\keywords{fusion, association schemes, generalized Hamming scheme}

\maketitle

\section{Introduction}
The Hamming scheme $H(n,q)$ is one of the most important examples of an association scheme for coding theory. In 1992, Muzychuk \cite{HammingMuzychuk} proved that the Hamming scheme is \textit{fusion-primitive} (that is, has no nontrivial proper mergings) when $q\geq 4$. Generalized Hamming schemes were defined by Delsarte \cite{Delsarte} in 1973, where they were referred to as \textit{extensions}.
For a proof that the generalized Hamming scheme $H(n,\mathcal{A})$ is an association scheme for all choices of association scheme $\mathcal{A}$, see Godsil~\cite{HammingGodsil}.

 This work is motivated by a desire to understand the fusions of classical association schemes and their product schemes. In 1992, Sankey \cite{Sankey} gave the parameters of all pairs of strongly-regular graphs whose tensor product admits primitive strongly-regular fusions. Sankey's work addressed the question of the specific case of strongly-regular graphs, which are equivalent to homogeneous rank $3$ coherent configurations given by Higman~\cite{Higman}. Our work addresses the special case of the tensor product of a rank 3 scheme $\mathcal{A}$ with itself. We have found all possible fusions of $\mathcal{A} \otimes \mathcal{A}$ that exist for all $\mathcal{A}$, that is the fusion exists regardless of the eigenvalues of $\mathcal{A}$. The generalized Hamming scheme is one of these fusions. In the cases where the existence of a fusion depends on the parameters of $\mathcal{A}$, we give the required parameters of $\mathcal{A}$ and find examples of strongly-regular graphs admitting these parameters. We should also remark here that the same techniques can be used to determine the parameters of symmetric rank $3$ table algebras or character algebras whose symmetric tensor algebra will have extra fusions.
 
 In Section \ref{Sec:Preliminaries}, we review the basic theory of commutative association schemes and strongly-regular graphs to provide enough background necessary for later sections of this paper. In Section \ref{Sec:GHS3}, we prove that the generalized Hamming scheme is never fusion-primitive because it always has a nontrivial homogeneous fusion, $H(n,\mathcal{T}_{\mathcal{A}})$. We give details about the Bannai-Muzychuk criterion in Section \ref{Sec:4BM} which we later use to rule out many possible fusions of $H(2,\mathcal{A})$.
 
 In Section~\ref{sec:classifying}, we show that non-trivial fusions, other than the homogeneous fusion of $H(2,\mathcal{A})$, only exist in seven exceptional cases, and in each case we identify the strongly-regular graphs with the required parameters. The main result of this paper, formulated in Theorem \ref{mainresult}, provides a complete classification of fusions of the symmetric square of $\mathcal{A}$. We note that the result is not new, as it was obtained in 1985 by I.A. Farad{\u z}ev in \cite{FadRussian}, a translation is included in~\cite{FadRussianTranslated} .
The results obtained in Section \ref{sec:classifying} coincide with the ones presented in \cite{FadRussian}. Although our results are not new, the context and proof technique is completely different from the one used in \cite{FadRussian}. Specifically, our approach is based on character tables of association schemes.

\section{Preliminaries}\label{Sec:Preliminaries}

A set of $n\times n \ (0,1)$-matrices, $\mathcal{A}=\{A_0,A_1,\dots,A_d\}$ is the set of adjacency matrices of a \textit{commutative association scheme} if it satisfies the following properties;
 
\begin{enumerate}
    \item $A_0= I$.
    \medskip
    \item $\sum_{i=0}^{d} {A_i}= J$.
    \medskip
    \item $A_i^{T} \in \ \mathcal{A}, \textrm{for all} \ i=0,\dots,d$.
    \medskip
    \item $A_{i}A_{j}=\sum\limits_{\ell=0}^{d} p_{ij}^{\ell}A_{\ell}$ for all $i,j=0,\dots,d$.
    \medskip
    \item $A_{i}A_{j}=A_{j}A_{i}$ for all $i,j= {0,\dots,d}$.
\end{enumerate}
\medskip

If $(A_{i})^{T}=A_{i}$ for all $i=0,\dots,d$, then $ \{A_1,\dots,A_d \}$ are the adjacency matrices of graphs $ \{X_1,\dots,X_d \}$ and the scheme is called \textit{symmetric}. These graphs are called \textit{graphs of the scheme}. The nonnegative integers 
$p_{ij}^{\ell}$ are known as the \textit{structure constants} of the association scheme. It is also well known that the symmetry of the scheme asserts that $p_{ij}^{\ell}=p_{ji}^{\ell}$. One can find other interesting examples of an association scheme in Bailey~\cite{Bailey}. 

The matrices of an association scheme generate a commutative algebra, $\mathcal{A}$, with dimension $d+1$, of symmetric matrices with constant diagonal. This algebra is called the \textit{Bose-Mesner algebra} of $\mathcal{A}$ \cite{BoseMesner}. 
It is an algebra with respect to the usual matrix product as well as to the \textit{Hadamard} (or \textit{Schur}) \textit{product}, defined for two matrices $A$ and $B$ of order $n$ as the componentwise product: $(A \circ B)_{ij}=A_{ij}B_{ij}$. This algebra is also commutative and associative relative to this product with unit $J_n$.

\subsection{Fusions of an association scheme}

We can also define the set of adjacency matrices of an association scheme as a set of relations on the elements of $V$. We say that two vertices $v$, $w$ in $V$ are in relation $R_{i}$ if the $(v,w)^{th}$ entry of adjacency matrix $A_{i}$ equals $1$. We can denote an association scheme $\mathcal{A}$ as $({V}, \{{R_{i}\}}_{i=0}^{d})$. If $\mathcal{B}$ is an association scheme that can be expressed as $({V}, \{{R'_{j}\}}_{j=0}^{d'})$ where each relation $R'_{j}$ of $\mathcal{B}$ is a union of the relations in $\mathcal{A}$, then $\mathcal{B}$ is said to be a \textit{fusion} of $\mathcal{A}$.

For any association scheme $\mathcal{A}=({V}, \{{R_{i}\}}_{i=0}^{d})$ there always exists two fusions, namely the \textit{trivial fusion} $({V}, \{ R_{0}, \bigcup\limits_{i=1}^{d} R_{i}\} )$ and the scheme itself. A fusion is said to be a \textit{proper} fusion if its rank is strictly less than the rank of $\mathcal{A}$.  An association scheme is said to be \textit{fusion-primitive} if its only proper fusion is the trivial fusion.

Assume $\mathcal{B}=({V}, \{{R'_{j}\}}_{j=0}^{d'})$ is a fusion of $\mathcal{A}$. Let $\mathcal{I}\subseteq\{1,\dots,d\}$ and denote $R_{\mathcal{I}}=\bigcup\limits_{i\in \mathcal{I}} R_{i}$. If $R_{\mathcal{I}}$ is a relation in $\mathcal{B}$ then we will say that $\mathcal{B}$ is an \textit{$R_{\mathcal{I}}$-isolating fusion}. In terms of adjacency matrices, this means $A_{\mathcal{I}}=\sum\limits_{i\in \mathcal{I}} A_{i}$ is a matrix in $\mathcal{B}$.

\subsection{Hamming scheme and strongly-regular graphs}

Consider a set $Q$ with $q$ elements. The vertices of the \textit{Hamming scheme} are $n$-tuples of elements of $Q$. Any two $n$-tuples $\alpha$, $\beta$ are in the $i^{th}$ relation if they differ in exactly $i$ positions (they are known to be at \textit{Hamming distance} $i$). The Hamming scheme $H(n,q)$ is an association scheme on $q^{n}$ vertices of rank $n+1$ (Muzychuk~\cite{HammingMuzychuk}).

A \textit{strongly-regular graph} is a regular graph with the property that for any two distinct vertices $x$ and $y$ have either $\lambda$ or $\mu$ common neighbors according to whether $x$ and $y$ are adjacent or not. These are called the \textit{parameters} of the strongly-regular graph.
A strongly-regular graph  $\mathrm{X}$ produces a rank $3$ symmetric association scheme, $\mathcal{A}=\{I,A(\mathrm{X}),A(\overline{\mathrm{X}})\}$ where $I$ is the identity relation, $A(\mathrm{X})$ is the adjacency matrix of the graph $\mathrm{X}$ and $A(\overline{\mathrm{X}})$ the adjacency matrix of the complement graph $\overline{\mathrm{X}}$.

Let $\mathrm{X}$ be a strongly-regular graph with parameters $(n,k,\lambda,\mu)$. The spectrum of $\mathrm{X}$ is $k$, $r\geq0$ (with multiplicity $f$), $s<0$ (with multiplicity $g$). We set $\ell=n-k-1$ and denote the table of eigenvalues of adjacency matrices in $\mathcal{A}$ (also known as the \textit{character table} of $\mathcal{A}$) as,

\[
\mathcal{P}(\mathcal{A}) = 
    \begin{blockarray}{cccc}
      \begin{block}{[ccc]c}
       1 & k & \ell & 1\\
       1 & r & -1-r  & f\\
    1 & s & -1-s  & g\\
      \end{block}
    \end{blockarray}.
\]
\medskip

The orthogonality relation for rows $2$ and $3$ of this character table implies $$1 + \frac{rs}{k} +\frac{(-1-r)(-1-s)}{\ell} = 0,$$ so we have $$s = -\frac{(k+kr+k\ell)}{(k+kr+r\ell)}.$$

\vspace{0.2cm}
We can also obtain the multiplicities from the eigenvalues using the inner products of rows in the character table as
$$1+\frac{r^{2}}{k}+\frac{(-1-r)^{2}}{\ell}=\frac{n}{f}$$
and
$$1+\frac{s^{2}}{k}+\frac{(-1-s)^{2}}{\ell}=\frac{n}{g}.$$

\vspace{0.2cm}
A strongly-regular graph $\mathrm{X}$ is \textit{imprimitive} if either $\mathrm{X}$ or $\overline{\mathrm{X}}$ is disconnected. If $\mathrm{X}$ is disconnected then, $\mu=0$, hence $\mathrm{X}=\cup{K_{n}}$ and $s=-1$. If $\overline{\mathrm{X}}$ is disconnected  then $\mathrm{X}$ is a complete multipartite graph and $r=0$. A strongly-regular graph is primitive if $r>0$ and $s<-1$. Any primitive strongly-regular graph is
connected, and hence $k>r$ (Brouwer~\cite{Brouwer}). A  rank $3$ \textit{imprimitive association scheme} is an association scheme defined on the vertices of an imprimitive strongly-regular graph or its complement. The character table of a rank $3$ imprimitive association scheme is

\[
\mathcal{P}(\mathcal{A}) = 
    \begin{blockarray}{cccc}
      \begin{block}{[ccc]c}
     1 & k & m(1+k)  & 1\\
       1 & k & -1-k & m\\
    1 & -1 & 0  & k(1+m)\\
      \end{block}
    \end{blockarray}.
\]

The following inequalities, known as \textit{the Krein conditions}, are widely used as feasibility conditions for the eigenvalues $k, r$ and $s$ of strongly-regular graphs. For more details about the Krein parameters, see Mano, Martins and Vieira~\cite{Scott}.

\begin{Lemma}\textbf{Nonexistence: The Krein Conditions}. 
If {${\mathrm{X}}$} is a strongly-regular graph with parameters as defined above, then each of the following inequalities hold:
\begin{equation}\tag{$Krein_{1}$}
    (r+1)(k+r+2rs)\leq (k+r)(s+1)^{2},
\end{equation}
\begin{equation}\tag{$Krein_{2}$}
    (s+1)(k+s+2rs)\leq (k+s)(r+1)^{2}.
\end{equation}
\end{Lemma}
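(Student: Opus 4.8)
\noindent\textit{Proof strategy.} The plan is to obtain both inequalities from the nonnegativity of the Krein parameters $q_{11}^{1}$ and $q_{22}^{2}$ of the rank~$3$ association scheme $\mathcal{A}=\{I,A(\mathrm{X}),A(\overline{\mathrm{X}})\}$ attached to $\mathrm{X}$. Let $E_0=\tfrac1n J$, $E_1$, $E_2$ be the primitive idempotents of the Bose--Mesner algebra of $\mathcal{A}$. Since these are the orthogonal projections onto the eigenspaces of $A(\mathrm{X})$, each $E_i$ is symmetric and positive semidefinite, so by the Schur product theorem every Hadamard product $E_i\circ E_j$ is positive semidefinite. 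Writing $E_i\circ E_j=\tfrac1n\sum_t q_{ij}^{t}E_t$ and using that the $E_t$ are pairwise orthogonal idempotents, the matrix $E_i\circ E_j$ acts on the $t$-th eigenspace as the scalar $\tfrac1n q_{ij}^{t}$; hence $q_{ij}^{t}\ge 0$ for all $i,j,t$.

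The next step is to make $q_{11}^{1}$ and $q_{22}^{2}$ explicit. Reading the idempotents off the dual of $\mathcal{P}(\mathcal{A})$ gives
\[
E_1=\tfrac1n\Big(f\,I+\tfrac{fr}{k}A(\mathrm{X})+\tfrac{f(-1-r)}{\ell}A(\overline{\mathrm{X}})\Big),\qquad
E_2=\tfrac1n\Big(g\,I+\tfrac{gs}{k}A(\mathrm{X})+\tfrac{g(-1-s)}{\ell}A(\overline{\mathrm{X}})\Big).
\]
As $I,A(\mathrm{X}),A(\overline{\mathrm{X}})$ have pairwise disjoint support and are idempotent under $\circ$, expanding $E_1\circ E_1$ and $E_2\circ E_2$ and re-expressing $I,A(\mathrm{X}),A(\overline{\mathrm{X}})$ in the idempotent basis via $\mathcal{P}(\mathcal{A})$ gives
\[
q_{11}^{1}=\frac{f^2}{n}\Big(1+\frac{r^3}{k^2}-\frac{(1+r)^3}{\ell^2}\Big),\qquad
q_{22}^{2}=\frac{g^2}{n}\Big(1+\frac{s^3}{k^2}-\frac{(1+s)^3}{\ell^2}\Big).
\]
Since $f^2/n>0$ and $g^2/n>0$, the conditions $q_{11}^{1}\ge0$ and $q_{22}^{2}\ge0$ are equivalent to $\ell^{2}(k^{2}+r^{3})\ge(1+r)^{3}k^{2}$ and $\ell^{2}(k^{2}+s^{3})\ge(1+s)^{3}k^{2}$ respectively.

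It remains to identify these two inequalities with $(Krein_{1})$ and $(Krein_{2})$, which is the one part that needs care. For a primitive strongly-regular graph the orthogonality relation $1+\tfrac{rs}{k}+\tfrac{(1+r)(1+s)}{\ell}=0$ rearranges to $\ell(k+rs)=-k(1+r)(1+s)$ with $k+rs=\mu\neq 0$. Multiplying $\ell^{2}(k^{2}+r^{3})-(1+r)^{3}k^{2}$ by $(k+rs)^{2}$ and using this relation to eliminate $\ell$, I expect the polynomial identity
\[
(k+rs)^{2}\big(\ell^{2}(k^{2}+r^{3})-(1+r)^{3}k^{2}\big)=k^{2}(1+r)^{2}(k-r)\big[(k+r)(s+1)^{2}-(r+1)(k+r+2rs)\big].
\]
Because $k>r$ for a primitive strongly-regular graph and the prefactor on the right is positive, $q_{11}^{1}\ge0$ is exactly $(Krein_{1})$; the identical computation with $r$ and $s$ interchanged (the relation $\ell(k+rs)=-k(1+r)(1+s)$ and the quantity $k+rs$ are symmetric in $r$ and $s$, and $k-s>0$ holds automatically since $s<0$) turns $q_{22}^{2}\ge0$ into $(Krein_{2})$. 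The imprimitive case, where $\mathcal{A}$ is the scheme of a disjoint union of complete graphs (equivalently a complete multipartite graph), is handled separately using the degenerate character table displayed above, for which both inequalities are immediate. The main obstacle is thus the verification of the displayed factorization identity; everything else is a short unwinding of definitions and of the Schur product theorem.
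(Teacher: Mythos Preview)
The paper does not supply its own proof of this lemma; it is stated as a known result with a reference to \cite{Scott}. Your argument is the standard derivation of the Krein conditions for a strongly-regular graph via the Schur product theorem and the nonnegativity of $q_{11}^{1}$ and $q_{22}^{2}$, and it is correct. In particular, the factorization identity you flag as the ``main obstacle'' does hold: after substituting $\ell(k+rs)=-k(1+r)(1+s)$, both sides reduce to
\[
k^{2}(1+r)^{2}\bigl[(1+s)^{2}(k^{2}+r^{3})-(1+r)(k+rs)^{2}\bigr],
\]
and a direct expansion shows that
\[
(1+s)^{2}(k^{2}+r^{3})-(1+r)(k+rs)^{2}=(k-r)\bigl[(k+r)(s+1)^{2}-(r+1)(k+r+2rs)\bigr].
\]
Since $k>r$ in the primitive case and $\mu=k+rs>0$, the sign analysis goes through, and your separate check of the imprimitive character table handles the degenerate cases.
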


\subsection{Tensor product scheme}

The \textit{tensor product scheme}, or \textit{Kronecker product}, of any two association schemes of rank $d$ and $d'$ with relations $A_{i}$ and $B_{j}$ respectively is the association scheme whose relations are $A_{i} \otimes B_{j}$ where $0\leq i \leq d$ and $0 \leq j \leq d'$.

In Section $5$, we see that the generalized Hamming scheme $H(2,\mathcal{A})$ is always a non-trivial proper fusion of the tensor product scheme $\mathcal{A}\otimes \mathcal{A}.$


\section{Generalized Hamming Scheme}\label{Sec:GHS3}

Let $\mathcal{A}$ be an association scheme with $d$ classes and vertex set $V$. Assume the order of $V$ is $q$. If $v$ and $w$ are elements of $V^{n}$, let $h(v,w)$ be the vector of length $d+1$ with $r^{th}$-entry equal to the number of coordinates $j$ such that $v_{j}$ and $w_{j}$ are $r$-related in $\mathcal{A}$. For any $n$-tuples $v$ and $w$ the vector $h(v,w)$ has non-negative integer entries, and these entries sum to $n$. Conversely, any such vector can be written as $h(v,w)$ for some $v$ and $w$. 

Define a set $U$ consisting of length $d+1$ vectors, $h(v,w)$ with entries defined as above. The number of such vectors equals the order of $U$, so  $|U|=\binom{n+d}{d}$. Let $x$ be a non-negative integer vector of length $d+1$ with entries summing to $n$, let $A_{x}$ be the $01$-matrix with rows and columns indexed by $V^{n}$. The $(v,w)$-entry of $A_{x}$ equals to $1$ if and only if $h(v,w)=x$.

We denote the set of matrices, $\{A_{x}, \ x\in U\}$ by $H(n,\mathcal{A})$. $H(n,\mathcal{A})$ is an association scheme with vertex set $V^{n}$ and relations $R_{x}$, where $R_{x}$ is the relation on $V^{n}$ with adjacency matrix $A_{x}$. This is known as the \textit{generalized Hamming scheme} on the scheme $\mathcal{A}$. The vertices of generalized Hamming scheme consists of $n$-tuples with entries from $V$, so it is of order $q^{n}$. The rank of $H(n,\mathcal{A})$ is the number of non-negative integer vectors of length $d+1$ with entries summing to $n$, $\binom{n+d}{d}$.  It is easy to observe that if $\mathcal{A}=H(1,q)$ then $H(n,\mathcal{A})$ is the Hamming scheme $H(n,q)$ and $H(1,\mathcal{A})=\mathcal{A}$. The next result is from Godsil~\cite{HammingGodsil}.

\begin{theorem}\label{thm:godsil}
If $\mathcal{A}$ is an association scheme, then $H(n,\mathcal{A})$ is an association scheme.
\end{theorem}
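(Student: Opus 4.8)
The plan is to verify the five defining axioms of a commutative association scheme directly for the matrix set $H(n,\mathcal{A}) = \{A_x : x \in U\}$, exploiting the fact that the generalized Hamming relations are built coordinatewise from the relations of $\mathcal{A}$. The key structural observation I would establish first is a tensor-type decomposition: for a vector $x = (x_0, x_1, \dots, x_d) \in U$ with $\sum_r x_r = n$, the matrix $A_x$ can be written as
\[
A_x = \sum_{\phi} A_{\phi(1)} \otimes A_{\phi(2)} \otimes \cdots \otimes A_{\phi(n)},
\]
where the sum is over all functions $\phi : \{1,\dots,n\} \to \{0,1,\dots,d\}$ such that $|\phi^{-1}(r)| = x_r$ for every $r$. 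Indeed, the $(v,w)$-entry of $A_{\phi(1)} \otimes \cdots \otimes A_{\phi(n)}$ is $1$ exactly when $v_j$ and $w_j$ are $\phi(j)$-related for all $j$, and summing over all $\phi$ with the prescribed fiber sizes picks out precisely those pairs with $h(v,w) = x$. This identity reduces every axiom to a statement about tensor products of the $A_i$'s together with a counting argument over the index functions $\phi$.

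From here the axioms follow in order. Axiom (1): $A_0 = I$ corresponds to $x = (n,0,\dots,0)$, the unique $\phi \equiv 0$, giving $I^{\otimes n} = I_{q^n}$. Axiom (2): as $x$ ranges over $U$ the index functions $\phi$ range over all of $\{0,\dots,d\}^{\{1,\dots,n\}}$ with no repetition, so $\sum_{x \in U} A_x = \sum_\phi \bigotimes_j A_{\phi(j)} = \bigotimes_{j=1}^n \big(\sum_{i=0}^d A_i\big) = J_q^{\otimes n} = J_{q^n}$; in particular the $A_x$ are $(0,1)$-matrices with disjoint supports partitioning the all-ones matrix. Axiom (3): $A_x^T = \bigotimes$ of transposes, and since $A_i^T = A_{i'}$ for some $i'$ with $\mathcal{A}$ closed under transpose, $A_x^T = A_{x'}$ where $x'$ is the vector obtained by the induced permutation of coordinates of $x$; this $x'$ again lies in $U$. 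Axiom (5) (commutativity): $A_x A_y = \sum_{\phi,\psi} \bigotimes_j (A_{\phi(j)} A_{\psi(j)})$, and since each $A_{\phi(j)} A_{\psi(j)} = A_{\psi(j)} A_{\phi(j)}$ by commutativity of $\mathcal{A}$, reindexing gives $A_x A_y = A_y A_x$.

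The substantive step is Axiom (4), that $A_x A_y$ is a nonnegative integer combination of the $A_z$'s. Expanding, $A_x A_y = \sum_{\phi : |\phi^{-1}(r)| = x_r} \ \sum_{\psi : |\psi^{-1}(r)| = y_r} \ \bigotimes_{j=1}^n \big(A_{\phi(j)} A_{\psi(j)}\big)$, and substituting $A_{\phi(j)} A_{\psi(j)} = \sum_{\ell} p_{\phi(j)\psi(j)}^{\ell} A_\ell$ and distributing the tensor product over these sums shows $A_x A_y$ is a nonnegative integer combination of products $\bigotimes_j A_{\eta(j)}$ over index functions $\eta$. Collecting terms by the fiber-size vector $z$ of $\eta$, one gets $A_x A_y = \sum_{z \in U} p_{xy}^z A_z$; the point requiring care is that the coefficient of $\bigotimes_j A_{\eta(j)}$ depends only on the vector $z = (|\eta^{-1}(0)|,\dots,|\eta^{-1}(d)|)$ and not on $\eta$ itself, which follows because the whole expression is invariant under permuting the $n$ coordinates (the sets of admissible $\phi$ and $\psi$ are permutation-stable). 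That symmetry argument, plus keeping the bookkeeping of the triple sum over $\phi,\psi,\ell$ organized, is where the real work lies; everything else is formal. Finally, $|U| = \binom{n+d}{d}$ is the standard stars-and-bars count of nonnegative integer vectors of length $d+1$ summing to $n$, giving the stated rank.
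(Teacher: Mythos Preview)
Your proof is correct. The tensor decomposition
\[
A_x = \sum_{\phi : |\phi^{-1}(r)|=x_r \ \forall r} A_{\phi(1)}\otimes\cdots\otimes A_{\phi(n)}
\]
is exactly the right structural tool, and each axiom check you outline goes through; in particular your handling of Axiom~(4) via $S_n$-invariance of the index sets of admissible $\phi$ and $\psi$ is the standard and correct way to see that the coefficient of $\bigotimes_j A_{\eta(j)}$ depends only on the fiber-size vector of $\eta$.

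Comparison with the paper: the paper does not prove Theorem~\ref{thm:godsil} at all. It is stated as a known result and attributed to Godsil~\cite{HammingGodsil}, with no argument supplied. So there is nothing in the paper to compare your proof against beyond the citation. That said, your approach is essentially the one implicit in the paper's later use of $H(2,\mathcal{A})$ as a fusion of $\mathcal{A}\otimes\mathcal{A}$ (Section~\ref{sec:classifying}): what you have written out is precisely the observation that $H(n,\mathcal{A})$ is the fusion of $\mathcal{A}^{\otimes n}$ obtained by merging basis elements in the same $S_n$-orbit, which is also how Godsil proceeds. Your write-up is self-contained where the paper simply defers to the literature.
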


\begin{theorem}\label{FusionGH}
Let $\mathcal{A}=({V}, \{{R_{i}\}}_{i=0}^{d})$ be a $d$-class association scheme and $\mathcal{B}=({V}, \{{R'_{j}\}}_{j=0}^{d'})$ be any arbitrary fusion of $\mathcal{A}$. Then $H(n,\mathcal{B})$ is a non-trivial fusion of $H(n,\mathcal{A})$.
\end{theorem}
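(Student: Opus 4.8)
The plan is to translate the fusion $\mathcal{B}$ of $\mathcal{A}$ into a combinatorial ``merging map'' between the index sets of the two generalized Hamming schemes, verify that this map exhibits every relation of $H(n,\mathcal{B})$ as a union of relations of $H(n,\mathcal{A})$, and then invoke Theorem~\ref{thm:godsil} (so that both $H(n,\mathcal{A})$ and $H(n,\mathcal{B})$ are genuinely association schemes on the common vertex set $V^n$) to conclude. First I would record the data of the fusion: that $\mathcal{B}=(V,\{R'_j\}_{j=0}^{d'})$ is a fusion of $\mathcal{A}=(V,\{R_i\}_{i=0}^{d})$ means there is a partition $\{0,1,\dots,d\}=S_0\sqcup S_1\sqcup\dots\sqcup S_{d'}$ with $S_0=\{0\}$ and $R'_j=\bigcup_{i\in S_j}R_i$. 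I would then define $\phi$ on nonnegative integer vectors of length $d+1$ by $\phi(x)_j=\sum_{i\in S_j}x_i$; since the $S_j$ partition $\{0,\dots,d\}$, the map $\phi$ preserves coordinate sums and hence restricts to a surjection from the index set $U_{\mathcal{A}}$ of $H(n,\mathcal{A})$ onto the index set $U_{\mathcal{B}}$ of $H(n,\mathcal{B})$.

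The crux is the identity $h_{\mathcal{B}}(v,w)=\phi\bigl(h_{\mathcal{A}}(v,w)\bigr)$ for all $v,w\in V^n$, with $h_{\mathcal{A}}$, $h_{\mathcal{B}}$ the count vectors of Section~\ref{Sec:GHS3}. This is immediate from the fact that the $R_i$ partition $V\times V$: a coordinate $\ell$ contributes to the $j$-th entry of $h_{\mathcal{B}}(v,w)$ exactly when $(v_\ell,w_\ell)\in R_i$ for the unique $i\in S_j$ it belongs to, so that entry equals $\sum_{i\in S_j}\bigl(h_{\mathcal{A}}(v,w)\bigr)_i$. Hence, for $y\in U_{\mathcal{B}}$,
\[
R_y=\{(v,w):h_{\mathcal{B}}(v,w)=y\}=\bigcup_{x\in\phi^{-1}(y)}\{(v,w):h_{\mathcal{A}}(v,w)=x\}=\bigcup_{x\in\phi^{-1}(y)}R_x ,
\]
the fibre being taken inside $U_{\mathcal{A}}$, so every relation of $H(n,\mathcal{B})$ is a union of relations of $H(n,\mathcal{A})$ (equivalently $A_y=\sum_{x\in\phi^{-1}(y)}A_x$ in matrix form). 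Together with Theorem~\ref{thm:godsil}, this already shows that $H(n,\mathcal{B})$ is a fusion of $H(n,\mathcal{A})$.

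For non-triviality I would simply compare ranks: $H(n,\mathcal{A})$ has rank $\binom{n+d}{d}$ while $H(n,\mathcal{B})$ has rank $\binom{n+d'}{d'}$, and since $\mathcal{B}$ is a proper fusion of $\mathcal{A}$ we have $d'<d$, so $\binom{n+d'}{d'}<\binom{n+d}{d}$ for $n\ge1$ (the fusion is proper), while $\binom{n+d'}{d'}\ge3$ for $n\ge2$ (so it is not the trivial fusion $\{I,J-I\}$). I do not anticipate a real obstacle here: the content of the theorem is the bookkeeping identity $h_{\mathcal{B}}=\phi\circ h_{\mathcal{A}}$ together with the observation that the fibres of $\phi$ collect exactly those relations of $H(n,\mathcal{A})$ that get merged; the only points deserving attention are checking that $\phi$ is well defined and surjective and being precise about what ``non-trivial'' is meant to require.
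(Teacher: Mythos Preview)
Your proposal is correct and follows essentially the same approach as the paper: define the surjection $\phi:U_{\mathcal{A}}\to U_{\mathcal{B}}$ by summing coordinates over the parts of the fusion partition, show that each $A_y$ in $H(n,\mathcal{B})$ equals $\sum_{x\in\phi^{-1}(y)}A_x$, and invoke Theorem~\ref{thm:godsil}. Your write-up is in fact more careful than the paper's in two respects: you explicitly verify the key identity $h_{\mathcal{B}}=\phi\circ h_{\mathcal{A}}$ (the paper merely asserts the matrix equality), and you give an actual argument for non-triviality via the rank comparison $\binom{n+d'}{d'}<\binom{n+d}{d}$, whereas the paper only states non-triviality without justification.
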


\begin{proof}

The rows and columns of the matrices of the generalized Hamming scheme, $H(n,\mathcal{A})=\{C_{0},C_{1},\dots,C_{|U|}\}$ are labeled by the elements of $V^{n}$. For any $x$ in $U$, the $(v,w)$-entry of matrix $C_{x}$ is defined as
\[ C_{x}(v,w)= \begin{cases} 
          1 & if \  h(v,w)=x, \\
          0 & otherwise. 
       \end{cases}
    \]
We denote the vector corresponding to the identity relation as $e=(n,0,\dots,0)$. We are also given that the relations in $\mathcal{B}$ are a union of the relations of $\mathcal{A}$. Define the set $U'$ consisting of length $d'+1$ vectors, $h'(v,w)$ that sum to $n$. Similar to $H(n,\mathcal{A})$, the adjacency matrices of $H(n,\mathcal{B})=\{C'_{0},C'_{1},\dots,C'_{|U'|}\}$ are labeled by the elements of $V^{n}$. For any $y$ in $U'$, the $(v,w)^{th}$ entry of matrix $C'_{y}$ is defined as
\[ C'_{y}(v,w)= \begin{cases} 
          1 & if \  h'(v,w)=y, \\
          0 & otherwise. 
       \end{cases}
    \]
From Theorem~\ref{thm:godsil}, we know that $H(n,\mathcal{B})$ is an association scheme.
In order to prove that $H(n,\mathcal{B})$ is a fusion of $H(n,\mathcal{A})$, we only need to show that all the relations of $H(n,\mathcal{B})$ are a union of relations in $H(n,\mathcal{A})$.

We define a surjection $\phi: U \rightarrow U'$ such that for each $R'_{j}=\bigcup\limits_{i=1}^{i_{j}}R_{j_{i}}$, the vector $u\in U$ is mapped to a vector $u'\in U'$ with the entries of $u$ in positions $j_{1},j_{2},\dots,j_{i_{j}}$ summed together to produce the entry of $u'$ in position $j$, for $j=0,\dots,d'$. For each $C'_{y}$ in $U'$ there is a unique subset $\phi^{-1}(y)$ for which $C'_y = \sum_{x \in \phi^{-1}(y)} C_x$. 

In other words, the adjacency matrices $C_{x_{k}}$ corresponding to such relations in $H(n,\mathcal{A})$ are added to obtain the new relation $C'_{y_{k}}$ in $H(n,\mathcal{B})$. Since, $\mathcal{B}$ is already an association scheme, hence we can conclude that $H(n,\mathcal{B})$ will always be a non-trivial fusion of $H(n,\mathcal{A})$.

\end{proof}

For a fixed association scheme $\mathcal{A}$, we will denote the proper trivial fusion of $\mathcal{A}$ by $\mathcal{T}_{\mathcal{A}}$. It is clear that $d'=1$ for $\mathcal{T}_{\mathcal{A}}$, so $H(n,\mathcal{T}_{\mathcal{A}})$ has rank $\binom{n+1}{1}  = n+1$. From the above result, $H(n,\mathcal{T}_{\mathcal{A}})$ is always a non-trivial fusion of $H(n,\mathcal{A})$. We will refer to $H(n,\mathcal{T}_{\mathcal{A}})$ as the \textit{homogeneous fusion} of the generalized Hamming scheme. For any scheme $\mathcal{A}$, the generalized Hamming scheme will always have the homogeneous fusion.

\begin{corollary}
For all $n>1$, $H(n,\mathcal{A})$ is never fusion-primitive for any scheme $\mathcal{A}$.
\end{corollary}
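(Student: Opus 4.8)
The plan is to prove the contrapositive of "fusion-primitive" directly: for every $\mathcal{A}$ I will exhibit one explicit proper fusion of $H(n,\mathcal{A})$ that is different from its trivial fusion, namely the homogeneous fusion $H(n,\mathcal{T}_{\mathcal{A}})$ constructed above. First I would apply Theorem~\ref{FusionGH} with the fusion $\mathcal{B}$ taken to be the proper trivial fusion $\mathcal{T}_{\mathcal{A}}$ of $\mathcal{A}$ (so $\mathcal{A}$ is taken to have $d\ge 2$ classes, which is exactly the condition under which $\mathcal{T}_{\mathcal{A}}$ is a genuine proper fusion of $\mathcal{A}$). Theorem~\ref{FusionGH} then immediately gives that $H(n,\mathcal{T}_{\mathcal{A}})$ is a fusion of $H(n,\mathcal{A})$.

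It then remains to verify two numerical facts about ranks. For properness: $H(n,\mathcal{T}_{\mathcal{A}})$ has rank $\binom{n+1}{1}=n+1$, while $H(n,\mathcal{A})$ has rank $\binom{n+d}{d}$; since $d\ge 2$ we get $\binom{n+d}{d}\ge\binom{n+2}{2}=\tfrac{(n+2)(n+1)}{2}>n+1$ for every $n\ge 1$, so $H(n,\mathcal{T}_{\mathcal{A}})$ has strictly smaller rank and is therefore a proper fusion of $H(n,\mathcal{A})$. For non-triviality: the trivial fusion of any scheme on more than one vertex has rank $2$, whereas for $n>1$ the rank of $H(n,\mathcal{T}_{\mathcal{A}})$ is $n+1\ge 3$, so $H(n,\mathcal{T}_{\mathcal{A}})$ is not the trivial fusion of $H(n,\mathcal{A})$.

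Combining these, $H(n,\mathcal{A})$ possesses a proper fusion other than the trivial one, which is precisely the assertion that $H(n,\mathcal{A})$ is not fusion-primitive. I do not expect any real obstacle: essentially all of the content sits in Theorem~\ref{FusionGH}, and the only point requiring care is checking that the rank $n+1$ of the homogeneous fusion lies strictly between $2$ and $\binom{n+d}{d}$ — i.e. that the homogeneous fusion is simultaneously proper and non-trivial — which is exactly where the hypotheses $n>1$ and $d\ge 2$ enter.
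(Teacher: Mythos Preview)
Your proposal is correct and follows exactly the paper's own approach: the paper also derives the corollary immediately from Theorem~\ref{FusionGH} applied to $\mathcal{B}=\mathcal{T}_{\mathcal{A}}$, observing that the homogeneous fusion $H(n,\mathcal{T}_{\mathcal{A}})$ has rank $n+1$. Your explicit rank comparison $\binom{n+d}{d}>n+1>2$ and your remark that the hypothesis $d\ge 2$ is needed are more careful than what the paper writes, but the argument is the same.
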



\section{Calculating fusions from the character table}\label{Sec:4BM}

In this section, we give a very useful condition on the character table of an association scheme to have a nontrivial fusion.
Let $P$ be the $(d+1)\times (d+1)$-character table of a $d$-class association scheme. Let $\uptau=\{T_{0}=\{0\},T_{1},\dots,T_{d'}\}$ be a partition of the index set $\{0,\dots,d\}$. We define $P_{\uptau}$ to be the order $|\uptau|\times (d+1)$ matrix with the rows indexed by the classes $T \in \uptau$. The row corresponding to a class $T\in \uptau$ is the sum of rows in $P$ indexed by $t$ for all $t\in T$. Since $P$ is a non-singular matrix, the rank of $P_{\uptau}$ is equal to $|\uptau|$. Hence, the number of distinct columns in $P_{\uptau}$ is at least $|\uptau|$. The next Lemma, known as the \textit{Bannai-Muzychuk criterion} (see Muzychuk \cite{JohnsonMuzychuk}), states a necessary and sufficient condition for a partition $\uptau$ of the rows of $P$ to produce a fusion.

\begin{Lemma}{\label{Bannai-Muzychuk}}
A partition $\uptau=\{T_{0}=\{0\},\dots,T_{d'}\}$ of the index set $\{0,\dots,d\}$ determines a fusion scheme $\mathcal{B}=(V, \{{R'_{i}\}}_{i=0}^{d'})$ where $R'_{i}=\bigcup\limits_{j \in T_{i}}R_{j}$ if and only if the number of different columns in $P_{\uptau}$ equals $|\uptau|$.

\end{Lemma}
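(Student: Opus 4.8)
The statement to prove is the Bannai–Muzychuk criterion: a partition $\uptau = \{T_0 = \{0\}, \dots, T_{d'}\}$ of $\{0, \dots, d\}$ gives a fusion scheme exactly when $P_\uptau$ has precisely $|\uptau|$ distinct columns. The natural approach is to translate the statement about the number of distinct columns of $P_\uptau$ into a statement about the matrices $A_{T_i} = \sum_{j \in T_i} A_j$, using the fact that the matrix product in the Bose-Mesner algebra is simultaneously diagonalized by the primitive idempotents $E_0, \dots, E_d$, and that the character table $P$ records the eigenvalues: $A_j = \sum_{k} P_{kj} E_k$, equivalently $A_j E_k = P_{kj} E_k$. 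The plan is to show the two conditions are each equivalent to: the span of $\{A_{T_0}, \dots, A_{T_{d'}}\}$ is closed under matrix multiplication (the linear-algebra heart of being a fusion, the remaining scheme axioms — containing $I$, summing to $J$, closure under transpose, commutativity — being immediate from $\uptau$ being a partition with $T_0 = \{0\}$ and from $\mathcal{A}$ being a commutative scheme).

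First I would set up the eigenspace bookkeeping. Group the rows of $P$ (equivalently, the idempotents $E_k$) by which column-pattern they exhibit on $P_\uptau$: say rows $k$ and $k'$ are equivalent if $\sum_{j \in T_i} P_{kj} = \sum_{j \in T_i} P_{k'j}$ for every block $T_i$. This partitions $\{0, \dots, d\}$ into classes $S_0, \dots, S_{e}$, and the number of distinct columns of $P_\uptau$ equals the number of distinct \emph{rows} of $P_\uptau$ — wait, more carefully: since $P$ is invertible its row-rank is $d+1$, and $P_\uptau$ always has rank $|\uptau|$, so $P_\uptau$ has at least $|\uptau|$ distinct columns; the content of the hypothesis is that it has \emph{exactly} $|\uptau|$, which forces the $|\uptau| \times (d+1)$ matrix $P_\uptau$ to have exactly $|\uptau|$ distinct columns, i.e. its columns take only $|\uptau|$ values. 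I would show this is equivalent to the number of row-classes $S_m$ being exactly $|\uptau|$ (count: a rank-$|\uptau|$ matrix with exactly $|\uptau|$ distinct columns has its columns forming a basis-sized set, and by a symmetric/duality count on $P$ this matches the number of distinct rows of the "dual" grouping).

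Then comes the main computation, which I expect to be the crux. Set $F_m = \sum_{k \in S_m} E_k$ for each row-class. Each $A_{T_i}$ acts on the eigenspace $F_m$ as the scalar $\pi_{m,i} := \sum_{j \in T_i} P_{kj}$ (well-defined, independent of $k \in S_m$, by construction of $S_m$), so $A_{T_i} = \sum_m \pi_{m,i} F_m$ and hence every $A_{T_i}$ lies in $\mathrm{span}\{F_0, \dots, F_e\}$, which is a matrix algebra of dimension $e+1$. If the number of row-classes is $|\uptau| = d'+1$, then $\mathrm{span}\{F_m\}$ has dimension $d'+1$ and contains the $d'+1$ linearly independent matrices $A_{T_i}$ (independent because $P_\uptau$ has rank $|\uptau|$), so $\{A_{T_i}\}$ spans this algebra — in particular it is closed under matrix multiplication, and one checks the products $A_{T_i} A_{T_j}$ have nonnegative integer entries (they are sums of the $A_j$'s since they lie in the Bose-Mesner algebra and are $\{0,1\}$-combinations forced by the $R_{T_i}$ partitioning $V \times V$), giving a fusion scheme. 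Conversely, if $\uptau$ gives a fusion, then $\mathrm{span}\{A_{T_i}\}$ is a $(d'+1)$-dimensional algebra of commuting symmetric matrices, so it has $d'+1$ primitive idempotents, each a sum of $E_k$'s; these idempotents partition the $E_k$'s into exactly $d'+1$ classes on which the $A_{T_i}$'s are constant-scalar, which is exactly the statement that $P_\uptau$ has exactly $d'+1 = |\uptau|$ distinct columns.

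**The main obstacle.** The bookkeeping in converting "exactly $|\uptau|$ distinct columns of $P_\uptau$" into "exactly $|\uptau|$ classes of eigenspaces on which all $A_{T_i}$ are scalar" is the only genuinely delicate point — one must use the non-singularity of $P$ (so that collapsing rows never drops rank below $|\uptau|$, forcing the inequality "$\geq |\uptau|$ distinct columns" always) together with a rank/duality count to see that equality on the column side is equivalent to equality on the eigenspace-grouping side. Once that equivalence is in hand, both directions are short: the forward direction is "a commuting family of symmetric matrices closed under multiplication has a well-defined set of common eigenspaces," and the reverse is "a commutative matrix algebra is spanned by its primitive idempotents." I would present the eigenspace count as a preliminary lemma, then give the two implications as above.
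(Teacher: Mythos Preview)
The paper does not give its own proof of this lemma: it is stated as a known result (the Bannai--Muzychuk criterion), attributed to Muzychuk \cite{JohnsonMuzychuk}, and the reader is referred to Bannai \cite{Bannai} and Bannai--Song \cite{BannaiSong} for further discussion. So there is no in-paper proof to compare against.

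Your sketch is essentially the standard argument, and it is correct in outline. The core is exactly right: writing $A_{T_i} = \sum_m \pi_{m,i} F_m$ where the $F_m$ are the grouped idempotents, one sees that $\mathrm{span}\{A_{T_i}\}$ is closed under matrix multiplication if and only if it coincides with $\mathrm{span}\{F_m\}$, and a dimension count forces this to happen exactly when the number of eigenvalue-pattern classes equals $|\uptau|$. Two small points worth tightening. First, your row/column bookkeeping gets tangled (``number of distinct columns of $P_\uptau$ equals the number of distinct rows'' is not what you want to say); the clean statement is simply that the number of distinct columns of $P_\uptau$ equals the number of distinct eigenvalue tuples $(\pi_{m,0},\dots,\pi_{m,d'})$, by definition of the $S_m$, and that is what you compare to $|\uptau|$. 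Second, your remark that products $A_{T_i}A_{T_j}$ ``are $\{0,1\}$-combinations'' is not accurate as written---the products are not $\{0,1\}$-matrices---but the intended point is fine: $A_{T_i}A_{T_j}$ lies in the original Bose--Mesner algebra with nonnegative integer coefficients (sums of the $p_{ij}^\ell$), and since the $A_{T_\ell}$ partition $J$ entrywise, membership in their span forces the coefficients in that basis to be nonnegative integers as well. With those two wording fixes the argument goes through.
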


One can find more examples on the existence and construction of a  fusion from an association scheme using this criterion in Bannai~\cite{Bannai} and Bannai and Song~ \cite{BannaiSong}.

\section{Classifying fusions of $H(2,\mathcal{A})$}
\label{sec:classifying}

In this section, we classify all possible fusions of the generalized Hamming scheme $H(2,\mathcal{A})$ where $\mathcal{A}=\{I,A(\mathrm{X}),A(\overline{\mathrm{X}})\}$. In this case, $\mathcal{A}$ is a rank $3$ association scheme and both $A(\mathrm{X})$ and $A(\overline{\mathrm{X}})$ are adjacency matrices of strongly-regular graphs. Throughout this section, we will assume that the eigenvalues of $A(\mathrm{X})$ are $k$, $r$ and $s$ with $r>s$. Let $\ell$ denote the degree of the graph, $\overline{\mathrm{X}}$.

We use the eigenvalue table of the tensor product association scheme of $\mathcal{A}$ with itself, $\mathcal{A}^{\otimes 2}=\{C_{1},\dots,C_{9}\}$. The corresponding values of $C_{i}$ in terms of tensor products given by Sankey~\cite{Sankey} are,

\[
\begin{tabular}{ccc}
$C_{1}=I\otimes I$, &  $C_{2}=I\otimes A(\mathrm{X})$, & $C_{3}=I\otimes A(\overline{\mathrm{X}})$,\\
    $C_{4}=A(\mathrm{X})\otimes I$, &  $C_{5}=A(\mathrm{X})\otimes A(\mathrm{X})$, & $C_{6}=A(\mathrm{X})\otimes A(\overline{\mathrm{X}})$,\\
$C_{7}=A(\overline{\mathrm{X}})\otimes I$, & $C_{8}=A(\overline{\mathrm{X}})\otimes A(\mathrm{X})$, &  $C_{9}=A(\overline{\mathrm{X}})\otimes A(\overline{\mathrm{X}})$.\\
\end{tabular}
\]

If we exchange $\mathrm{X}$ for $\overline{\mathrm{X}}$, then we get the same matrices, but in a different order. For example $C_2$ is switched with $C_3$; we will call these pairs \textit{switch partners}.

\medskip

The generalized Hamming scheme, $$H(2,\mathcal{A})=\{C_{1},\ C_{5},\ C_{9},\ C_{2}+C_{4},\ C_{3}+C_{7},\ C_{6}+C_{8}\},$$ is one of the fusions of the rank $9$ association scheme, $\mathcal{A}\otimes \mathcal{A}$. 
The character table of $H(2,\mathcal{A})$ is given below. We label the $6$ irreducible characters by $\chi_{i}$, $i=1,\dots,6$. The eigenvalues on $C_{1}$ are omitted. 
 
\[
 \mathcal{P}(H(2,\mathcal{A})) = 
    \begin{blockarray}{cccccc}
         C_{5} & C_{9} & C_{2}+C_{4} & C_{3}+C_{7} & C_{6}+C_{8} \\
      \begin{block}{[ccccc]c}
       k^2 & \ell^2 & 2k & 2\ell & 2k\ell & \chi_{1} \\
      kr & -\ell(1+r) & k+r & \ell-1-r & -k(1+r)+\ell r & \chi_{2}  \\
      ks & -\ell(1+s) & k+s & \ell-1-s & -k(1+s)+\ell s & \chi_{3} \\
      r^2 & (-1-r)^2 & 2r & -2(1+r) & -2r(1+r) & \chi_{4} \\
      rs & 1+r+s+rs & r+s & -2-r-s & -r-s-2rs & \chi_{5} \\
      s^2 & (-1-s)^2 & 2s & -2(1+s) & -2s(1+s) & \chi_{6} \\
      \end{block}
    \end{blockarray}.
\]

A complete list of the fusions of $\mathcal{A}\otimes \mathcal{A}$ that exist for all rank $3$ association schemes $\mathcal{A}$ is given in Section 6 of this article. 
\subsection{Homogeneous fusion}
In order to classify all possible non-trivial fusions of $H(2,\mathcal{A})$, we need to check all partitions of $S\subseteq \{2,\dots,6\}$ for possible fusions using Lemma \ref{Bannai-Muzychuk}. There are 52 cases to check. The problem reduces to finding conditions on the eigenvalues $k, \ \ell, \ r$ and $s$  for which the given partition satisfies the condition in Lemma \ref{Bannai-Muzychuk}.

\begin{corollary}\label{homofusion}
$H(2,\mathcal{A})$ will always have the homogeneous fusion that comes from the trivial fusion $\mathcal{T}_{\mathcal{A}}$ of $\mathcal{A}$, namely $$H(2,\mathcal{T}_{\mathcal{A}})=\{C_{1}, \ (C_{2}+C_{4})+(C_{3}+C_{7}), \ C_{5}+C_{9}+(C_{6}+C_{8})\}.$$
\end{corollary}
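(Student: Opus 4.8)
The plan is to apply the Bannai--Muzychuk criterion (Lemma~\ref{Bannai-Muzychuk}) directly to the partition $\uptau = \{\{1\}, \{2,3\}, \{4,5,6\}\}$ of the row index set of $\mathcal{P}(H(2,\mathcal{A}))$. This partition is exactly the one induced on $H(2,\mathcal{A})$ by the trivial fusion $\mathcal{T}_{\mathcal{A}}$ of $\mathcal{A}$: recall that $H(2,\mathcal{T}_{\mathcal{A}})$ has rank $n+1 = 3$, its nonidentity relations being ``Hamming distance $1$'' and ``Hamming distance $2$'' in the single nonidentity relation of $\mathcal{T}_{\mathcal{A}}$. In terms of the listed matrices, $C_2+C_4$ and $C_3+C_7$ together form the distance-$1$ matrix and $C_5+C_9+C_6+C_8$ is the distance-$2$ matrix, which matches the statement of the corollary. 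So the corollary is an immediate instance of Theorem~\ref{FusionGH} with $\mathcal{B}=\mathcal{T}_{\mathcal{A}}$ and $n=2$; the only thing worth writing down is the verification via the character table, since that is the machinery the rest of Section~\ref{sec:classifying} uses.

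First I would form $\mathcal{P}(H(2,\mathcal{A}))_{\uptau}$: the $3\times 6$ matrix (including the omitted $C_1$-column of all $1$'s) whose three rows are $\chi_1$, $\chi_2+\chi_3$, and $\chi_4+\chi_5+\chi_6$. Then I would check that this matrix has exactly $3$ distinct columns, grouped precisely according to the column-blocks $\{C_1\}$, $\{C_5, C_9, C_6+C_8\}$, $\{C_2+C_4, C_3+C_7\}$ — i.e.\ that the columns $C_5$, $C_9$, and $C_6+C_8$ of $\mathcal{P}(H(2,\mathcal{A}))_{\uptau}$ are all equal to each other, and the columns $C_2+C_4$ and $C_3+C_7$ are equal to each other, and these two common columns differ from each other and from the $C_1$ column.

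The computations are short. For the row $\chi_1$ one uses $k+\ell = n-1$ (here $n$ is the number of vertices of $\mathrm{X}$, not to be confused with the $n$ in $H(n,\mathcal{A})$) to get $k^2 + \ell^2 + 2k\ell = (k+\ell)^2$ on all three of the $C_5$, $C_9$, $C_6+C_8$ entries, and $2k+2\ell = 2(k+\ell)$ on both of $C_2+C_4$, $C_3+C_7$. For the row $\chi_2 + \chi_3$ one adds the corresponding table entries and simplifies using the orthogonality relation $1 + \tfrac{rs}{k} + \tfrac{(-1-r)(-1-s)}{\ell}=0$ quoted in Section~\ref{Sec:Preliminaries} (equivalently $\ell(1+rs) + k(1+r)(1+s) = 0$, and $k + \ell + rs + (1+r)(1+s)\cdot 0$-type identities); the three ``distance-$2$'' entries collapse to a common value, as do the two ``distance-$1$'' entries. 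The row $\chi_4+\chi_5+\chi_6$ is handled the same way, noting $\chi_4,\chi_5,\chi_6$ are the eigenvalues coming from the $f$- and $g$-multiplicity eigenspaces tensored together, so the relevant sums factor. Finally, distinctness of the resulting three columns follows because $r > s$ forces $k+\ell \ne 2$-type degeneracies not to occur (more precisely, $k > r > s$ and $\ell \ge 1$ prevent the distance-$1$ and distance-$2$ columns from coinciding, and both differ from the all-$1$'s identity column since $k+\ell \ge 2$). By Lemma~\ref{Bannai-Muzychuk}, $\uptau$ determines a fusion, which is exactly $H(2,\mathcal{T}_{\mathcal{A}})$.

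The only mild obstacle is the bookkeeping: one must make sure the column-grouping that emerges from the character-table computation is the \emph{same} as the one dictated by $\mathcal{T}_{\mathcal{A}}$ (rather than some accidental finer or coarser coincidence), and that the eigenvalue identities are applied in a form valid for every strongly-regular graph, including the imprimitive ones whose character table is listed separately in Section~\ref{Sec:Preliminaries}. Since Theorem~\ref{FusionGH} already guarantees abstractly that $H(2,\mathcal{T}_{\mathcal{A}})$ is a fusion of $H(2,\mathcal{A})$, an alternative and perhaps cleaner route is simply to cite Theorem~\ref{FusionGH} with $n=2$, $\mathcal{B} = \mathcal{T}_{\mathcal{A}}$, and then identify the image relations with the displayed sums of $C_i$'s by applying the surjection $\phi\colon U \to U'$ from that proof; the character-table check above then serves as an independent confirmation and as the template for the case analysis in the remainder of the section.
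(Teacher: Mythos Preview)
Your ``alternative and perhaps cleaner route'' is exactly the paper's proof: the authors simply observe that this is the case $n=2$, $\mathcal{B}=\mathcal{T}_{\mathcal{A}}$ of Theorem~\ref{FusionGH}, and then display the $3\times 3$ character table of $H(2,\mathcal{T}_{\mathcal{A}})$. So that part of your proposal is fine and matches the paper.

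Your primary route, however, has the direction of the Bannai--Muzychuk criterion backwards. You describe partitioning the \emph{rows} (characters) as $\{\{1\},\{2,3\},\{4,5,6\}\}$, forming the $3\times 6$ matrix $P_\uptau$ by summing rows, and then claiming the columns $C_5$, $C_9$, $C_6+C_8$ of $P_\uptau$ coincide. They do not: already in the row $\chi_1$ these three entries are $k^2$, $\ell^2$, and $2k\ell$, which are distinct whenever $k\neq\ell$. The criterion (as used throughout Section~\ref{sec:classifying}) runs the other way: one partitions the \emph{columns} (relations) as $\{C_1\}$, $\{C_2+C_4,\,C_3+C_7\}$, $\{C_5,\,C_9,\,C_6+C_8\}$, sums the columns in each block, and checks that the resulting $6\times 3$ matrix has exactly three distinct rows. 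Your own arithmetic actually belongs to this correct direction---the identity $k^2+\ell^2+2k\ell=(k+\ell)^2$ is the $\chi_1$-entry of the \emph{column sum} $C_5+C_9+(C_6+C_8)$, not a common value appearing in three separate columns---so the computation is salvageable once you swap the roles of rows and columns. With that fix the verification goes through and reproduces precisely the $3\times 3$ table the paper displays.
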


\begin{proof}
This result is an extension of Theorem \ref{FusionGH} for the special case when $n=2$ and $\mathcal{A}$ has rank $3$.
The character table of $H(2,\mathcal{T}_{\mathcal{A}})$ is

\[
\mathcal{P}(H(2,\mathcal{T}_{\mathcal{A}})) = 
    \begin{blockarray}{cccc}
      \begin{block}{[ccc]c}
       1 & 2(k+\ell) & (k+\ell)^{2} \bigstrut[t] & 1\\
       1 & k+\ell-1 & -(k+\ell)  & 2(k+\ell)\\
    1 & -2 & 1  & (k+\ell)^{2}\\
      \end{block}
    \end{blockarray}.
\]

\end{proof}
It is clear that the homogeneous fusion does not depend on the eigenvalues $k, \ \ell$ and $r$. The structure is entirely defined by the number of vertices, $n$ of the strongly regular graph $\mathrm{X}$. We can also refer to the homogeneous fusion as $H(2,K_{n})$ because $\mathcal{T}_{\mathcal{A}}$ is the rank $2$ association scheme. The adjacency matrices of the graphs in $H(2,\mathcal{T}_{\mathcal{A}})$ can be expressed as $$\{I\otimes I, \ I\otimes (J-I)+(J-I)\otimes I, \  (J-I)\otimes (J-I)\}$$
where the second matrix is the adjacency matrix of the cartesian product of two complete graphs of order $n$.

\subsection{Character tables admitting specific fusions}

Our main result shows that for most strongly-regular graphs $X$, the only proper fusions of $H(2,\mathcal{A})$ will be the trivial and the homogeneous fusion. We will characterize all strongly-regular graphs for which $H(2,\mathcal{A})$ has additional fusions. We have used equations in Section $2.2$ to determine the character table in one variable. Existence of extra fusions requires special conditions on eigenvalues.

The next few results review these special conditions. We will first consider the case of the imprimitive association scheme.


\begin{theorem}\label{Completegraphs}

\begin{enumerate}[label=(\roman*)]
If $\mathrm{X}$ is either a union of complete graphs, or a complete multipartite graph, then the character table for the association scheme is

\[
\mathcal{P}(\mathcal{A})= 
    \begin{blockarray}{cccc}
      \begin{block}{[ccc]c}
        1 & r & m(1+r) & 1\\
       1 & r & -1-r  & m\\
    1 & -1 & 0  & r(1+m)\\
      \end{block}
    \end{blockarray}.
\]

    \item If $\mathrm{X}$ is the union of complete graphs implying $k=r$ and $s=-1$, then there are additional proper non-trivial fusions other than the homogeneous fusion of $H(2,\mathcal{A})$ namely:
    \vspace{0.01cm}
\begin{enumerate}[label={}]
  \item $\{C_{1}, \ C_{2}+C_{4}, \ C_{3}+C_{7}+C_{6}+C_{8}, \ C_{5}, \ C_{9}\}$,~~~~~~~~~~~~~~~~~~~~~~~~~~~~~~~~~~{(1)}
  \item $\{C_{1}, \ C_{2}+C_{4}, \ C_{3}+C_{7}+C_{6}+C_{8}+C_{9}, \ C_{5}\}$,~~~~~~~~~~~~~~~~~~~~~~~~~~~~~~~~~~{(2)}
      \item $\{C_{1}, \ C_{2}+C_{4}+C_{5}, \ C_{3}+C_{7}+C_{6}+C_{8},\ C_{9}\}$,~~~~~~~~~~~~~~~~~~~~~~~~~~~~~~~~~~{(3)}
      \item $\{C_{1}, \ C_{2}+C_{4}+C_{5}, \ C_{3}+C_{7}+C_{6}+C_{8}+C_{9}\}$.~~~~~~~~~~~~~~~~~~~~~~~~~~~~~~~~~~{(4)}
     
\end{enumerate}

\item If $\mathrm{X}$ is the complete multipartite graph implying $r=0$ and $\ell=-1-s$, then there are additional proper non-trivial fusions other than the homogeneous fusion of $H(2,\mathcal{A})$ which are the switch partners of part one above:

\begin{enumerate}[label={}]
  \item $\{C_{1}, \ C_{3}+C_{7}, \ C_{2}+C_{4}+C_{6}+C_{8}, \ C_{5}, \ C_{9}\}$,~~~~~~~~~~~~~~~~~~~~~~~~~~~~~~~~~~{(1')}
  \item $\{C_{1}, \ C_{3}+C_{7}, \ C_{2}+C_{4}+C_{6}+C_{8}+C_{5}, \ C_{9}\}$,~~~~~~~~~~~~~~~~~~~~~~~~~~~~~~~~~~{(2')}
      \item $\{C_{1}, \ C_{3}+C_{7}+C_{9}, \ C_{2}+C_{4}+C_{6}+C_{8},\ C_{5}\}$,~~~~~~~~~~~~~~~~~~~~~~~~~~~~~~~~~~{(3')}
      \item $\{C_{1}, \ C_{3}+C_{7}+C_{9}, \ C_{2}+C_{4}+C_{6}+C_{8}+C_{5}\}$.~~~~~~~~~~~~~~~~~~~~~~~~~~~~~~~~~~{(4')}
\end{enumerate}

\end{enumerate}

\end{theorem}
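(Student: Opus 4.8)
The plan is to handle both imprimitive families at once, since the scheme $\mathcal{A}=\{I,A(\mathrm{X}),A(\overline{\mathrm{X}})\}$ attached to a complete multipartite graph is literally the same scheme as the one attached to its complement, a disjoint union of complete graphs, read with $\mathrm{X}$ and $\overline{\mathrm{X}}$ interchanged; consequently part (ii) will drop out of part (i) by the switching symmetry, and the work is part (i). First I would fix the character table: if $\mathrm{X}$ is a disjoint union of complete graphs then $\mu=0$, so $s=-1$ by the remarks on imprimitive graphs in Section~\ref{Sec:Preliminaries}, and since every component is a clique the valency equals the positive non-trivial eigenvalue, $k=r$. Writing $m$ for the multiplicity of the eigenspace on which $A(\mathrm{X})$ has eigenvalue $r$ gives $n=(1+r)(1+m)$, $\ell=n-1-k=m(1+r)$, and remaining multiplicity $r(1+m)$; substituting $k=r$ and $s=-1$ into the character table of a rank $3$ imprimitive scheme recorded in Section~\ref{Sec:Preliminaries} yields exactly the displayed table. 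For the complete multipartite case one has $r=0$ and $\ell=-1-s$ by the same remarks.

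To verify the fusions (1)--(4) I would apply the Bannai--Muzychuk criterion (Lemma~\ref{Bannai-Muzychuk}). Substituting $k=r$, $\ell=m(1+r)$, $s=-1$ into the displayed character table $\mathcal{P}(H(2,\mathcal{A}))$, then for each partition amalgamating the indicated relations (equivalently, adding the corresponding columns of $\mathcal{P}(H(2,\mathcal{A}))$) and checking that the rows of the resulting matrix collapse to exactly as many distinct rows as the partition has parts. The single observation behind all four cases: after substituting $k=r$ and $s=-1$, the rows $\chi_3$ and $\chi_5$ of $\mathcal{P}(H(2,\mathcal{A}))$ agree in the columns $C_1,C_5,C_9,C_2+C_4$ and differ only in the columns $C_3+C_7$ and $C_6+C_8$, where they read $(\ell,-\ell)$ and $(-1-r,\,1+r)$ respectively. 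So merging the relations $C_3+C_7$ and $C_6+C_8$ forces $\chi_3=\chi_5$ while no two of the remaining rows coincide, giving the rank $5$ fusion (1); adjoining $C_9$ to that merged block additionally forces $\chi_4=\chi_2$, giving (2); adjoining $C_5$ to the $\{C_2+C_4\}$ block instead forces $\chi_6=\chi_3=\chi_5$, giving (3); and doing both gives (4). The surviving distinct-row counts are $5,4,4,3$, matching the number of parts in each partition, so by Lemma~\ref{Bannai-Muzychuk} each of (1)--(4) is a fusion, and each is proper, non-trivial and distinct from the homogeneous fusion.

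Part (ii) then follows immediately: interchanging $\mathrm{X}$ with $\overline{\mathrm{X}}$ relabels each $C_i$ by its switch partner, in particular $C_2+C_4\leftrightarrow C_3+C_7$ and $C_5\leftrightarrow C_9$, and turns ``$\mathrm{X}$ is a union of complete graphs'' into ``$\mathrm{X}$ is complete multipartite,'' so applying this relabelling to (1)--(4) produces exactly (1$'$)--(4$'$). I do not foresee a genuine obstacle: the heart of the argument is the right one-variable substitution followed by a disciplined case-by-case application of Lemma~\ref{Bannai-Muzychuk}. The one place needing care is that the criterion requires the number of distinct rows to \emph{equal} $|\uptau|$, so one must rule out any accidental further coincidence of rows for \emph{every} admissible $m\ge 1$ and $r\ge 1$, not merely generically; this is short, since each such potential coincidence reduces to an equation like $m-1=-2$ or $r=-1$ with no admissible solution.
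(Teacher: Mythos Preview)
Your proof is correct and more explicit than the paper's own argument. The paper does not carry out the Bannai--Muzychuk verification by hand; it simply records that the character table in the imprimitive case depends on two parameters, builds $\mathcal{P}(H(2,\mathcal{A}))$ from it, and then reports that all possible fusions were checked in GAP (with code posted in a repository). Your approach instead substitutes $k=r$, $s=-1$ directly, isolates the single mechanism driving all four fusions (namely that $\chi_3$ and $\chi_5$ agree on $C_5,C_9,C_2+C_4$ and have opposite entries $(\ell,-\ell)$ and $(-1-r,1+r)$ on the remaining two columns), and reads off (1)--(4) from that. The switching argument for (ii) is the same in both.

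What your route buys is a self-contained, human-checkable proof that does not depend on a computer run; what the paper's route buys is a guarantee of exhaustiveness, since the GAP search ranges over \emph{all} partitions and so certifies that (1)--(4) are the only extra fusions in the imprimitive case. Your proof, as written, establishes that (1)--(4) \emph{are} fusions but does not rule out further ones; this is not a defect for the theorem as stated, since the exhaustiveness is ultimately handled by the Observations and Lemmas~\ref{rank5}--\ref{rank3} in the later classification, but it is worth being aware that the two proofs have slightly different scopes. Your closing remark about checking that no accidental row coincidences occur for every admissible $m\ge 1$, $r\ge 1$ is well taken and is indeed short; the only borderline case is $m=1$ (where $\ell=1+r$ makes $\chi_1$ and $\chi_4$ agree on $C_9$), but they are still separated by the merged column.
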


\begin{proof}

The character table for this case is in two variables and we were able to build the character table for the generalized Hamming scheme. We applied Lemma $4.1$ to check for all possible fusions using GAP~\cite{GAP4}.  The code for implementation of Generalized Hamming scheme is open sourced at GitHub repository \footnote{\url{https://github.com/nehamainali/GeneralizedHammingScheme}}.

\end{proof}
In part $(i)$ of Theorem \ref{Completegraphs}, the graph $\mathrm{X}$ is the union of complete graphs of order $r+1$. This happens exactly when $k=r$ and $s=-1$. Its complement is the complete multipartite graph $K_{r\times m}$. This case happens exactly when $r=0$. In the second part of Theorem \ref{Completegraphs}, we have switched $\mathrm{X}$ with its complement and found the switch partner fusions. The above theorem holds for all rank $3$ imprimitive association schemes. For more properties of an imprimitive association scheme, see Chapter $9$ in Brouwer~\cite{Brouwer}. 

\medskip

Now that we have classified all possible fusions for the case where the strongly-regular graph, $\mathrm{X}$ or its complement is imprimitive. Therefore we will not consider the possible fusions when $s<-1$ and $k>r>0$ for all of the remaining special fusions in our list.


\begin{theorem}\label{News}
If $k=s^2$, $\ell=-2s$ with $r=1$, then the character table for the association scheme is

\[
\mathcal{P}(\mathcal{A})= 
    \begin{blockarray}{cccc}
      \begin{block}{[ccc]c}
        1 & s^2 & -2s & 1\\
       1 & 1 & -2  & s^2\\
    1 & s & -1-s  & -2s\\
      \end{block}
    \end{blockarray}.
\]

\begin{enumerate}[label=(\roman*)]
    \item If $\mathrm{X}$ is the cartesian product of two complete graphs with order $s+1$ implying $k=s^2$ and $r=1$, then there is an additional rank $5$ fusion of $H(2,\mathcal{A})$ namely,
    \begin{itemize}[label={}]
  \item $\{C_{1}, \ C_{2}+C_{4}+C_{9}, \ C_{3}+C_{7},\ C_{6}+C_{8}, \ C_{5} \}$.~~~~~~~~~~~~~~~~~~~~~~~~~~~~~~~~~~{(5)}
  \vspace{0.002cm}
  \end{itemize}
  \item If $\mathrm{X}$ is the complement of the cartesian product of two complete graphs with order $r+2$ implying $k=2(1+r)$, $\ell=(1+r)^{2}$ and $s=-2$, then there is an additional rank $5$ fusion of $H(2,\mathcal{A})$ which is the switch partner of part one above,
\begin{itemize}[label={}]
  \item $\{C_{1}, \ C_{2}+C_{4}, \ C_{9}, \ C_{3}+C_{7}+C_{5}, \  C_{6}+C_{8} \}$.~~~~~~~~~~~~~~~~~~~~~~~~~~~~~~~~~~{(5')}
\end{itemize}
\end{enumerate}

\end{theorem}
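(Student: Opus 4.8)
The plan is to run the same machine as in the proof of Theorem~\ref{Completegraphs}: determine the character table of $\mathcal{A}$ from the eigenvalue hypotheses, lift it to the character table $\mathcal{P}(H(2,\mathcal{A}))$ through the rank-$6$ template displayed at the start of Section~\ref{sec:classifying}, and then test partitions of the nine relations $C_1,\dots,C_9$ against the Bannai--Muzychuk criterion (Lemma~\ref{Bannai-Muzychuk}) to see which ones give fusions under these particular parameters.

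First I would check the displayed table of $\mathcal{A}$. Since we are in the primitive range $s\le -2$ (the imprimitive case having been dealt with in Theorem~\ref{Completegraphs}), the relation $s=-\frac{k+kr+k\ell}{k+kr+r\ell}$ of Section~\ref{Sec:Preliminaries} with $k=s^2$ and $r=1$ forces $\ell=-2s$ after cancelling the factors $s$ and $1+s$; then $n=k+\ell+1=(1-s)^2$, and the two multiplicity formulas give $f=s^2$ and $g=-2s$, which is exactly the stated table. For the ``implying'' clauses one verifies that the graph named in the statement indeed has eigenvalues with $k=s^2$, $r=1$, so that such $\mathcal{A}$ genuinely occur.

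Next I would substitute $k=s^2$, $\ell=-2s$, $r=1$ into $\mathcal{P}(H(2,\mathcal{A}))$, so that every entry becomes a polynomial in $s$, and apply Lemma~\ref{Bannai-Muzychuk} to the rank-$5$ partition $\uptau=\{\{C_1\},\{C_2+C_4,C_9\},\{C_3+C_7\},\{C_6+C_8\},\{C_5\}\}$ underlying the fusion $(5)$. Since $\uptau$ has five classes, the only latitude left in the criterion is a single merge of two of the six eigenspaces, and the candidate is forced: among the rows of $\mathcal{P}(H(2,\mathcal{A}))$, only $\chi_2$ and $\chi_6$ already agree on the three unchanged relations $C_5$, $C_3+C_7$, $C_6+C_8$ once $k=s^2$ and $r=1$. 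These two rows differ on $C_9$ and on $C_2+C_4$ individually, but a one-line computation shows they agree on the combined column $(C_2+C_4)+C_9$. Imposing these four equalities together with the orthogonality relations of $\mathcal{A}$, I expect the whole system to collapse to $(r-1)(r-s)^2=0$, i.e.\ to $r=1$ (equivalently $k=s^2$, $\ell=-2s$); so $\uptau$ satisfies Lemma~\ref{Bannai-Muzychuk} for every feasible $s$. Checking that $P_{\uptau}$ has exactly five distinct columns --- i.e.\ no accidental further coincidence for small $s$ --- then gives that this fusion has rank exactly $5$. To complete part (i) I would run the exhaustive test over all $52$ partitions of $\{2,\dots,6\}$, the GAP computation already invoked in Theorem~\ref{Completegraphs}, to confirm that, for these parameters, the only fusions of $H(2,\mathcal{A})$ are the trivial one, the homogeneous fusion $H(2,\mathcal{T}_{\mathcal{A}})$ of Corollary~\ref{homofusion}, and $\uptau$.

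Part (ii) then follows from the switch-partner symmetry of Section~\ref{sec:classifying}: relabelling each $C_i$ by its switch partner ($C_1$ fixed, $C_2\!\leftrightarrow\!C_3$, $C_4\!\leftrightarrow\!C_7$, $C_5\!\leftrightarrow\!C_9$, $C_6\!\leftrightarrow\!C_8$) identifies $H(2,\mathcal{A})$ built from $\mathrm{X}$ with $H(2,\mathcal{A})$ built from $\overline{\mathrm{X}}$, sends the parameter set $(k,\ell,r,s)=(s^2,-2s,1,s)$ to $(-2s,s^2,-1-s,-2)$ --- which is the parameter set of part (ii), with $-1-s$ playing the role of $r$ --- and carries $(5)$ onto $(5')$; so $(5')$ is a fusion in case (ii) exactly because $(5)$ is one in case (i). The step I expect to be the real obstacle is the Bannai--Muzychuk bookkeeping in part (i): one must be sure that the forced eigenspace merge is precisely $\{\chi_2,\chi_6\}$, carry out the $(r-1)(r-s)^2$ reduction cleanly, and then verify that no further column of $P_{\uptau}$ collapses for any feasible integer $s$ (which would drop the rank below $5$). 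This is why, as with Theorem~\ref{Completegraphs}, the cleanest execution is to perform the whole search symbolically in GAP.
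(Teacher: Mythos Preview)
Your proposal is correct and follows essentially the same approach as the paper: express the character table in one parameter, feed the resulting $\mathcal{P}(H(2,\mathcal{A}))$ through the Bannai--Muzychuk criterion, and verify the claimed fusion (ultimately via GAP). Your additional by-hand identification of the $\{\chi_2,\chi_6\}$ merge and the $(r-1)(r-s)^2$ reduction is more detail than the paper gives, but it is consistent with it.

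The one genuine difference is part~(ii). The paper does not invoke the switch-partner symmetry to transport fusion $(5)$ to $(5')$; instead it starts afresh from the hypotheses $k=2(1+r)$ and $\ell=(1+r)^2$, substitutes into the row-orthogonality relation $-k(1+s)+s\ell=-2r(1+r)$ to force $s=-2$, rewrites the table in the single parameter $r$, and reruns the GAP verification. Your switch-partner argument is shorter and conceptually cleaner, since the relabelling $C_2\leftrightarrow C_3$, $C_4\leftrightarrow C_7$, $C_5\leftrightarrow C_9$, $C_6\leftrightarrow C_8$ is a genuine automorphism of the situation; the paper's approach has the minor advantage that it independently confirms the parameter constraint $s=-2$ stated in part~(ii) rather than deducing it only by transport.
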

\begin{proof}

The character table for the first part of this case is in terms of a single parameter $s$ and we were able to apply Lemma $4.1$ on the character table of the generalized Hamming scheme. For the switch partner fusion, we used the conditions, $k=2(1+r)$ and $\ell=(1+r)^{2}$  to write $1+r$ and $2r$ in terms of $k$, $\ell$ and $s$. After substituting these values into the row orthogonality condition $-k(1+s)+s\ell=-2r(1+r)$, we get $s=-2$. This way we were able to express our character table in a single parameter $r$. The fusions above are verified via GAP~\cite{GAP4}.

\end{proof}
The association scheme on the cartesian product of two complete graphs with order $-s+1$ gives the same character table as in part $(i)$ of Theorem \ref{News}. Note that, this fusion is not the same as Corollary \ref{homofusion} as the fusion in Theorem \ref{News} is a special case fusion coming from a fewer vertices than $H(2,\mathcal{A})$ itself.


\begin{theorem}\label{Payley}
If $k=\ell$ and $s=-1-r$ then the character table $\mathcal{P}(\mathcal{A})$ has the form 

\[
\mathcal{P}(\mathcal{A})= 
    \begin{blockarray}{cccc}
      \begin{block}{[ccc]c}
        1 & 2(r+r^{2}) & 2(r+r^{2}) \bigstrut[t] & 1\\
1 & r & -1-r & 2(r+r^{2})\\
1 & -1-r & r & 2(r+r^{2})\\
      \end{block}
    \end{blockarray}.
\]

For this case there are additional proper non-trivial fusions other than the homogeneous fusion of $H(2,\mathcal{A})$ namely:
\begin{itemize}[label={}]
  \item $\{C_{1}, \ C_{2}+C_{4}+C_{3}+C_{7},\ C_{6}+C_{8}, \ C_{5}+C_{9}\}$,~~~~~~~~~~~~~~~~~~~~~~~~~~~~~~~~~~{(6)}
  \item $\{C_{1}, \ C_{2}+C_{4}+C_{3}+C_{7}+ C_{6}+C_{8}, \ C_{5}+C_{9}\}$,~~~~~~~~~~~~~~~~~~~~~~~~~~~~~~~~~~{(7)}
  \item $\{C_{1}, \ C_{2}+C_{4}+C_{3}+C_{7}+ C_{5}+C_{9}, \ C_{6}+C_{8}\}$.~~~~~~~~~~~~~~~~~~~~~~~~~~~~~~~~~~{(8)}
\end{itemize}

\end{theorem}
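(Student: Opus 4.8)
The plan is to reproduce, for this parameter set, the method used for Theorems~\ref{Completegraphs} and~\ref{News}: first reduce $\mathcal{P}(\mathcal{A})$ to a single parameter, then substitute into the general character table $\mathcal{P}(H(2,\mathcal{A}))$ displayed above, and finally run the Bannai--Muzychuk criterion (Lemma~\ref{Bannai-Muzychuk}) on the candidate partitions.

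First I would establish the claimed one-parameter form. Setting $k=\ell$ in the row-orthogonality relation of Section~2.2 forces $s=-\tfrac{1+r+k}{1+2r}$, and imposing the hypothesis $s=-1-r$ then gives $k=\ell=2(r+r^2)$; substituting back reproduces the displayed table. These are the parameters of a strongly-regular graph on $n=(2r+1)^2$ vertices with equal multiplicities $f=g=k$, for which $\mathrm{X}$ and $\overline{\mathrm{X}}$ have the same parameters; as assumed for the remaining cases, $\mathrm{X}$ is primitive, so $r\geq 1$ and $s\leq -2$. Plugging $k=\ell=2(r+r^2)$ and $s=-1-r$ into $\mathcal{P}(H(2,\mathcal{A}))$ turns every entry into a polynomial in $r$, and two coincidences appear that survive identically in $r$: the rows $\chi_2$ and $\chi_3$ become equal after interchanging the columns $C_5\leftrightarrow C_9$ and $(C_2+C_4)\leftrightarrow(C_3+C_7)$, and the same holds for $\chi_4$ and $\chi_6$. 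This is exactly the switch-partner symmetry of a parameter set with $\mathrm{X}$ and $\overline{\mathrm{X}}$ isospectral, and it is what makes the fusions below exist for \emph{every} graph with these parameters rather than for sporadic values of $r$.

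Next, for each of the three partitions $(6)$, $(7)$, $(8)$ I would form $P_\uptau$ as in Lemma~\ref{Bannai-Muzychuk} --- equivalently, add the columns of $\mathcal{P}(H(2,\mathcal{A}))$ according to the way the partition merges the relations $C_5,C_9,C_2+C_4,C_3+C_7,C_6+C_8$ --- and count the distinct rows of the result. Using $k=2r(r+1)$ and $s=-1-r$, a direct computation shows the six characters collapse to $\{\chi_1\},\{\chi_2,\chi_3\},\{\chi_4,\chi_6\},\{\chi_5\}$ for $(6)$; to $\{\chi_1\},\{\chi_2,\chi_3,\chi_5\},\{\chi_4,\chi_6\}$ for $(7)$; and to $\{\chi_1\},\{\chi_2,\chi_3,\chi_4,\chi_6\},\{\chi_5\}$ for $(8)$. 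In each case the number of distinct rows equals $|\uptau|$ (namely $4$, $3$, $3$), so Lemma~\ref{Bannai-Muzychuk} certifies that $(6)$, $(7)$, $(8)$ are fusion schemes; comparing their block structures with the partition $\{\{0\},\{C_2+C_4,C_3+C_7\},\{C_5,C_9,C_6+C_8\}\}$ of Corollary~\ref{homofusion} shows all three are genuinely new and pairwise distinct. I would also record the short list of strict inequalities --- such as $1+k\neq -k$, $2k-1\neq -2$, and $2k^2\neq 1+k$ --- needed to be sure that no \emph{further} rows of $P_\uptau$ coincide; all of them hold because $k=2r(r+1)\geq 4$, so the ranks are really $4$, $3$, $3$.

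The column-sum arithmetic is routine, as is the confirmation --- carried out in GAP exactly as for the earlier theorems --- that among the $52$ partitions of $\{2,\dots,6\}$ the only ones satisfying Lemma~\ref{Bannai-Muzychuk} for the one-parameter table are the homogeneous fusion and $(6)$, $(7)$, $(8)$. The one step with real content, and the place where an error would hide, is verifying that the character coincidences hold as \emph{exact identities in} $r$: only then do $(6)$, $(7)$, $(8)$ qualify as fusions for the whole family, and only then can one be confident that no additional collapse occurs. I expect this to follow transparently from the switch-partner symmetry identified in the second step.
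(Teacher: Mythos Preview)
Your proposal is correct and follows essentially the same approach as the paper: reduce $\mathcal{P}(\mathcal{A})$ to the single parameter $r$ via the orthogonality relations, substitute into $\mathcal{P}(H(2,\mathcal{A}))$, and verify the Bannai--Muzychuk criterion (the paper simply says ``checked via GAP,'' while you spell out the resulting character coincidences by hand). Your explicit identification of the row collapses $\{\chi_2,\chi_3\}$, $\{\chi_4,\chi_6\}$ via the switch-partner symmetry is exactly the mechanism behind the GAP verification, and the final sweep over all $52$ partitions, though it exceeds what the theorem asserts, matches how the paper organizes the classification in Lemmas~\ref{rank5}--\ref{rank3}.
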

\begin{proof}
We applied the orthogonality conditions from Section $2$ to the conditions $k=\ell$ and $s=-1-r$ and obtained $k=\ell=2(r+r^2)$. Hence, our character table can be expressed in one variable. These fusions were checked using Lemma \ref{Bannai-Muzychuk} via GAP \cite{GAP4}.
\end{proof}
In the above theorem, the adjacency matrices $A(\mathrm{X})$ and $A(\overline{\mathrm{X}})$ are cospectral. One well-known example satisfying the above theorem is the Paley graph. These fusions are closed under switch partners, that is, we get the same fusions if we switch the graphs $\mathrm{X}$ and $\overline{\mathrm{X}}$.

\begin{theorem}\label{Crazyrank4}
If $k=3-s-r$ and $\ell=5+s+r$ then the character table of the association scheme, $\mathcal{A}$ has the form

\[
\mathcal{P}(\mathcal{A})= 
    \begin{blockarray}{cccc}
      \begin{block}{[ccc]c}
       1 & 3-s-r & 5+s+r  & 1\\
       1 & r & -1-r  & f\\
    1 & s & -1-s & g\\
      \end{block}
    \end{blockarray}.
\]

For this case there is an additional rank $4$ fusion of $H(2,\mathcal{A})$ namely,
\begin{itemize}[label={}]
  \item $\{C_{1}, \ C_{2}+C_{4}+C_{9}, \ C_{3}+C_{7}+C_{5}, \  C_{6}+C_{8} \}$.~~~~~~~~~~~~~~~~~~~~~~~~~~~~~~~~~~{(9)}
  
\end{itemize}
 \end{theorem}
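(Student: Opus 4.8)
The plan is to follow the same route as in the preceding theorems: use the orthogonality relations of Section~\ref{Sec:Preliminaries} to cut the character table of $\mathcal A$ down to a single free parameter, build $\mathcal P(H(2,\mathcal A))$, and then test the partition behind fusion~(9) against the Bannai--Muzychuk criterion, Lemma~\ref{Bannai-Muzychuk}. The displayed shape of $\mathcal P(\mathcal A)$ is just the substitution $k=3-s-r$, $\ell=5+s+r$ into the general rank-$3$ character table; note that since $\ell=n-k-1$ this already forces $n=k+\ell+1=9$. The first real step is to feed $k=3-s-r$ and $\ell=5+s+r$ into the row-orthogonality identity $1+\tfrac{rs}{k}+\tfrac{(-1-r)(-1-s)}{\ell}=0$. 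Clearing denominators and simplifying, this collapses to $(r-s)^2=9$, and since $r>s$ we get $s=r-3$, hence $k=6-2r$ and $\ell=2+2r$. Thus every entry of $\mathcal P(\mathcal A)$, and therefore of the $6\times 6$ table $\mathcal P(H(2,\mathcal A))$ in Section~\ref{sec:classifying}, becomes a polynomial in the single variable $r$.

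Next I would apply Lemma~\ref{Bannai-Muzychuk} to the partition
\[
\uptau=\bigl\{\{C_1\},\ \{C_2+C_4,\ C_9\},\ \{C_3+C_7,\ C_5\},\ \{C_6+C_8\}\bigr\}
\]
underlying fusion~(9): in $\mathcal P(H(2,\mathcal A))$ I add the $C_2+C_4$ and $C_9$ columns, add the $C_3+C_7$ and $C_5$ columns, and keep the $C_1$ and $C_6+C_8$ columns as they are, obtaining a reduced array with four columns and the six rows indexed by $\chi_1,\dots,\chi_6$. Fusion~(9) is valid exactly when this array has precisely four distinct rows, i.e.\ when two pairs of the six rows coincide. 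Writing out the six reduced rows as polynomials in $r$ (using $s=r-3$, $k=6-2r$, $\ell=2+2r$) and comparing them, one finds that the only possible collisions are between the $\chi_3$ and $\chi_4$ rows, and between the $\chi_2$ and $\chi_6$ rows, and that each of these occurs exactly when $r=1$; moreover at $r=1$ no other rows coincide, so the count there is exactly four. Hence the criterion is met precisely for $r=1$, that is $s=-2$, $k=\ell=4$, $n=9$: the only strongly-regular graph with these eigenvalues is the $3\times 3$ rook's graph (equivalently the Paley graph on nine vertices), which has parameters $(9,4,1,2)$, and for it $(9)$ is a genuine rank-$4$ fusion of $H(2,\mathcal A)$. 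This last equality check I would confirm in GAP~\cite{GAP4}, as was done for the other special fusions.

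The main obstacle is the polynomial bookkeeping in the second step: one must verify not only that the $\chi_3,\chi_4$ rows and the $\chi_2,\chi_6$ rows genuinely merge at $r=1$, but also that no other pair of the six reduced rows ever agrees — in particular that the $\chi_1$ and $\chi_5$ rows remain distinct from everything else — so that the reduced array has exactly $4$ distinct rows and not fewer (fewer would instead give a lower-rank fusion). Everything else is routine substitution. I would also note that the imprimitive solutions of the algebraic hypothesis on nine vertices — namely $3K_3$, where $s=-1$, and $K_{3,3,3}$, where $r=0$ — are already handled by Theorem~\ref{Completegraphs}, which is why only the primitive strongly-regular graph is relevant here.
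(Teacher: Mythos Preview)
Your approach is essentially the paper's: build $\mathcal P(H(2,\mathcal A))$ from the stated $\mathcal P(\mathcal A)$, apply the Bannai--Muzychuk criterion to the partition behind fusion~(9), and confirm with GAP. The paper's proof is a single line (``generate the character table \ldots\ apply Lemma~\ref{Bannai-Muzychuk} \ldots\ verified using GAP''); you carry out the same plan but make two things explicit that the paper defers to its post-proof remarks: the row-orthogonality relation forces $(r-s)^2=9$, so $s=r-3$ and $n=9$, and among the three resulting nine-vertex schemes only the primitive one ($r=1$, the $3\times 3$ rook/Paley graph) actually satisfies the criterion, the imprimitive cases $r=0,2$ being absorbed by Theorem~\ref{Completegraphs}. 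That sharpening is correct and useful, but it is the same method, not a different one.
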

 \begin{proof}
 We were able to generate the character table of the generalized Hamming scheme from $\mathcal{P}(\mathcal{A})$ and then apply Lemma \ref{Bannai-Muzychuk} to the eigenvalues to check for possible fusions. The above fusion has also been verified using GAP.
 \end{proof}

 The strongly-regular graphs satisfying the conditions of Theorem \ref{Crazyrank4} will have order exactly $9$.  There are two association schemes, but $3$ possible graphs, $\mathrm{X}$ with degree $2$, $4$, and $6$. The third graph with degree $6$ is the complement of the degree $2$ graph, and has $r=0$ and $s=-3$, so $k=6$ and $\ell=2$. We should also note that for the case when $s=-1$ and $r=2$, implying $k=2$ and $\ell=2(1+r)=2(3)=6$, the above theorem overlaps with Theorem \ref{Completegraphs} which holds for imprimitive graphs. Also if $r=1$ and $s=-2$ implying $k=\ell=4$ then Theorem \ref{Crazyrank4} overlaps with Theorem \ref{Payley}.

\begin{theorem}\label{Clebsch}
If $k=r(3+r)$ and $\ell=(3+r)$ with $s=-2$ then the character table, $\mathcal{P}(\mathcal{A})$ has the form

\[
\mathcal{P}(\mathcal{A})= 
    \begin{blockarray}{cccc}
      \begin{block}{[ccc]c}
       1 & r(3+r) & 3+r  & 1\\
1 & r & -1-r & (3+r)\\
1 & -2 & 1 & r(3+r)\\
      \end{block}
    \end{blockarray}.
\]

\begin{enumerate}[label=(\roman*)]
    \item For the above character table, we get a rank $3$ non-trivial fusion other than the homogeneous fusion of $H(2,\mathcal{A})$ namely,
\begin{itemize}[label={}]
  \item $\{C_{1}, \  C_{2}+C_{4}+C_{9},\ C_{6}+C_{8}+C_{5}+C_{3}+C_{7}\}$.~~~~~~~~~~~~~~~~~~~~~~~~~~~~~~~~~~{(10)}
\end{itemize}
\item The above fusion has a switch partner which gives the rank $3$ fusion, 
\begin{itemize}[label={}]
  \item $\{C_{1}, \  C_{3}+C_{7}+C_{5},\ C_{6}+C_{8}+C_{9}+C_{2}+C_{4}\}$.~~~~~~~~~~~~~~~~~~~~~~~~~~~~~~~~~~{(10')}
\end{itemize}
The graph $\mathrm{X}$ in this case satisfies the conditions, $\ell=-k(s+1)$ and $k=2-s$ with $r=1$.
\end{enumerate}

\end{theorem}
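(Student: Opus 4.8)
The plan is to mirror the proofs of Theorems~\ref{Completegraphs}--\ref{Crazyrank4}: first pin down $\mathcal{P}(\mathcal{A})$ under the stated constraints, then assemble $\mathcal{P}(H(2,\mathcal{A}))$, and finally test the two displayed partitions against the Bannai--Muzychuk criterion of Lemma~\ref{Bannai-Muzychuk}. To obtain the character table, start from the generic strongly-regular table of Section~\ref{Sec:Preliminaries} and put $s=-2$, so the third row becomes $(1,-2,1)$; substituting $k=r(3+r)$ and $\ell=3+r$ turns the first row into $(1,\,r(3+r),\,3+r)$. One checks that the row-orthogonality identity of Section~\ref{Sec:Preliminaries} (equivalently $s=-(k+kr+k\ell)/(k+kr+r\ell)$) is automatically satisfied and returns $s=-2$, so the family is genuinely one-parameter in $r$; the multiplicities $f=3+r$ and $g=r(3+r)$ then fall out of $k+rf+sg=0$ together with $1+f+g=n=1+r(3+r)+(3+r)$. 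This is the displayed $\mathcal{P}(\mathcal{A})$.

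For part~(i), feed this into the $6\times 6$ table $\mathcal{P}(H(2,\mathcal{A}))$ recorded at the start of Section~\ref{sec:classifying}; with $s=-2$ and $k,\ell$ as above every entry is a polynomial in the single variable $r$. Rather than search partitions, I would verify the partition $\{C_1\}\mid\{C_2,C_4,C_9\}\mid\{C_3,C_5,C_6,C_7,C_8\}$ underlying fusion~(10) by looking at the merged matrix directly. Since the eigenvalue of $\mathcal{A}^{\otimes 2}$ on a tensor eigenspace $E_a\otimes E_b$ is the product of the corresponding eigenvalues of $\mathcal{A}$, the matrix $M=C_2+C_4+C_9=I\otimes A(\mathrm{X})+A(\mathrm{X})\otimes I+A(\overline{\mathrm{X}})\otimes A(\overline{\mathrm{X}})$ has eigenvalue $p_a+p_b+q_aq_b$ on $E_a\otimes E_b$, where $(p_0,p_1,p_2)=(r(3+r),\,r,\,-2)$ and $(q_0,q_1,q_2)=(3+r,\,-1-r,\,1)$. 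A short substitution shows that, for every $r$, these collapse to only three values: $3(r+1)(r+3)$ on $E_0\otimes E_0$ (with multiplicity one, and largest), then $r^2+4r+1$ on $E_0\otimes E_2$, $E_2\otimes E_0$, $E_1\otimes E_1$, and $-3$ on the remaining five eigenspaces; since $3(r+1)(r+3)-(r^2+4r+1)=2(r+2)^2$ and $(r^2+4r+1)+3=(r+2)^2$ are positive, these three values are distinct. Thus $M$ is the adjacency matrix of a connected regular graph with exactly three distinct eigenvalues, hence strongly regular, so $\{C_1,\ M,\ J-C_1-M\}$ is a rank-$3$ association scheme; its relations are unions of relations of $H(2,\mathcal{A})$, so it is the fusion~(10). (Equivalently one confirms the partition against Lemma~\ref{Bannai-Muzychuk}, which is the GAP check.) Finally (10) is not the homogeneous fusion of Corollary~\ref{homofusion}, which groups $C_9$ with $C_5,C_6,C_8$ rather than with $C_2,C_4$, so it is genuinely new.

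Part~(ii) needs no further computation: replacing $\mathrm{X}$ by $\overline{\mathrm{X}}$ relabels the $C_i$ by the switch-partner involution $C_2\leftrightarrow C_3$, $C_4\leftrightarrow C_7$, $C_5\leftrightarrow C_9$, $C_6\leftrightarrow C_8$, which carries (10) onto $\{C_1,\ C_3+C_7+C_5,\ C_2+C_4+C_6+C_8+C_9\}$, i.e.\ fusion~(10'). For the parameters one uses that $\overline{\mathrm{X}}$ has eigenvalues $\overline{k}=\ell=3+r$, $\overline{r}=-1-s=1$ and $\overline{s}=-1-r$, whence $\overline{k}=2-\overline{s}$ and $\overline{\ell}=n-\overline{k}-1=r(3+r)=-\,\overline{k}(\overline{s}+1)$; renaming $\overline{\mathrm{X}}$ as $\mathrm{X}$, this is exactly the condition stated in part~(ii).

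The only load-bearing point, and the one I would be most careful about, is that the three eigenvalues of $M$ coincide across the \emph{whole} family once $s=-2$: the equalities $p_0+p_2+q_0q_2=2p_1+q_1^2$ and $p_0+p_1+q_0q_1=2p_2+q_2^2=p_1+p_2+q_1q_2$ must be genuine polynomial identities in $r$, not numerical accidents at the two feasible parameter sets (Paley on $9$ vertices at $r=1$, the Clebsch graph at $r=2$). Verifying these identities, and checking that no further column coincidence drops the rank below $3$, is the whole of the work; the rest is bookkeeping, which is why the paper outsources it to a symbolic computation.
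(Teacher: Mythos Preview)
Your argument is correct and complete. The paper's own proof amounts to one sentence: the character table is in a single parameter $r$, so Lemma~\ref{Bannai-Muzychuk} can be checked symbolically, and this check was carried out in GAP.

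Your route is genuinely different and more transparent. Instead of invoking the Bannai--Muzychuk criterion on the full $6\times 6$ table, you isolate the single matrix $M=C_2+C_4+C_9$, compute its nine tensor-eigenvalues $p_a+p_b+q_aq_b$ directly, and verify that they collapse (as polynomial identities in $r$) to exactly three values $3(r+1)(r+3)$, $r^2+4r+1$, $-3$, pairwise separated by $(r+2)^2$. Appealing to the classical fact that a connected regular graph with three eigenvalues is strongly regular then hands you the rank-$3$ scheme $\{I,M,J-I-M\}$, which is visibly a fusion of $H(2,\mathcal{A})$. This bypasses both the column-partition bookkeeping and the computer verification, at the cost of being tailored to this one fusion; the paper's GAP approach is blunter but scans all partitions uniformly. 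Your handling of part~(ii) via the switch involution, and the translation of parameters to $\overline{r}=1$, $\overline{k}=2-\overline{s}$, $\overline{\ell}=-\overline{k}(\overline{s}+1)$, matches the paper exactly.
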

\begin{proof}
The above character table is expressed in one variable and hence we were able to check for possible fusions using Lemma \ref{Bannai-Muzychuk}. The above fusion has been verified via GAP \cite{GAP4}.
\end{proof}
Applying the Krein conditions (see Section 2.1) on the eigenvalues for the strongly-regular graph in Theorem \ref{Clebsch} shows that $r\leq 2$. When $r=1$ the graph $A(\mathrm{X})$ must be the Paley graph on $9$ vertices, since this graph is determined by its spectrum. When $r=2$, the spectrum of the complement graph, $\overline{\mathrm{X}}$ matches that of the Clebsch graph. Applying the Krein condition on the switch partner table will restrict $s$ to $-1$, $-2$, or $-3$. The graphs in part (ii) are the complements of the two graphs in part (i).

\begin{theorem}\label{Clebsh}
If $k-\ell =1+2r$ with $r=-s$ then the character table $\mathcal{P}(\mathcal{A})$ has the form

\[
\mathcal{P}(\mathcal{A})= 
    \begin{blockarray}{cccc}
      \begin{block}{[ccc]c}
        1 & r(2r+1) & (r-1)(2r+1) \bigstrut[t] & 1\\
1 & r & -1-r & r(2r+1)\\
1 & -r & -1+r & (r-1)(2r+1)\\
      \end{block}
    \end{blockarray}.
\]

\begin{enumerate}[label=(\roman*)]
    \item For the above character table, we get a rank $3$ non-trivial fusion other than the homogeneous fusion of $H(2,\mathcal{A})$ namely,
    
\begin{itemize}[label={}]
  \item $\{C_{1}, \ C_{2}+C_{4}+C_{6}+C_{8},\ C_{5}+C_{9}+C_{3}+C_{7}\}$.~~~~~~~~~~~~~~~~~~~~~~~~~~~~~~~~~~{(11)}
\end{itemize}
\item The above fusion has a switch partner which gives the rank $3$ fusion,  
\begin{itemize}[label={}]
  \item $\{C_{1}, \ C_{3}+C_{7}+C_{6}+C_{8},\ C_{5}+C_{9}+C_{2}+C_{4}\}$.~~~~~~~~~~~~~~~~~~~~~~~~~~~~~~~~~~{(11')}
  
\end{itemize}
The graph $\mathrm{X}$ in this case satisfies the conditions, $\ell-k=2r+3$ and $\ell=(r+1)(2r+3)$ with $s=-2-r$.

\end{enumerate}

\end{theorem}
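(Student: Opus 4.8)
The plan is to follow exactly the same recipe that proved Theorems~\ref{News}--\ref{Clebsch}: reduce the character table $\mathcal{P}(\mathcal{A})$ to a single free parameter, push it through to the character table $\mathcal{P}(H(2,\mathcal{A}))$ displayed in Section~\ref{sec:classifying}, and then apply the Bannai--Muzychuk criterion (Lemma~\ref{Bannai-Muzychuk}) to the candidate partition. First I would impose the two stated conditions, $r=-s$ and $k-\ell=1+2r$, on the general rank~$3$ table. Substituting $s=-r$ into the orthogonality relation of Section~2.2 (equivalently, into $1+\tfrac{rs}{k}+\tfrac{(-1-r)(-1-s)}{\ell}=0$) gives a relation between $k$, $\ell$ and $r$; combining it with $k-\ell=1+2r$ solves for both degrees, yielding $k=r(2r+1)$ and $\ell=(r-1)(2r+1)$, which is precisely the table claimed in the statement. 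Thus the whole table, and hence $\mathcal{P}(H(2,\mathcal{A}))$, is expressed in the single variable $r$.

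Next I would substitute $s=-r$ (and the resulting $k,\ell$) into the six rows of $\mathcal{P}(H(2,\mathcal{A}))$ and form the matrix $P_{\uptau}$ for the partition $\uptau$ corresponding to fusion~(11), namely $\{\{1\},\{2,4,6,8\},\{5,9,3,7\}\}$ in the indexing of $\mathcal{A}\otimes\mathcal{A}$; in the six-column table of $H(2,\mathcal{A})$ this means grouping the columns $\{C_2+C_4,\ C_6+C_8\}$ into one block and $\{C_5,\ C_9,\ C_3+C_7\}$ into the other, while $C_1$ stays alone. Summing the appropriate rows of $\mathcal{P}(H(2,\mathcal{A}))$, one checks that $P_{\uptau}$ has exactly $3=|\uptau|$ distinct columns once $s=-r$ is enforced: the key cancellations are that the row pairs $(\chi_2,\chi_3)$ and $(\chi_4,\chi_6)$ collapse after setting $s=-r$, because $ks$ becomes $-kr$, $r^2=s^2$, and so on, so the column sums across each block coincide. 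By Lemma~\ref{Bannai-Muzychuk} this proves that~(11) is a fusion. The switch-partner fusion~(11') is then immediate: replacing $\mathrm{X}$ by $\overline{\mathrm{X}}$ permutes the $C_i$ by the switch-partner involution ($C_2\leftrightarrow C_3$, $C_4\leftrightarrow C_7$, $C_6\leftrightarrow C_8$, $C_5\leftrightarrow C_9$ fixed), which carries partition~(11) to~(11'); equivalently one re-derives the table with $r=-s$ replaced by its complementary relation $\ell-k=2r+3$, $\ell=(r+1)(2r+3)$, $s=-2-r$, and the same computation applies verbatim.

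As in the earlier theorems, the honest statement is that all of these column-coincidence checks are finite computations that were carried out (and cross-checked) in GAP~\cite{GAP4} using the open-source implementation of the generalized Hamming scheme, together with the 52-case search over partitions of $\{2,\dots,6\}$ alluded to in Section~\ref{sec:classifying}; here I would simply specialize that search to the locus $r=-s$. I expect the only real obstacle to be purely bookkeeping: verifying by hand that no \emph{other} partition acquires a matching set of columns on the locus $r=-s$ (so that (11)/(11') are genuinely the additional fusions and not merely among them), and making sure the overlap cases — e.g. small $r$ where this table degenerates into an imprimitive one already covered by Theorem~\ref{Completegraphs}, or coincides with the Paley/Theorem~\ref{Payley} locus — are correctly attributed. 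That disambiguation, rather than the algebra of the cancellation itself, is where care is needed, and it is handled by the exhaustive GAP check.
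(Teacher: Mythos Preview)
Your proposal is correct and takes essentially the same route as the paper's proof: impose $s=-r$ and $k-\ell=1+2r$, use the row-orthogonality relation from Section~2.2 to solve for $k=r(2r+1)$ (hence a one-parameter table), and then verify the Bannai--Muzychuk criterion for the candidate partition, in practice via GAP. One small slip to fix in your write-up: on this locus the rows of the fused table that coincide are $\chi_2=\chi_4=\chi_6$ and $\chi_3=\chi_5$ (cf.\ Observation~13), not the pairs $(\chi_2,\chi_3)$ and $(\chi_4,\chi_6)$ you named; this does not affect the argument.
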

\begin{proof}
We can apply the orthogonality conditions on the above character table. If $\ell=k-1-2r$ and $s=-r$ then $k=r(2r+1)$. Hence, the character table can be expressed in one variable. We were able to verify this fusion via GAP. 
\end{proof} 

Applying the Krein conditions on the eigenvalues for the strongly-regular graph in the first part of Theorem \ref{Clebsh} implies $r\geq 2$. When $r=2$, the association scheme is the Clebsch graph scheme, as in Theorem \ref{Clebsch}. When $r=3$, using Brouwer's online directory of strongly-regular graphs, we see that there are $180$ nonisomorphic $2$-graphs with this spectrum. When $r=4$, there is an affine polar graph with this spectrum. It is possible that strongly-regular graphs with the spectrum as in Theorem \ref{Clebsh} would exist for all $r\geq 2$. The first case where such a graph is not known to exist is for $r=7$.

\subsection{Our main result}

Our goal is to find all of the strongly-regular graphs, $\mathrm{X}$ for which $H(2,\mathcal{A})$ has a nontrivial fusion other than the homogeneous fusion.  We do this by determining parameter sets for $\mathcal{I}$-isolating fusions, where $\mathcal{I}$ is a $1$ or $2$ element subset of the nontrivial elements of $H(2,\mathcal{A})$.

If we assume that there is an $\mathcal{I}$-isolating fusion then the graph $\mathrm{X}_{\mathcal{I}}$ cannot have more than $d'+1$ eigenvalues where $d'$ is the rank of the fusion. By considering different sets $\mathcal{I}$, we can easily calculate the eigenvalues of $\mathrm{X}_{\mathcal{I}}$ and give a lower bound on the rank of an $\mathcal{I}$-isolating fusion of $H(2,\mathcal{A})$. These are listed in $19$ Observations. We list all possible rank $3$, $4$ and $5$ fusions of $H(2,\mathcal{A})$ in Lemma \ref{rank5}--\ref{rank3}. We have also listed the Observation used to rule out the existence of a particular fusion.

We will denote the eigenvalue corresponding to the $i^{th}$ character of the basis element $C_{j}$ as $\chi_{i}(C_{j})$. Here, $\chi_{1}(C_{j})$ refers to the trivial character (which gives the degree of the graph) which is always the biggest value in every column of the character table of $H(2,\mathcal{A})$, hence $\chi_{1}(C_{j}) \neq \chi_{i}(C_{j})$ for all $i,\ j=2,\dots,6$. 

Since we have already discussed the cases when $s=-1$ and $k=r$. We can assume that $k>r>0>-1>s$ and $\ell>-1-s>0>-1>-1-r$ for all of the Observations below.

\begin{obs}
 There is no possible rank $3$ or rank $4$ $C_{5}$-isolating fusion. A rank $5$ $C_{5}$-isolating fusion of $H(2,\mathcal{A})$ is possible if and only if exactly one of the equalities $kr=s^2$ or $r=-s$ hold. The condition $kr=s^{2}$ is equivalent to $\chi_{2}(C_{5})=\chi_{6}(C_{5})$ and $r=-s$ is equivalent to $\chi_{4}(C_{5})=\chi_{6}(C_{5})$.
\end{obs}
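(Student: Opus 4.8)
The plan is to use the Bannai–Muzychuk criterion (Lemma~\ref{Bannai-Muzychuk}) applied to the column of $\mathcal{P}(H(2,\mathcal{A}))$ corresponding to $C_5$, together with the constraint that in a $C_5$-isolating fusion the graph $\mathrm{X}_{\{5\}}$ with adjacency matrix $C_5$ must appear as a single class. First I would record that the eigenvalues of $C_5 = A(\mathrm{X})\otimes A(\mathrm{X})$ are exactly the entries of the first displayed column of the character table, namely $k^2$, $kr$, $ks$, $r^2$, $rs$, $s^2$ (with the stated multiplicities). Under the standing assumption $k>r>0>-1>s$, the values $k^2$, $kr$, $r^2$ are positive and distinct, and $k^2$ is strictly the largest; the values $ks$, $rs$ are negative; and $s^2>0$. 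So among these six eigenvalues, the only coincidences that can occur are among $\{r^2, s^2\}$ (giving $r=-s$, since both are positive), among $\{kr, s^2\}$ (giving $kr=s^2$), among $\{ks, rs\}$ (giving $r=-s$ again via $ks=rs \iff k=r$, which is excluded, so this never happens under our assumptions), and $ks=rs$... let me instead simply enumerate: I would argue that $k^2$ is never equal to any other entry, $kr\ne r^2$ (since $k\ne r$), $kr\ne rs$ and $r^2\ne rs$ and $ks\ne s^2$ and so on, leaving precisely the two candidate equalities $kr=s^2$ and $r^2=s^2$, i.e. $r=-s$. This shows $C_5$ has at least $5$ distinct eigenvalues always, and exactly $5$ iff one of these two equalities holds, hence no rank-$3$ or rank-$4$ $C_5$-isolating fusion can exist, and a rank-$5$ one requires at least one of the two equalities.

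Second, for the converse I would show that when exactly one of $kr=s^2$ or $r=-s$ holds, one actually gets a genuine rank-$5$ fusion, not merely a graph with $5$ eigenvalues. The natural candidate partition is $\uptau = \{\{1\},\{5\},\{9\},\{2,4\},\{3,7\},\{6,8\}\}$ merged appropriately so that the two coincident rows of $\mathcal{P}(H(2,\mathcal{A}))$ are grouped — that is, I would merge exactly the two characters $\chi_i$ whose rows agree on every column (not just the $C_5$ column). Here the key observation, which I would verify directly from the displayed character table, is that when $r=-s$ the rows $\chi_4$ and $\chi_6$ are \emph{identical across all five columns} (one checks $r^2=s^2$, $(-1-r)^2=(-1-s)^2$, $2r=2s$... wait, $2r\ne 2s$), so in fact the merging must be of a pair of rows that agree everywhere; I would need to identify which pair that is for each of the two equalities and confirm via the Bannai–Muzychuk column count that the resulting $P_\uptau$ has exactly $5$ distinct columns. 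This is essentially the computation the authors report doing in GAP, but the structural point is that merging two equal rows of a character table always yields a fusion, so the real content is checking that the two equalities $kr=s^2$ and $r=-s$ are precisely the conditions forcing a row-coincidence compatible with isolating $C_5$.

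The main obstacle I anticipate is the \emph{``exactly one''} clause: I must show that if \emph{both} $kr=s^2$ and $r=-s$ hold simultaneously, then $C_5$ is \emph{not} isolable at rank $5$ — presumably because three eigenvalues of $C_5$ then collapse (from $r=-s$ we get $r^2=s^2$, and combined with $kr=s^2$ we get $kr=r^2$, forcing $k=r$, which is excluded by our standing assumption $k>r$). So in fact the two equalities are mutually exclusive under $k>r>0>s$, which makes the ``exactly one'' phrasing automatic rather than a genuine extra case — I would state this explicitly. A secondary subtlety is confirming that the coincidence of eigenvalues of the single matrix $C_5$ forces the coincidence of entire rows of the character table restricted to the classes surviving in the fusion; this uses the fact that once $C_1$, $C_5$, $C_9$ are isolated the remaining columns are determined, and I would check that no \emph{new} column-coincidences are introduced that would drop the rank below $5$. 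Finally I would record the two stated reformulations $kr=s^2 \Leftrightarrow \chi_2(C_5)=\chi_6(C_5)$ and $r=-s \Leftrightarrow \chi_4(C_5)=\chi_6(C_5)$, which are immediate from comparing entries $kr$ vs.\ $s^2$ and $r^2$ vs.\ $s^2$ in the $C_5$ column.
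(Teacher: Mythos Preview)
Your first and third paragraphs match the paper's argument: the paper records the six entries $k^2,kr,ks,r^2,rs,s^2$ of the $C_5$ column, draws a Hasse diagram encoding the strict inequalities forced by $k>r>0>-1>s$ (and $k>|s|$), and reads off that the only two entries not separated by the diagram are $kr$ versus $s^2$ and $r^2$ versus $s^2$. The ``exactly one'' clause is handled just as you do: if $kr=s^2$ and $r=-s$ simultaneously then $k(-s)=s^2$, so $k=-s$, contradicting $k>|s|$ (equivalently $k=r$, contradicting $k>r$). The chain $k^2>kr>r^2>rs>ks$ of length five then forbids any rank $3$ or $4$ $C_5$-isolating fusion.

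Your second paragraph, however, misreads both the statement and the Bannai--Muzychuk mechanism. In this paper ``a rank~$5$ $C_5$-isolating fusion is \emph{possible}'' means only that the $C_5$ column has exactly five distinct values, i.e.\ this single column does not by itself obstruct a rank~$5$ fusion. The observation does \emph{not} assert that such a fusion actually exists; that question is settled later in Lemma~\ref{rank5}, where the conditions from several isolated columns are intersected. In particular $r=-s$ by itself does \emph{not} produce a rank~$5$ $C_5$-isolating fusion of $H(2,\mathcal{A})$ (compare Table~1), so the converse you attempt is false as stated. Your proposed construction---merging two rows that are identical across all columns---is also not how Lemma~\ref{Bannai-Muzychuk} works: a fusion corresponds to a partition of the \emph{columns} such that the column-sums are constant on suitable blocks of rows, not to coincidences of entire rows in the original table. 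You noticed this yourself (``wait, $2r\neq 2s$''). Simply drop the second paragraph; the remaining argument is what the observation requires.
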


 \begin{proof}
 
 We must have the following column by itself in the new character table for any feasible $C_{5}$-isolating fusion.

   \[
\begin{blockarray}{cc}
\mathbf{\textit{C}_{5}} \\
\begin{block}{(c)c}
  k^2 & \chi_{1} \\
  kr & \chi_{2} \\
  ks & \chi_{3} \\
  r^2 & \chi_{4} \\
  rs & \chi_{5} \\
  s^2 & \chi_{6} \\
\end{block}
\end{blockarray}
 \]
 
 For illustration purposes, we draw a poset with points as entries of the above column and line segments are drawn between these points if and only if there is a strict inequality, with the entry of larger value above.
    \begin{center}
         \begin{tikzpicture}[scale=.5]
  \node (one) at (0,6) {$k^2$};
  \node (1) at (0,0) {$rs$};
  \node (b) at (-2,3) {$s^2$};
  \node (c) at (0,2) {$r^2$};
  \node (d) at (0,4) {$kr$};
  \node (zero) at (0,-2) {$ks$};
  \draw (zero) -- (1) -- (1) -- (b) -- (1) -- (c) -- (c) --(d) -- (d) -- (one) -- (b) -- (one);
\end{tikzpicture}
    \end{center}

      The entries in this column that could be equal are: $\chi_{2}(C_{5})=\chi_{6}(C_{5})$ and $\chi_{4}(C_{5})=\chi_{6}(C_{5})$. This can only happen if and only if either $kr=s^2$ or $r=-s$. If both the equalities are satisfied, that is $\chi_{2}(C_{5})=\chi_{4}(C_{5})=\chi_{6}(C_{5})$ then $k=-s$ which is a contradiction since $k>|s|$.
    
  Hence we conclude that a rank $5$, $C_{5}$-isolating fusion is possible if exactly one of the equalities $r=-s$ or $kr=s^2$ hold. The rank $5$ fusion that comes from $kr=s^2$ is covered in Theorem \ref{News}.

  As it is clear from the length of the poset above, in any case we will get at least $5$ rows with distinct entries. Hence, there is no possible rank $3$ or rank $4$ $C_{5}$-isolating fusion using Lemma \ref{Bannai-Muzychuk}.
    
 \end{proof}

\medskip
\begin{obs}
    There is no possible rank $3$ or rank $4$ $C_{9}$-isolating fusion. A rank $5$ $C_{9}$-isolating fusion of $H(2,\mathcal{A})$ is possible if and only if exactly one of the equalities $r=-2-s$ or $\ell (-1-s)=(1+r)^2$ hold. The condition $r=-2-s$ is equivalent to $\chi_{4}(C_{9})=\chi_{6}(C_{9})$, and  $\ell (-1-s)=(1+r)^2$ is equivalent to $\chi_{4}(C_{9})=\chi_{3}(C_{9})$. 
    
\end{obs}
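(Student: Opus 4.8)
The plan is to argue exactly as in the previous observation, by counting the distinct entries of the $C_9$-column of $\mathcal{P}(H(2,\mathcal{A}))$. For any $C_9$-isolating fusion that column must occur by itself in the fused character table, so the partition of the characters $\chi_i$ realizing the fusion must refine the partition of the $\chi_i$ into classes having a common $C_9$-entry; hence the rank of the fusion is at least the number of distinct entries in that column. Using $1+r+s+rs=(1+r)(1+s)$, $(-1-r)^2=(1+r)^2$ and $(-1-s)^2=(1+s)^2$, the six entries of this column (for $\chi_1,\dots,\chi_6$) are $\ell^2$, $-\ell(1+r)$, $-\ell(1+s)$, $(1+r)^2$, $(1+r)(1+s)$ and $(1+s)^2$. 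Under the standing assumptions $\ell>-1-s>0>-1>-1-r$ and $k>r>0>-1>s$, I would first record signs: $\chi_1(C_9)=\ell^2$ is the unique largest entry, the entries $\chi_3(C_9)=\ell(-1-s)$, $\chi_4(C_9)=(1+r)^2$ and $\chi_6(C_9)=(1+s)^2$ are positive, and $\chi_2(C_9)$ and $\chi_5(C_9)$ are negative.

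Next I would rule out as many coincidences as possible, displaying the situation as a poset on the six values with an edge wherever a strict inequality is forced. For the two negative entries, factoring out $1+r>0$ turns $\chi_2(C_9)<\chi_5(C_9)$ into $-\ell<1+s$, i.e.\ into $\ell>-1-s$, which holds; so those two differ. Among the positive entries below $\ell^2$, factoring out $-1-s>0$ turns $\chi_3(C_9)>\chi_6(C_9)$ into $\ell>-1-s$ as well; so $\chi_3(C_9)\ne\chi_6(C_9)$. This leaves exactly two potential equalities: $\chi_4(C_9)=\chi_3(C_9)$, which is $(1+r)^2=\ell(-1-s)$, and $\chi_4(C_9)=\chi_6(C_9)$, which---since $1+r$ and $-1-s$ are both positive---forces $1+r=-1-s$, that is $r=-2-s$. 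These two cannot hold simultaneously, as that would give $\chi_3(C_9)=\chi_6(C_9)$.

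Finally I would assemble the count: the $C_9$-column always has at least five distinct entries, and exactly five precisely when exactly one of $r=-2-s$ and $\ell(-1-s)=(1+r)^2$ holds (otherwise six). Together with the lower-bound principle above and Lemma~\ref{Bannai-Muzychuk}, this rules out every rank $3$ and rank $4$ $C_9$-isolating fusion, and shows that a rank $5$ one is possible exactly in those two cases, the case $\ell(-1-s)=(1+r)^2$ being realized by the switch partner fusion in Theorem~\ref{News}. I expect the only delicate point to be the square-root step for $\chi_4(C_9)=\chi_6(C_9)$, where positivity of $1+r$ and $-1-s$ is needed to pass from $(1+r)^2=(-1-s)^2$ to $1+r=-1-s$; everything else is sign bookkeeping that the standing inequalities make routine.
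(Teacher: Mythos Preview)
Your proposal is correct and follows essentially the same approach as the paper: both arguments list the six $C_9$-entries, use the standing inequalities $\ell>-1-s>0$ and $1+r>0$ to force strict inequalities among all pairs except $\chi_4=\chi_3$ and $\chi_4=\chi_6$, observe these two cannot hold simultaneously (else $\chi_3=\chi_6$), and conclude the column has at least five distinct values. The only cosmetic difference is that you organize the comparisons by sign (positive versus negative entries) while the paper displays them as a Hasse diagram; the resulting chain of length five and the side element $(1+r)^2$ are the same in both.
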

    
\begin{proof}    
    We must have the following column by itself 
in the new character table for any feasible $C_{9}$-isolating fusion.
  \[
\begin{blockarray}{cc}
\mathbf{\textit{C}_{9}} \\
\begin{block}{(c)c}
  \ell^2 & \chi_{1} \\
  -\ell(1+r) & \chi_{2} \\
  -\ell(1+s) & \chi_{3} \\
  (1+r)^2 & \chi_{4} \\
  (1+r)(1+s) & \chi_{5} \\
  (1+s)^2 & \chi_{6} \\
\end{block}
\end{blockarray}
 \]
 The poset for this column is
 
   \begin{center}
         \begin{tikzpicture}[scale=.5]
  \node (one) at (0,6) {$ \ell ^{2}$};
  \node (1) at (0,0) {$(1+r)(1+s)$};
  \node (b) at (-4,3) {$(1+r)^2$};
  \node (c) at (0,2) {$(1+s)^2$};
  \node (d) at (0,4) {$- \ell (1+s)$};
  \node (zero) at (0,-2) {$-\ell (1+r)$};
  \draw (zero) -- (1) -- (c) (b) -- (1) (c)--(d) -- (d) -- (one) -- (b) -- (one);
\end{tikzpicture}
    \end{center}
  The only pair of entries in this column that could possibly be equal are $\chi_{4}(C_{9})=\chi_{6}(C_{9})$ or  $\chi_{4}(C_{9})=\chi_{3}(C_{9})$. This happens if and only if either $r=-2-s$ or $-\ell (1+s)=(1+r)^2$. If both the equalities are satisfied, that is $\chi_{3}(C_{9})=\chi_{4}(C_{9})=\chi_{6}(C_{9})$ then $\ell=-1-s$ which is a contradiction since $\ell>|-1-s|$ and is clear from the poset.
    
    Hence a rank $5$, $C_{9}$-isolating fusion is possible if either $\chi_{4}(C_{9})=\chi_{6}(C_{9})$ or $\chi_{4}(C_{9})=\chi_{3}(C_{9})$ implying either $r=-2-s$ or $-\ell (1+s)=(1+r)^2$ holds. The feasible rank $5$ that comes from the latter case is covered in Theorem \ref{News}.

    There are no possible $C_{9}$-isolating fusions of rank $3$ or $4$.
    
 \end{proof}
   \medskip 
    \begin{obs}
    There is no possible rank $3$ or rank $4$ $(C_{2}+C_{4})$-isolating fusion. A rank $5$ $(C_{2}+C_{4})$-isolating fusion of $H(2,\mathcal{A})$ is possible if and only if $k+s=2r$. The condition $k+s=2r$ is equivalent to $\chi_{3}(C_{2}+C_{4})=\chi_{4}(C_{2}+C_{4})$. 
    
\end{obs}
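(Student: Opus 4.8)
The plan is to repeat the poset method used in Observations 1 and 2, now applied to the column of $P(H(2,\mathcal{A}))$ corresponding to the matrix $C_2+C_4$. That column is
\[
\bigl(2k,\ k+r,\ k+s,\ 2r,\ r+s,\ 2s\bigr)^{T},
\]
with the six entries indexed by $\chi_1,\dots,\chi_6$ respectively. First I would record the strict inequalities forced by our standing assumptions $k>r>0>-1>s$ (equivalently $k>r>0$ and $s<-1$): for instance $2k>k+r>k+s$, $2k>2r>r+s>2s$, $2k>k+r>2r$, and $k+s>2s$, $k+s>r+s$. Drawing the Hasse diagram of these relations shows that the only pair of entries that is not separated by a chain of strict inequalities is $\chi_3(C_2+C_4)=k+s$ and $\chi_4(C_2+C_4)=2r$; every other pair is comparable and distinct, so at least five of the six rows always have pairwise-distinct entries in this column.

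Next I would extract the numerical consequences. Since the column must appear by itself in $P_\uptau$ for a $(C_2+C_4)$-isolating fusion, Lemma~\ref{Bannai-Muzychuk} requires the number of distinct values in that column to be at most $|\uptau|$. Because at least five values are always distinct, no rank $3$ or rank $4$ such fusion can exist. A rank $5$ fusion requires exactly one coincidence among the six entries, and the poset analysis shows the only available one is $k+s=2r$, i.e. $\chi_3(C_2+C_4)=\chi_4(C_2+C_4)$; moreover one checks this single equality does not force any further collapse among the remaining entries (e.g.\ it cannot simultaneously give $k+r=k+s$ or $2r=2s$, as those would need $s=r$, contradicting $r>s$), so exactly five distinct values occur and Lemma~\ref{Bannai-Muzychuk} is satisfiable. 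This establishes the stated equivalence.

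The only genuine content is verifying that $k+s=2r$ is the \emph{unique} possible coincidence and that it is compatible with a rank $5$ partition — i.e.\ that the other four entries stay distinct from each other and from the merged value under this constraint. That is the step I would write out carefully; everything else is the routine bookkeeping of comparing linear expressions in $k,r,s$ under $k>r>0>-1>s$, exactly as in the proofs of Observations 1 and 2.
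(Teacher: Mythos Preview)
Your proposal is correct and follows essentially the same argument as the paper: you list the $C_2+C_4$ column, build the Hasse diagram of strict inequalities under the standing hypotheses $k>r>0>-1>s$, observe that the unique incomparable pair is $(k+s,2r)$, and read off the rank bounds from the chain length. The only addition beyond the paper's version is your explicit check that the equality $k+s=2r$ does not force further collapses among the remaining entries, which is a reasonable clarification but not a different method.
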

        
  \begin{proof}  
    We must have the following column by itself 
in the new character table for any feasible $(C_{2}+C_{4})$-isolating fusion.
  \[
\begin{blockarray}{cc}
\mathbf{\textit{C}_{2}+\textit{C}_{4}} \\
\begin{block}{(c)c}
  2k & \chi_{1} \\
  k+r & \chi_{2} \\
  k+s & \chi_{3} \\
  2r & \chi_{4} \\
  r+s & \chi_{5} \\
  2s & \chi_{6} \\
\end{block}
\end{blockarray}
 \]
 The poset for this column is
   \begin{center}
         \begin{tikzpicture}[scale=.5]
  \node (one) at (0,6) {$2k$};
  
  \node (b) at (-4,2) {$k+s$};
  
  \node (d) at (0,4) {$k+r$};
  \node (c) at (0,2) {$2r$};
  \node (1) at (0,0) {$r+s$};
  \node (zero) at (0,-2) {$2s$};
  \draw (zero) -- (1) -- (1) -- (b) -- (b) --(d) -- (d) -- (one) (1) -- (c) -- (d);
\end{tikzpicture}
    \end{center}
  The only pair of entries that could be equal is $\chi_{3}(C_{2}+C_{4})=\chi_{4}(C_{2}+C_{4})$, which happens when $k+s=2r$. Hence, a rank $5$ fusion is possible if $\chi_{3}(C_{2}+C_{4})=\chi_{4}(C_{2}+C_{4})$ implying $k+s=2r$. 
  
  As clear from the height of our poset, there are no rank $3$ or $4$ fusions possible for this case.

\end{proof}
\medskip
     \begin{obs}
    There is no possible rank $3$ or rank $4$ $(C_{3}+C_{7})$-isolating fusion. A rank $5$ $(C_{3}+C_{7})$-isolating fusion of $H(2,\mathcal{A})$ is possible if and only if $\ell=r-2s-1$. This condition is equivalent to $\chi_{2}(C_{3}+C_{7})=\chi_{6}(C_{3}+C_{7})$. 
    
\end{obs}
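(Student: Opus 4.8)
The plan is to follow the template of the preceding observations. I will isolate the column of $\mathcal{P}(H(2,\mathcal{A}))$ indexed by $C_{3}+C_{7}$, determine the partial order its six entries must satisfy under the standing primitivity hypotheses $k>r>0>-1>s$ and $\ell>-1-s>0$, and then read off from that poset both the minimum number of distinct entries (which gives a lower bound on the rank of any $(C_{3}+C_{7})$-isolating fusion via Lemma~\ref{Bannai-Muzychuk}) and the only pair of entries that can become equal.

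First I would record the column: under $\chi_{1},\dots,\chi_{6}$ the entries of $C_{3}+C_{7}$ are, in order, $2\ell$, $\ell-1-r$, $\ell-1-s$, $-2(1+r)$, $-2-r-s$, and $-2(1+s)$. Then I would run through the pairwise comparisons; after cancellation each difference becomes a sum of quantities that are positive under the hypotheses --- for instance $\chi_{3}(C_{3}+C_{7})-\chi_{2}(C_{3}+C_{7})=r-s$, $\chi_{1}(C_{3}+C_{7})-\chi_{3}(C_{3}+C_{7})=\ell+1+s$, $\chi_{2}(C_{3}+C_{7})-\chi_{5}(C_{3}+C_{7})=\ell+1+s$, $\chi_{5}(C_{3}+C_{7})-\chi_{4}(C_{3}+C_{7})=r-s$, $\chi_{6}(C_{3}+C_{7})-\chi_{4}(C_{3}+C_{7})=2(r-s)$, and so on --- while the one exception is
\[
\chi_{2}(C_{3}+C_{7})-\chi_{6}(C_{3}+C_{7})=(\ell-1-r)-\bigl(-2(1+s)\bigr)=\ell-r+1+2s,
\]
which has no sign forced by the hypotheses. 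This yields the poset with $\chi_{1}$ at the top, then $\chi_{3}$, then the single incomparable pair $\{\chi_{2},\chi_{6}\}$, then $\chi_{5}$, then $\chi_{4}$ at the bottom, and I would draw it in the same style as the earlier observations.

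The conclusions then fall out of the shape of this poset. Every maximal chain has five elements (for example $\chi_{4}<\chi_{5}<\chi_{2}<\chi_{3}<\chi_{1}$), so the $C_{3}+C_{7}$ column always has at least five distinct entries, and hence by Lemma~\ref{Bannai-Muzychuk} no $(C_{3}+C_{7})$-isolating fusion can have rank $3$ or $4$. The number of distinct entries drops to exactly five precisely when the unique incomparable pair is collapsed, that is $\chi_{2}(C_{3}+C_{7})=\chi_{6}(C_{3}+C_{7})$, equivalently $\ell=r-2s-1$; since that is the only pair that can coincide, no further degeneracy is forced, exactly five distinct entries remain, and the obstruction to a rank-$5$ fusion disappears --- such a fusion is realised, for instance, by fusion~(5) of Theorem~\ref{News} (there $\ell=-2s$ and $r=1$, so $\ell=r-2s-1$). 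The bulk of the argument is the routine bookkeeping of the fifteen pairwise differences; the one point that needs a little care --- and is the closest thing to an obstacle here --- is checking that the standing inequalities really do suffice to fix the sign of every difference except for the pair $\{\chi_{2},\chi_{6}\}$: for example $\chi_{1}(C_{3}+C_{7})-\chi_{5}(C_{3}+C_{7})=2\ell+2+r+s$ is positive not term by term but because it equals $(\ell+1+r)+(\ell+1+s)$, a sum of two quantities that are positive by hypothesis. Otherwise the proof runs exactly parallel to the observations preceding it.
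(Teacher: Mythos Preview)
Your proof is correct and follows essentially the same approach as the paper: both record the $C_{3}+C_{7}$ column, build the poset on its six entries under the standing inequalities, and conclude from the chain of length five that no rank $3$ or $4$ $(C_{3}+C_{7})$-isolating fusion exists and that the lone incomparable pair $\{\chi_{2},\chi_{6}\}$ collapses precisely when $\ell=r-2s-1$. Your version is simply more explicit about verifying the fifteen pairwise differences and about exhibiting fusion~(5) of Theorem~\ref{News} as a witness for the ``if'' direction, whereas the paper only draws the poset and asserts the conclusion.
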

          \begin{proof}
    We must have the following column by itself 
in the new character table for any feasible $(C_{3}+C_{7})$-isolating fusion.
    
  \[
\begin{blockarray}{cc}
\mathbf{\textit{C}_{3}+\textit{C}_{7}} \\
\begin{block}{(c)c}
  2\ell & \chi_{1} \\
  \ell-r-1 & \chi_{2} \\
  \ell-s-1 & \chi_{3} \\
  -2-2r & \chi_{4} \\
  -2-r-s & \chi_{5} \\
  -2-2s & \chi_{6} \\
\end{block}
\end{blockarray}
 \]
 The poset for this column is
   \begin{center}
         \begin{tikzpicture}[scale=.5]
  \node (one) at (0,8) {$2 \ell$};
  
  \node (b) at (0,4) {$\ell-r-1$};
  
  \node (d) at (0,6) {$\ell-s-1$};
  \node (c) at (-4,4) {$-2-2s$};
  \node (1) at (0,2) {$-2-r-s$};
  \node (zero) at (0,0) {$-2-2r$};
  \draw (zero) -- (1) -- (b)-- (d) -- (one) (d) -- (c) -- (1)
  ;
\end{tikzpicture}
    \end{center}
    
    The only pair of entries in this column that could possibly be equal are $\chi_{2}(C_{3}+C_{7})=\chi_{6}(C_{3}+C_{7})$ when $\ell=r-2s-1$. Hence a rank $5$ $C_{3}+C_{7}$-isolating fusion is possible if $\chi_{2}(C_{3}+C_{7})=\chi_{6}(C_{3}+C_{7})$ implying $\ell=r-2s-1$. This fusion is compatible with Observation $1$ and is covered in Theorem \ref{News}. 
    
    There are no possible $\textit{C}_{3}+\textit{C}_{7}$-isolating fusions of rank $3$ or $4$.
    
\end{proof}    
\medskip

By Observations $1$--$4$, all possible rank $5$ fusions are covered in Theorems \ref{Completegraphs}--\ref{Clebsh}. Hence, we will not consider the possibilities for any rank $5$ fusions in our remaining Observations. This result is stated below;

\begin{Lemma}\label{rank5}
The generalized Hamming scheme does not have any non-trivial fusion of rank $5$, other than the special case fusions listed in Theorems \ref{Completegraphs} and \ref{News}.
\end{Lemma}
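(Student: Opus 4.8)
The plan is to deduce Lemma \ref{rank5} as a direct corollary of Observations $1$--$4$ together with the classification theorems already proved. The key point is that any rank $5$ fusion of $H(2,\mathcal{A})$ is a proper fusion that merges exactly two of the five nontrivial relations $C_5, C_9, C_2+C_4, C_3+C_7, C_6+C_8$ into a single relation while leaving the other three untouched. Indeed, if $\uptau$ is the partition of the index set of $H(2,\mathcal{A})$ realizing the fusion, then $|\uptau| = 5$ forces $\{0\}$ to be a block and the remaining five indices to be split into four blocks, which can only happen if one block has two elements and the other three are singletons. Thus a rank $5$ fusion must be an $\mathcal{I}$-isolating fusion for some pair $\mathcal{I} \subseteq \{C_5, C_9, C_2+C_4, C_3+C_7, C_6+C_8\}$, and in particular, each of the three singleton relations (which includes at least three of our five distinguished relations, possibly after relabelling) must appear as a column by itself in $P_{\uptau}$.

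First I would observe that in a rank $5$ fusion, at least one of the ``single'' generators $C_5$ or $C_9$ must remain isolated: since only one pair among the five relations is merged, at least three of $C_5, C_9, C_2+C_4, C_3+C_7$ survive as separate relations of the fusion. Hence the fusion is simultaneously a $C_j$-isolating fusion for at least three distinct $j$ among these four indices. But Observations $1$--$4$ show that each $C_j$-isolating fusion of rank $5$ requires a specific equality among the eigenvalues, and that $C_j$-isolating fusions of rank $3$ or $4$ are impossible. Applying the relevant Observation to whichever of $C_5$, $C_9$, $C_2+C_4$, $C_3+C_7$ remains isolated pins down the eigenvalue parameters of $\mathcal{A}$ precisely, and in every instance the resulting parameter set is one already classified: $kr = s^2$ (or its switch partner $-\ell(1+s)=(1+r)^2$) gives the character tables of Theorem \ref{News}, $r = -s$ (resp. $r=-2-s$, $k+s=2r$, $\ell = r-2s-1$) reduces to a case already subsumed by the imprimitive table of Theorem \ref{Completegraphs} or by Theorem \ref{News}, once one checks the compatibility remarks already recorded after each Observation.

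The argument then concludes by noting that for each of these parameter sets the complete list of fusions has already been determined (via \textsc{Gap} and Lemma \ref{Bannai-Muzychuk}) in Theorems \ref{Completegraphs} and \ref{News}, and the only rank $5$ fusions appearing there are fusions $(1)$, $(1')$, $(5)$ and $(5')$. Since any hypothetical rank $5$ fusion forces $\mathcal{A}$ into one of these parameter families, no rank $5$ fusion outside this list can exist. I would write this up as: suppose a rank $5$ fusion exists; by the block-counting above it is $C_j$-isolating for some $j \in \{5,9\}$ together with at least one more $j' \in \{2+4, 3+7\}$; invoke the corresponding Observation to force the eigenvalue equality; note this equality places $\mathcal{A}$ among the parameter sets of Theorems \ref{Completegraphs} or \ref{News}; and finally cite the exhaustive fusion lists in those theorems.

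The main obstacle, and the step requiring the most care, is the bookkeeping that each eigenvalue equality flagged in Observations $1$--$4$ genuinely lands in one of the two named theorems rather than producing a new case — in particular verifying that the equalities $r=-s$, $r=-2-s$, $k+s=2r$ and $\ell=r-2s-1$ are each \emph{either} incompatible with the standing assumption $k>r>0>-1>s$ (hence vacuous after the imprimitive cases were handled) \emph{or} force the parameters into the $kr=s^2$ family already covered by Theorem \ref{News}. The compatibility remarks appended to each Observation (``this fusion is compatible with Observation $1$ and is covered in Theorem \ref{News}'') do most of this work, so the proof of Lemma \ref{rank5} itself can be quite short, essentially a one-paragraph assembly of these facts.
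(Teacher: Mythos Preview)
Your overall strategy (use Observations $1$--$4$ on the columns that survive isolated in a rank $5$ fusion) is the same as the paper's, but the proposal contains two genuine gaps.

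First, the counting argument is wrong. You claim that ``at least three of $C_5, C_9, C_2+C_4, C_3+C_7$ survive as separate relations'' and hence that at least one of $C_5, C_9$ is isolated. But merging exactly two of the five nontrivial relations leaves three isolated, and if the merged pair lies entirely inside $\{C_5, C_9, C_2+C_4, C_3+C_7\}$ only \emph{two} of those four survive. In particular the fusion $\{C_5+C_9,\ C_2+C_4,\ C_3+C_7,\ C_6+C_8\}$ has neither $C_5$ nor $C_9$ isolated, so your reduction to ``$C_j$-isolating for some $j\in\{5,9\}$'' fails outright on this case.

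Second, and more seriously, the claim that each single eigenvalue equality from Observations $1$--$4$ already forces $\mathcal{A}$ into the families of Theorems \ref{Completegraphs} or \ref{News} is false. The condition $r=-s$ (from Observation $1$) is perfectly compatible with $k>r>0>-1>s$ and does not imply $kr=s^2$; indeed the graphs of Theorem \ref{Clebsh} satisfy $r=-s$ and are not in Theorem \ref{News}. Likewise $k+s=2r$ or $r=-2-s$ by themselves pin down nothing. What actually rules out the non-special rank $5$ fusions is that several columns are simultaneously isolated, so the \emph{row partition} they force (which pair $\chi_i,\chi_j$ must merge) has to be the same across all of them; the pairs of Observations then give two eigenvalue equalities whose conjunction is either contradictory or reduces to the listed theorems. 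Your write-up applies the Observations one at a time and never invokes this consistency of the dual partition.

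The paper closes both gaps by simply tabulating all ten rank $5$ partitions and, for each, citing the pair of Observations whose combined constraints either contradict one another or match Theorem \ref{Completegraphs} or \ref{News}. Your argument can be repaired along the same lines, but as written it does not go through.
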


\begin{proof}
The following table includes all possible rank $5$ fusions of $H(2,\mathcal{A})$. For a fusion to be of rank $5$, we need exactly three isolated matrices. This clearly puts many restrictions on our eigenvalues and which pairs of irreducible characters will be equal. We refer to the fusions arising from an imprimitive scheme in Theorem \ref{Completegraphs} as the imprimitive case. A dash in the final column indicates that the fusion is never possible.

\begin{table}[ht]
  \centering
    \begin{tabular}{||c c c||}
    \hline
 Rank $5$ fusions& Observation
 & Theorem \\ [0.3ex] 
 \hline\hline
 $\{C_{5}, \ C_{9},\ C_{2}+C_{4}, \ C_{3}+C_{7}+C_{6}+C_{8}\}$ & Imprimitive case & \ref{Completegraphs}(i) \\ 
 \hline
 $\{C_{5}, \ C_{9}, \ C_{3}+C_{7}, \ C_{2}+C_{4}+C_{6}+C_{8}\}$ & Imprimitive case & \ref{Completegraphs}(ii)  \\ 
 \hline
$\{C_{5}, \ C_{9}, \ C_{6}+C_{8}, \ C_{2}+C_{4}+C_{3}+C_{7}\}$ & 1,2 & -  \\ 
 \hline
  $\{C_{5}, \ C_{2}+C_{4}, \ C_{3}+C_{7}, \ C_{9}+C_{6}+C_{8}\}$ & 1,3 & -  \\
 \hline
  $\{C_{5}, \ C_{2}+C_{4}, \ C_{6}+C_{8}, \ C_{9}+C_{3}+C_{7}\}$ & 1,3 & - \\
 \hline
$\{C_{5}, \ C_{3}+C_{7}, \ C_{6}+C_{8}, \ C_{9}+C_{2}+C_{4}\}$ & 1,4 & \ref{News}(i)  \\
 \hline
  $\{C_{9}, \ C_{2}+C_{4}, \ C_{6}+C_{8}, \ C_{5}+C_{3}+C_{7}\}$ & 2,3 & \ref{News}(ii)  \\
 \hline
 $\{C_{9}, \ C_{3}+C_{7}, \ C_{6}+C_{8}, 
  C_{5}+C_{2}+C_{4}\}$ & 2,4 & -  \\
 \hline
  $\{C_{9}, \ C_{3}+C_{7}, \ C_{2}+C_{4}, \ C_{5}+C_{6}+C_{8}\}$ & 3,4 & -  \\
 \hline
  $\{C_{2}+C_{4}, \ C_{3}+C_{7}, \ C_{6}+C_{8}, \ C_{5}+C_{9}\}$ & 3,4 & -  \\
 \hline
    \end{tabular}
  \caption{Rank 5 fusions}
\end{table}

\end{proof}

From now on, we will replace the notation $\chi_{i}(C_{j})$ with $\chi_{i}$ as the column or sum of columns that is referred to is obvious from the Observation statement.

 \begin{obs}
    There are no possible rank $3$ or rank $4$, $(C_{2}+C_{4})+C_{5}$-isolating fusions.
    
\end{obs}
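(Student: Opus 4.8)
The plan is to argue exactly as in Observations~1--4: I would show that the single column of $\mathcal{P}(H(2,\mathcal{A}))$ indexed by the relation $C_2+C_4+C_5$ --- which by Lemma~\ref{Bannai-Muzychuk} must stand by itself in the reduced character table of any $(C_2+C_4)+C_5$-isolating fusion --- already contains at least five distinct entries. Since the matrix $C_2+C_4+C_5$ lies in the Bose--Mesner algebra of any fusion in which it is a relation, its number of distinct eigenvalues is at most the rank of that fusion, so five distinct eigenvalues rule out every fusion of rank $3$ or $4$.

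First I would record the column, obtained by adding the $C_2+C_4$ and $C_5$ columns of $\mathcal{P}(H(2,\mathcal{A}))$; its entries on $\chi_1,\dots,\chi_6$ are
\[
k^2+2k,\qquad kr+k+r,\qquad ks+k+s,\qquad r^2+2r,\qquad rs+r+s,\qquad s^2+2s.
\]
The step that makes the argument transparent is the identity $xy+x+y=(x+1)(y+1)-1$: after adding $1$ to every entry the column becomes the list of all products of pairs from $\{k+1,\,r+1,\,s+1\}$, namely $(k+1)^2$, $(k+1)(r+1)$, $(k+1)(s+1)$, $(r+1)^2$, $(r+1)(s+1)$, $(s+1)^2$. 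Under the standing assumption $k>r>0>-1>s$ we have $k+1>r+1>1>0>s+1$, and reading off the poset of these products (as in Observations~1--4) gives the chain
\[
(k+1)(s+1)\ <\ (r+1)(s+1)\ <\ 0\ <\ (r+1)^2\ <\ (k+1)(r+1)\ <\ (k+1)^2,
\]
that is, $\chi_3<\chi_5<\chi_4<\chi_2<\chi_1$ on $C_2+C_4+C_5$. The sixth entry $\chi_6=s^2+2s$ need not be placed; it lies strictly between $\chi_5$ and $\chi_1$. Hence the column has at least five distinct entries, so $C_2+C_4+C_5$ has at least five distinct eigenvalues, and no rank~$3$ or rank~$4$ $(C_2+C_4)+C_5$-isolating fusion can exist.

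The only point requiring care is the strictness of the five inequalities. Each successive difference in the chain factors as a nonzero element of $\{k+1,\,r+1,\,s+1\}$ times a nonzero element of $\{k-r,\,r-s\}$ (up to sign), so strictness is automatic in the primitive range $k>r>0>-1>s$ --- the range that remains once the imprimitive strongly-regular graphs have been dealt with in Theorem~\ref{Completegraphs}. No hypotheses beyond this are needed, and once the strictness is recorded the conclusion is immediate from the product structure of the column.
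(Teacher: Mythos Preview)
Your proof is correct and follows essentially the same route as the paper: both identify the chain $\chi_3<\chi_5<\chi_4<\chi_2<\chi_1$ in the $(C_2+C_4)+C_5$ column (with $\chi_6$ sitting strictly between $\chi_5$ and $\chi_1$), which already forces at least five distinct eigenvalues and rules out any rank~$3$ or rank~$4$ isolating fusion. Your shift-by-one identity $xy+x+y=(x+1)(y+1)-1$ is a tidy way to read off the inequalities, but the underlying poset argument is exactly the paper's.
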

    
\begin{proof}
  We must have the following column by itself 
in the new character table for any feasible $(C_{2}+C_{4})+C_{5}$-isolating fusion.
    
  \[
\begin{blockarray}{cc}
\mathbf{(\textit{C}_{2}+\textit{C}_{4})+\textit{C}_{5}} \\
\begin{block}{(c)c}
  2k+k^2 & \chi_{1} \\
  kr+k+r& \chi_{2} \\
  ks+k+s & \chi_{3} \\
  r^2+2r & \chi_{4} \\
  rs+r+s & \chi_{5} \\
  s^2+2s & \chi_{6} \\
\end{block}
\end{blockarray}
 \]
The poset for this column is

 \begin{center}
         \begin{tikzpicture}[scale=.5]
  \node (one) at (0,8) {$2 k +k ^{2}$};
  
  \node (b) at (0,6) {$kr+k+r$};
  
  \node (d) at (0,4) {$r^{2}+2r$};
  \node (c) at (0,2) {$rs+r+s$};
  \node (1) at (0,0) {$ks+k+s$};
  \node (zero) at (4,5) {$s^2+2s$};
  \draw (c) -- (1) (zero) -- (c) -- (d) -- (b) -- (one) -- (zero);
\end{tikzpicture}
    \end{center}
   
  As clear from the height of the above poset, the possible equalities are either $\chi_{4}=\chi_{6}$ when $r=-2-s$, or  $\chi_{2}=\chi_{6}$ when $kr+k+r=s^{2}+2s$. 
  
  There are no possible rank $3$ or rank $4$ $(\textit{C}_{2}+\textit{C}_{4})+\textit{C}_{5}$-isolating fusions.
\end{proof}
\medskip    

\begin{obs}
    A rank $4$, $(C_{3}+C_{7})+C_{5}$-isolating fusion is possible if and only if $k+r=3-s$. This condition is equivalent to $\chi_{2}(C_{3}+C_{7}+C_{5})=\chi_{6}(C_{3}+C_{7}+C_{5})$ and $\chi_{4}(C_{3}+C_{7}+C_{5})=\chi_{3}(C_{3}+C_{7}+C_{5})$. The only possible rank $3$ $(C_{3}+C_{7})+C_{5}$-isolating fusion is covered under Theorem \ref{Clebsch}.
    
\end{obs}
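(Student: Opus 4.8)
The plan is to carry out the same single-column analysis used in Observations~1--5, now for the column of $\mathcal{P}(H(2,\mathcal{A}))$ indexed by $C_3+C_7+C_5$. First I would add the $C_3+C_7$ column to the $C_5$ column, obtaining the entries
\[
\chi_1:\ 2\ell+k^2,\qquad \chi_2:\ kr+\ell-1-r,\qquad \chi_3:\ ks+\ell-1-s,
\]
\[
\chi_4:\ r^2-2r-2,\qquad \chi_5:\ rs-r-s-2,\qquad \chi_6:\ s^2-2s-2,
\]
and then draw the poset of these values under the standing assumptions $k>r>0>-1>s$ and $\ell>-1-s>0$. All the relevant differences factor through $(r-s)$ or stay manifestly positive: for instance $\chi_2-\chi_3=(k-1)(r-s)$, $\chi_4-\chi_5=(r-1)(r-s)$, $\chi_5-\chi_6=(s-1)(r-s)$, $\chi_2-\chi_4=r(k-r)+\ell+r+1$, $\chi_2-\chi_5=r(k-s)+\ell+1+s$. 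These show that $\chi_1$ is the strict maximum (so it is always isolated), that $\chi_2$ is strictly larger than each of $\chi_3,\chi_4,\chi_5$, and that $\chi_5<\chi_6$. Consequently the only pairs among $\chi_2,\dots,\chi_6$ that can coincide are $\{\chi_2,\chi_6\}$, $\{\chi_3,\chi_4\}$, $\{\chi_3,\chi_5\}$, $\{\chi_3,\chi_6\}$, $\{\chi_4,\chi_5\}$ and $\{\chi_4,\chi_6\}$.

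For a rank~$4$ $(C_3+C_7)+C_5$-isolating fusion the six characters must be merged into four classes, so this column may take at most four distinct values; since $\chi_1$ is isolated, at least two of the coincidences above must occur. The principal pattern is $\chi_2=\chi_6$ together with $\chi_3=\chi_4$: subtracting the two defining equations eliminates $\ell$ and leaves $(k-1)(r-s)=(r-s)(2-r-s)$, hence $k+r+s=3$, equivalently $k+r=3-s$; and conversely, using the scheme's row-orthogonality relation $\ell(k+rs)=-k(1+r)(1+s)$ to recover $\ell$ from $k,r,s$, the condition $k+r=3-s$ gives back both equalities. For the remaining ways of producing two coincidences I would impose the equations, again substitute the orthogonality relation to pin $\ell$, and verify in each case that the parameters either satisfy $k+r=3-s$ anyway or fall into a family already dealt with --- the imprimitive family $s=-1$ or $k=r$ of Theorem~\ref{Completegraphs}, or the family $k=s^2,\ r=1$ of Theorem~\ref{News} --- or are numerically impossible under $k>r$. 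This yields the forward implication. Conversely, a parameter set with $k+r=3-s$ is the situation of Theorem~\ref{Crazyrank4}, where fusion~$(9)=\{C_1,\,C_2+C_4+C_9,\,C_3+C_7+C_5,\,C_6+C_8\}$ is a rank~$4$ $(C_3+C_7)+C_5$-isolating fusion; this supplies the converse.

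For the rank~$3$ claim the column must take at most three values, so $\chi_2,\dots,\chi_6$ have to collapse onto two value-classes. Because $\chi_2$ strictly dominates $\chi_3,\chi_4,\chi_5$ and $\chi_5<\chi_6$, the only admissible shape is $\{\chi_2,\chi_6\}$ and $\{\chi_3,\chi_4,\chi_5\}$. Then $\chi_4=\chi_5$ forces $r=1$, and feeding $r=1$ into $\chi_3=\chi_4$ and $\chi_2=\chi_6$ yields $k=2-s$ (hence $\ell=s^2-s-2$) --- precisely the parameter family of Theorem~\ref{Clebsch}(ii), for which fusion~$(10')=\{C_1,\,C_3+C_7+C_5,\,C_2+C_4+C_6+C_8+C_9\}$ is the unique rank~$3$ $(C_3+C_7)+C_5$-isolating fusion. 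So no rank~$3$ such fusion exists outside Theorem~\ref{Clebsch}, as claimed.

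The hard part is the rank~$4$ case distinction: besides $\chi_2=\chi_6,\ \chi_3=\chi_4$ there are several other two-coincidence patterns to run through, and for each I must substitute the orthogonality relation and show the resulting parameter equations either reduce to $k+r=3-s$ or land in a previously classified family. Where a pattern survives this single-column test I also have to apply the full Bannai--Muzychuk criterion (Lemma~\ref{Bannai-Muzychuk}) to the remaining summed columns to confirm that no extra rank~$4$ fusion actually arises. The saving grace is the poset together with the factorizations through $(r-s)$, which keep every equation low-degree and the enumeration finite.
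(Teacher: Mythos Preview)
Your plan is correct and follows essentially the same route as the paper: compute the merged column, set up the poset of its six entries, enumerate the coincidence patterns that can collapse $\chi_2,\dots,\chi_6$ into three (for rank~4) or two (for rank~3) values, and reduce each surviving pattern either to $k+r=3-s$ or to a previously classified family. Your factorizations through $(r-s)$ are exactly how the paper's poset is justified, and your rank~3 analysis ($\chi_4=\chi_5\Rightarrow r=1$, then $k=2-s$) lands precisely on Theorem~\ref{Clebsch}(ii), as the paper does.

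Two minor differences worth noting. First, the paper enumerates four explicit coincidence patterns for rank~4; for the pattern $\chi_2=\chi_6$, $\chi_3=\chi_5$ it substitutes orthogonality to obtain $s(s-1)(r-1)(r+s-2)=0$, lands on either the imprimitive case or $r=1$, and then invokes a GAP check to confirm no rank~4 fusion survives when $r=1$ --- whereas you propose checking the remaining merged columns by hand via Lemma~\ref{Bannai-Muzychuk}. Second, for the patterns $\chi_4=\chi_6,\ \chi_3=\chi_5$ and $\chi_3=\chi_4=\chi_6$ the paper disposes of them by noting incompatibility with the row-merging forced in Observations~2 and~3 (since any rank~4 fusion isolating $(C_3+C_7)+C_5$ must pair with one of the column groupings already analysed there), rather than by a fresh parameter computation. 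Your plan anticipates this step (``apply the full Bannai--Muzychuk criterion to the remaining summed columns''), so the two arguments coincide in substance.
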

\begin{proof}
  We must have the following sum of columns by itself 
in the new character table for any feasible $(C_{3}+C_{7})+C_{5}$-isolating fusion.   
  \[
\begin{blockarray}{cc}
\mathbf{(\textit{C}_{3}+\textit{C}_{7})+\textit{C}_{5}} \\
\begin{block}{(c)c}
  2\ell+k^2 & \chi_{1} \\
  kr+\ell-r-1 & \chi_{2} \\
  ks+\ell-s-1 & \chi_{3} \\
  r^2-2-2r & \chi_{4} \\
  rs-2-r-s & \chi_{5} \\
  s^2-2-2s & \chi_{6} \\
\end{block}
\end{blockarray}
 \]
  
 The poset for this column is 
  
   \begin{center}
         \begin{tikzpicture}[scale=.5]
  \node (one) at (0,8) {$2 \ell+k^2$};
  
  \node (b) at (-5,6) {$kr+ \ell -r-1$};
  
  \node (d) at (5,6) {$ s^2-2s-2$};
  \node (c) at (-2,4) {$r^2-2r-2$};
   \node (1) at (-8,4) {$ks+ \ell -s-1$};
  \node (zero) at (1.5,2) {$rs-2-r-s$};
  \draw  (one)--(b)(one)--(d)(b)--(c)(b)--(1)(c)--(zero)(d)--(zero);
\end{tikzpicture}
    \end{center}
   
  We consider all possible rank $4$ fusions from the above poset. We will need exactly four distinct entries in this column to have a rank $4$ fusion.  We end up with four possible cases;
  \begin{enumerate}
     
      \item If $\chi_{2}=\chi_{6}$ and $\chi_{4}=\chi_{3}$ then $\ell+1=r^{2}-2r+s-ks$ and $\ell+1=s^{2}-2s+r-kr$. We combined both the conditions which leads to $k+r=3-s$.
      \item  Assume that $\chi_{2}=\chi_{6}$ and $\chi_{5}=\chi_{3}$. This happens exactly when both the conditions $\ell+1=s^{2}-2s+r-kr$ and $\ell+1=rs-r-ks$ hold. Reducing the conditions gives $k=2-s$ and $\ell=s^2-2s+rs-r-1$.
Putting these into the second and third row orthogonality conditions gives $s(s-1)(r-1)(r+s-2)=0$.
So the two cases that could give feasible fusions are $r=1$ and $r=2-s=k$. The latter case is the imprimitive association scheme and is covered under Theorem \ref{Completegraphs}. For the case when $r=1$, we used GAP to check for possible fusions on the character table and did not get any rank $4$, $(C_{3}+C_{7})+C_{5}$-isolating fusions. 
 \item Assume that $\chi_{4}=\chi_{6}$ and $\chi_{3}=\chi_{5}$. This happens exactly when $r=2-s$ and $\ell+1=rs-r-ks$. We combined both the conditions which leads to $\ell+3+s^{2}=s(3-k)$. This case is not compatible with Observations $2$ or $3$ and hence cannot give a rank $4$ fusion.
 \item The last possible case is when $\chi_{3}=\chi_{4}=\chi_{6}$. This case is also not compatible with Observations $2$ or $3$ in our list. Hence, this case cannot give a rank $4$ fusion. 
  \end{enumerate}
  Hence, only the first and second conditions can possibly lead to a rank $4$ fusion which is covered under Theorem \ref{Crazyrank4}. 
  
  Next, we consider all possible rank $3$ $(\textit{C}_{3}+\textit{C}_{7})+\textit{C}_{5}$-isolating fusions. From the above poset, we can only have one possibility. That is, if $\chi_{2}=\chi_{6}$ and $\chi_{3}=\chi_{4}=\chi_{5}$. This happens exactly when $r=1$ and $\ell+1=s^{2}-2s+r-kr$. We combined both the conditions which leads to $\ell=s^{2}-2s-k$. This fusion is only possible for the case when $r=1$. Hence, the only possible rank $3$, $(\textit{C}_{3}+\textit{C}_{7})+\textit{C}_{5}$-isolating fusion is covered in Theorem \ref{Clebsch} $(ii)$.

\end{proof}
  
\begin{obs}
    There is no possible rank $3$ or rank $4$ $C_{9}+(C_{3}+C_{7})$-isolating fusion.
    
\end{obs}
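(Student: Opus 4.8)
The plan is to reuse the strategy of Observations 1--4. If $C_9+(C_3+C_7)$ were a single relation of a fusion $\mathcal{B}$ of $H(2,\mathcal{A})$, then the matrix $M:=C_9+C_3+C_7$ would lie in the Bose--Mesner algebra of $\mathcal{B}$, so the rank of $\mathcal{B}$ is at least the number of distinct eigenvalues of $M$; and those eigenvalues are precisely the entries of the column of $\mathcal{P}(H(2,\mathcal{A}))$ obtained by adding the $C_9$ and $C_3+C_7$ columns. Hence it suffices to show this column always has at least five distinct entries, which rules out rank $3$ and rank $4$ at once.

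First I would add the two columns of the displayed character table to obtain, in the order $\chi_1,\dots,\chi_6$,
\[
\ell^2+2\ell,\qquad -r\ell-r-1,\qquad -s\ell-s-1,\qquad r^2-1,\qquad rs-1,\qquad s^2-1 .
\]
Under the standing assumptions $k>r>0>-1>s$ and $\ell>-1-s>0$ (so $\ell+1+s>0$ and $\ell+1\ge 2$), I would record the signs: $\ell^2+2\ell$ is the Perron eigenvalue of the graph with adjacency matrix $M$, which is connected because $\overline{\mathrm{X}}$ is, hence it is strictly the largest of the six; next $-s\ell-s-1=-s(\ell+1)-1>0$ and $s^2-1>0$; then $rs-1<0$ and $-r\ell-r-1<0$; while $r^2-1$ may be negative, zero, or positive.

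Next I would establish the strict inequalities that pin most of these values into a single chain: $\ell^2+2\ell>-s\ell-s-1>s^2-1$ (the two differences factor as $(\ell+1)(\ell+1+s)$ and $-s(\ell+1+s)$, both positive), $rs-1>-r\ell-r-1$ (difference $r(\ell+1+s)$), and $r^2-1>rs-1$, $r^2-1>-r\ell-r-1$ (differences $r(r-s)$ and $r(r+\ell+1)$). The only entry not fully pinned is $r^2-1$: it is forced below $\ell^2+2\ell$ and above $rs-1$ and $-r\ell-r-1$, but is a priori incomparable with $-s\ell-s-1$ and $s^2-1$. Consequently the only possible coincidences among the six entries are $r^2-1=s^2-1$ (i.e.\ $r=-s$) and $r^2-1=-s\ell-s-1$ (i.e.\ $r^2=-s(\ell+1)$).

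Finally I would observe that these two coincidences are mutually exclusive: were both to hold, then $s^2-1=-s\ell-s-1$, i.e.\ $-s(\ell+1+s)=0$, forcing $\ell+1+s=0$, which contradicts $\ell>-1-s$. Hence at most one collapse occurs, the column retains at least five distinct entries, so $M$ has at least five distinct eigenvalues, and any association scheme whose Bose--Mesner algebra contains $M$ has rank at least $5$; therefore no rank $3$ or rank $4$ $C_9+(C_3+C_7)$-isolating fusion exists. The only genuine work here is the bookkeeping --- checking each claimed strict inequality against the standing assumptions and confirming that no pair of entries other than the two listed can ever coincide --- after which the mutual-exclusivity step is one line.
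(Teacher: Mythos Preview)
Your proposal is correct and follows essentially the same approach as the paper: both compute the six entries of the $C_9+(C_3+C_7)$ column, establish the strict chain $\ell^2+2\ell>-s\ell-s-1>s^2-1>rs-1>-r\ell-r-1$ (the paper via a Hasse diagram, you via the sign analysis together with the factored differences), and observe that $r^2-1$ is the only entry not pinned into this chain. The one difference is that the paper simply appeals to the height of the poset (the five-element chain already forces at least five distinct values, regardless of where $r^2-1$ lands), whereas you go on to identify the two possible coincidences and prove them mutually exclusive; this extra step is correct but not strictly needed, since the chain alone already rules out rank $3$ and rank $4$.
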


 \begin{proof}   
    We must have the following column by itself 
in the new character table for any feasible $C_{9}+(C_{3}+C_{7})$-isolating fusion.
  \[
\begin{blockarray}{cc}
\mathbf{\textit{C}_{9}+(\textit{C}_{3}+\textit{C}_{7})} \\
\begin{block}{(c)c}
  2\ell+\ell^2 & \chi_{1} \\
  -r\ell-r-1 & \chi_{2} \\
  -s\ell-s-1 & \chi_{3} \\
  r^2-1 & \chi_{4} \\
  rs-1 & \chi_{5} \\
  s^2-1 & \chi_{6} \\
\end{block}
\end{blockarray}
 \]
 The poset for this column is
     \begin{center}
         \begin{tikzpicture}[scale=.5]
  \node (one) at (0,8) {$2 \ell +\ell ^{2}$};
  
  \node (b) at (0,6) {$-s\ell-s-1$};
  
  \node (d) at (0,4) {$s^{2}-1$};
  \node (c) at (0,2) {$rs-1$};
  \node (1) at (0,0) {$-r\ell-r-1$};
  \node (zero) at (-4,5) {$r^{2}-1$};
  \draw (c) -- (1) (zero) -- (c) -- (d) -- (b) -- (one)--(zero);
\end{tikzpicture}
    \end{center}
 Clearly from the height of the poset, there are no possible $\textit{C}_{9}+(\textit{C}_{3}+\textit{C}_{7})$-isolating fusions of rank $3$ or $4$.

\end{proof}

\begin{Lemma}\label{rank4}
The generalized Hamming scheme does not have any non-trivial fusion of rank $4$ other than the special cases mentioned in Theorems \ref{Completegraphs} and \ref{Payley}.
\end{Lemma}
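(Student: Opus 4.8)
The plan is to mirror the proof of Lemma~\ref{rank5}. After deleting the identity class $C_1$, a rank~$4$ fusion of $H(2,\mathcal{A})$ is a partition of the five non-identity basis elements $\{C_5,\, C_9,\, C_2+C_4,\, C_3+C_7,\, C_6+C_8\}$ into exactly three blocks, and by the Bannai--Muzychuk criterion (Lemma~\ref{Bannai-Muzychuk}) such a partition yields a fusion if and only if the associated row-sum matrix $P_{\uptau}$ has exactly four distinct columns. There are $25$ such partitions (the Stirling number $S(5,3)$), of two shapes: $3+1+1$ (two singleton blocks) and $2+2+1$ (one singleton block and two blocks of size two). First I would dispose of the imprimitive case --- $\mathrm{X}$ a union of complete graphs or a complete multipartite graph --- where all fusions were already listed in Theorem~\ref{Completegraphs}; for the remainder I assume $k>r>0>-1>s$ and $\ell>-1-s>0$, so that Observations~$1$--$7$ are available.

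The bulk of the cases collapse at once. One checks that every $3+1+1$ partition has at least one singleton block among $\{C_5\},\{C_9\},\{C_2+C_4\},\{C_3+C_7\}$, since the only other element, $C_6+C_8$, cannot be a singleton block twice; each such partition is then an isolating fusion of rank~$4$ for that matrix, which is forbidden by the corresponding one of Observations~$1$--$4$. This kills all $10$ partitions of shape $3+1+1$. Of the $15$ partitions of shape $2+2+1$, the $12$ whose singleton block lies in $\{C_5,C_9,C_2+C_4,C_3+C_7\}$ are excluded in the same way.

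Three partitions then remain, each having $\{C_6+C_8\}$ as its singleton block and splitting $\{C_5,C_9,C_2+C_4,C_3+C_7\}$ into two pairs. The split $\{C_5,C_9\}\cup\{C_2+C_4,C_3+C_7\}$ is precisely fusion~$(6)$ of Theorem~\ref{Payley}. The split $\{C_5,C_2+C_4\}\cup\{C_9,C_3+C_7\}$ contains the block $C_2+C_4+C_5$, so it is a $(C_2+C_4)+C_5$-isolating fusion of rank~$4$, which is impossible by Observation~$5$ (and, via the block $C_9+C_3+C_7$, also by Observation~$7$). The split $\{C_5,C_3+C_7\}\cup\{C_9,C_2+C_4\}$ contains the block $C_3+C_7+C_5$, so by Observation~$6$ it can be a rank~$4$ fusion only when $k+r=3-s$; this is fusion~$(9)$ of Theorem~\ref{Crazyrank4}, whose strongly-regular graphs have order exactly $1+k+\ell=9$, and in the primitive regime the only such graph is the Paley graph on $9$ vertices, which already satisfies the hypotheses $k=\ell$, $s=-1-r$ of Theorem~\ref{Payley}. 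Hence no parameter set appears beyond those recorded in Theorems~\ref{Completegraphs} and~\ref{Payley}. I would present all $25$ cases in a table in the format of the Lemma~\ref{rank5} table, with the supporting Observation in one column and the covering Theorem (or a dash) in the next.

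The main obstacle is this last group of three $2+2+1$ partitions, where the singleton-isolating Observations~$1$--$4$ do not apply: one must invoke Observations~$5$, $6$, $7$ and then verify, using $n=1+k+\ell$, that the $(C_3+C_7)+C_5$-isolating fusion of Theorem~\ref{Crazyrank4} contributes no new parameter family in the primitive case. Everything else is a routine bookkeeping of which of Observations~$1$--$4$ to cite.
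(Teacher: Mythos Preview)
Your proposal is correct and follows essentially the same approach as the paper: both exhaust the $25$ partitions of the five non-identity classes into three blocks (the paper's ``two isolating matrices'' and ``one isolating matrix'' tables are exactly your shapes $3{+}1{+}1$ and $2{+}2{+}1$) and dispatch all but three of them via Observations~$1$--$7$, with the survivors attributed to Theorems~\ref{Completegraphs}, \ref{Payley}, and~\ref{Crazyrank4}. The one thing you add that the paper's proof does not is the final reconciliation with the Lemma's wording: the paper's table simply cites Theorem~\ref{Crazyrank4} for the partition $\{C_6+C_8,\;C_9+(C_2+C_4),\;C_5+(C_3+C_7)\}$ and stops, relying on the commentary \emph{after} Theorem~\ref{Crazyrank4} (not on the proof of the Lemma itself) for the observation that the only primitive graph in that family is the Paley graph on $9$ vertices, hence already inside Theorem~\ref{Payley}.
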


\begin{proof}
We can divide all possible rank $4$ fusions of $H(2,\mathcal{A})$ into two categories. The first type where we have one isolated matrix and the second is where we have two isolated matrices. They are listed in the following tables respectively.

\begin{table}[ht]
\centering
 \begin{tabular}{||c c c||}
 \hline
 Rank $4$ fusions
 (One isolating matrix)& Observation & Theorem \\ [0.3ex] 
 \hline\hline
   $\{C_{5}, \ C_{9}+(C_{3}+C_{7}), \ (C_{6}+C_{8})+(C_{2}+C_{4})\}$ & 1 & - \\
 \hline
  $\{C_{5}, \ C_{9}+(C_{6}+C_{8}), \ (C_{3}+C_{7})+(C_{2}+C_{4})\}$ & 1 & - \\
 \hline
  $\{C_{5}, \ C_{9}+(C_{2}+C_{4}), \ (C_{6}+C_{8})+(C_{3}+C_{7})\}$ & 1 & - \\
 \hline
    $\{C_{9}, \ C_{5}+(C_{2}+C_{4}), \ (C_{6}+C_{8})+(C_{3}+C_{7})\}$ & 2 & - \\
 \hline
$\{C_{9}, \ C_{5}+(C_{6}+C_{8}), \ (C_{3}+C_{7})+(C_{2}+C_{4})\}$ & 2 & - \\
 \hline
 $\{C_{9}, \ C_{5}+(C_{3}+C_{7}), \ (C_{6}+C_{8})+(C_{2}+C_{4})\}$ & 2 & - \\
 \hline
  $\{(C_{2}+C_{4}), \ C_{5}+(C_{6}+C_{8}), \ C_{9}+(C_{3}+C_{7}),\}$ & 3 & - \\
 \hline
 $\{(C_{2}+C_{4}), \ C_{5}+C_{9}, \ (C_{3}+C_{7})+(C_{6}+C_{8})\}$ & 3 & - \\
 \hline
$\{(C_{2}+C_{4}), \ C_{9}+(C_{6}+C_{8}), \ C_{5}+(C_{3}+C_{7})\}$ & 3 & - \\
 \hline
  $\{(C_{3}+C_{7}), \ (C_{6}+C_{8})+(C_{2}+C_{4}), \ C_{5}+C_{9}\}$ & 4 & - \\
 \hline
  $\{(C_{3}+C_{7}), \ (C_{6}+C_{8})+C_{9}, \ (C_{2}+C_{4})+C_{5}\}$ & 4 & - \\
 \hline
  $\{(C_{3}+C_{7}), \ (C_{6}+C_{8})+C_{5}, \ (C_{2}+C_{4})+C_{9}\}$ & 4 & - \\
 \hline
 $\{C_{6}+C_{8}, \ C_{5}+C_{9}, \ (C_{2}+C_{4})+(C_{3}+C_{7})\}$ & - & \ref{Payley}  \\ 
 \hline
  $\{C_{6}+C_{8}, \ C_{5}+(C_{2}+C_{4}), \ C_{9}+(C_{3}+C_{7})\}$ & 5,\ 7 & -  \\ 
 \hline
   $\{C_{6}+C_{8}, \ C_{9}+(C_{2}+C_{4}), \ C_{5}+(C_{3}+C_{7})\}$ & 6 & \ref{Crazyrank4} \\ 
 \hline
\end{tabular}
\caption{Rank 4 fusions with one isolating matrix}
\end{table}

\begin{table}[H]
\centering
 \begin{tabular}[t]{||c c c||}
 \hline
 Rank $4$ fusions
 (Two isolating matrices)& Observation & Theorem \\ [0.3ex] 
 \hline\hline
 $\{C_{5}, \ C_{9}, \ C_{2}+C_{4}+C_{3}+C_{7}+C_{6}+C_{8}\}$ & 1 & -  \\ 
 \hline
  $\{C_{5}, \ C_{2}+C_{4}, \ C_{9}+C_{3}+C_{7}+C_{6}+C_{8}\}$ & Imprimitive case & \ref{Completegraphs}(i)  \\ 
 \hline
 $\{C_{5}, \ C_{3}+C_{7}, \ C_{2}+C_{4}, \ C_{9}+C_{6}+C_{8}\}$  & 1 & -  \\
 \hline
   $\{C_{5}, \ C_{6}+C_{8}, \ C_{2}+C_{4}+C_{9}+C_{3}+C_{7}\}$ & 1 & -  \\
 \hline
  $\{C_{9}, \ C_{2}+C_{4}, \ C_{5}+C_{3}+C_{7}+C_{6}+C_{8}\}$ & 2 & - \\
 \hline
   $\{(C_{9}, \ C_{3}+C_{7}, \ C_{5}+C_{2}+C_{4}+C_{6}+C_{8}\}$ & Imprimitive Case & \ref{Completegraphs}(ii) \\
 \hline
 $\{(C_{9}, \ C_{6}+C_{8}, \ C_{5}+C_{2}+C_{4}+C_{3}+C_{7}\}$  & 2 & - \\
 \hline
$\{C_{2}+C_{4}, \ C_{3}+C_{7}, \ C_{9}+
 C_{5}+C_{2}+C_{4}\}$  & 3 & - \\
 \hline
 $\{C_{2}+C_{4}, \ C_{6}+C_{8}, \ C_{3}+C_{7}+C_{9}+C_{5}\}$ & 3 & - \\
 \hline
 $\{C_{3}+C_{7}, \ C_{6}+C_{8}, \ C_{2}+C_{4}+C_{9}+C_{5}\}$ & 4 & - \\
 \hline
\end{tabular}
\caption{Rank 4 fusions with two isolating matrices}
\end{table}

We used Observations $1-7$ for ruling out possible rank $4$ fusions. Hence, the only possible rank $4$ fusions of the generalized Hamming scheme are the ones coming from special cases.

\end{proof}


Finally, we start by observing the conditions on eigenvalues for possible rank $3$ fusions.

\medskip
\begin{obs}

     A rank $3$ or rank $4$, $(C_{5}+C_{9})$-isolating fusion is possible if and only if $k=\ell$ and $r=-1-s$. This condition is equivalent to $\chi_{2}(C_{5}+C_{9})=\chi_{4}(C_{5}+C_{9})=\chi_{6}(C_{5}+C_{9})$ and $\chi_{3}(C_{5}+C_{9})=\chi_{5}(C_{5}+C_{9})$. The only possible rank $3$ and rank $4$ $(C_{5}+C_{9})$-isolating fusions are covered under Theorem \ref{Payley}.
    
\end{obs}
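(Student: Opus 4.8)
The plan is to follow the pattern of Observations~1--7. A $(C_{5}+C_{9})$-isolating fusion must turn the $(C_{5}+C_{9})$-column of $\mathcal{P}(H(2,\mathcal{A}))$ into a column by itself, so the first step is to record its entries,
\[
\chi_{1}=k^{2}+\ell^{2},\quad \chi_{2}=kr-\ell(1+r),\quad \chi_{3}=ks-\ell(1+s),
\]
\[
\chi_{4}=r^{2}+(1+r)^{2},\quad \chi_{5}=2rs+r+s+1,\quad \chi_{6}=s^{2}+(1+s)^{2},
\]
and to pin down which of them can coincide. Writing the entry indexed by a pair $\{i,j\}$ as $\tfrac12(v_{i}v_{j}+u_{i}u_{j})$ with $(v_{0},v_{1},v_{2})=(k+\ell,-1,-1)$ and $(u_{0},u_{1},u_{2})=(k-\ell,2r+1,2s+1)$ makes the pairwise differences factor, and in the primitive range $k>r>0>-1>s$, $\ell>-1-s>0$ assumed before the Observations one gets $\chi_{4}-\chi_{5}=(r-s)(2r+1)>0$, $\chi_{6}-\chi_{5}=(r-s)(-2s-1)>0$, $\chi_{4}-\chi_{6}=2(r-s)(r+s+1)$, and $\chi_{2}-\chi_{3}=(r-s)(k-\ell)$. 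Hence $\chi_{5}$ lies strictly below both $\chi_{4}$ and $\chi_{6}$, the valency $\chi_{1}$ is strictly the largest entry, $\chi_{4}=\chi_{6}$ if and only if $r=-1-s$, and $\chi_{2}=\chi_{3}$ if and only if $k=\ell$.

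By Lemma~\ref{Bannai-Muzychuk}, a $(C_{5}+C_{9})$-isolating fusion of rank $3$ or $4$ forces $\chi_{2},\dots,\chi_{6}$ to take at most three distinct values, so next I would split on whether $\chi_{4}=\chi_{6}$. If $\chi_{4}\neq\chi_{6}$, then $\chi_{4},\chi_{5},\chi_{6}$ are already three distinct values, so $\chi_{2}$ and $\chi_{3}$ must each lie in $\{\chi_{4},\chi_{5},\chi_{6}\}$; there are only finitely many such placements, and each is combined with the row-orthogonality relation $1+\tfrac{rs}{k}+\tfrac{(1+r)(1+s)}{\ell}=0$ of Section~\ref{Sec:Preliminaries} to cut the character table down to at most one free parameter, after which the GAP code referenced in Theorem~\ref{Completegraphs} is used to verify that none of these one-parameter families admits a rank-$3$ or rank-$4$ fusion, exactly the type of single-parameter check performed in Theorems~\ref{News}--\ref{Clebsh}. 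If $\chi_{4}=\chi_{6}$, then $s=-1-r$, the orthogonality relation becomes $k\ell=(r+r^{2})(k+\ell)$, and $\chi_{4}(=\chi_{6})$ and $\chi_{5}$ are the only two values among $\chi_{4},\chi_{5},\chi_{6}$, so $\chi_{2},\chi_{3}$ may add at most one more. The clean sub-case is $\chi_{2}=\chi_{3}$, i.e.\ $k=\ell$, which together with $k\ell=(r+r^{2})(k+\ell)$ gives $k=\ell=2(r+r^{2})$ -- precisely the hypotheses of Theorem~\ref{Payley}; here one checks $\chi_{2}=\chi_{3}=\chi_{5}=-k$ and $\chi_{4}=\chi_{6}=2r^{2}+2r+1$, so the column has exactly the three values $k^{2}+\ell^{2}$, $2r^{2}+2r+1$, $-k$, consistent with both a rank-$3$ and a rank-$4$ fusion. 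The remaining sub-case, $\chi_{2}\neq\chi_{3}$ with one of $\chi_{2},\chi_{3}$ equal to $\chi_{4}$ or $\chi_{5}$, is again reduced (using $k\ell=(r+r^{2})(k+\ell)$ and $k\neq\ell$) to a one-parameter character table and dispatched with GAP.

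For the converse, when $k=\ell$ and $r=-1-s$ Theorem~\ref{Payley} already exhibits the rank-$4$ fusion $(6)=\{C_{1},\,C_{2}+C_{4}+C_{3}+C_{7},\,C_{6}+C_{8},\,C_{5}+C_{9}\}$ and the rank-$3$ fusion $(7)=\{C_{1},\,C_{2}+C_{4}+C_{3}+C_{7}+C_{6}+C_{8},\,C_{5}+C_{9}\}$, both containing $C_{5}+C_{9}$ as a class and, by that theorem, the only such fusions; this also supplies the reverse implication and the stated identification. The hard part will be the branches in which $\chi_{2}$ or $\chi_{3}$ is pinned to one of $\chi_{4},\chi_{5},\chi_{6}$ without forcing both $r=-1-s$ and $k=\ell$: the factorizations above make every such branch a character table in a single parameter, but one still has to enumerate all the placements and confirm that the column having few distinct entries never actually upgrades to a rank-$3$ or rank-$4$ fusion there, which is where the GAP verification (as in Theorems~\ref{Completegraphs}--\ref{Clebsh}) does the real work.
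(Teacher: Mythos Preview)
Your outline is correct, but the route differs from the paper's in one important respect: the paper uses no GAP at all for this observation; every branch is disposed of by a one-line algebraic contradiction.

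After your (correct) observation that $\chi_{5}$ lies strictly below both $\chi_{4}$ and $\chi_{6}$, the paper notes that a rank-$3$ fusion allows at most \emph{two} distinct values among $\chi_{2},\dots,\chi_{6}$, so $\chi_{4}=\chi_{6}$ (i.e.\ $r=-1-s$) is forced immediately; your $\chi_{4}\neq\chi_{6}$ branch is therefore vacuous for rank~$3$ and need not be sent to GAP. With $r=-1-s$ fixed, $\chi_{2}$ and $\chi_{3}$ must each equal one of $\chi_{4}(=\chi_{6})$ or $\chi_{5}$, giving exactly four cases, and the paper kills three of them by hand: $\chi_{2}=\chi_{4},\ \chi_{3}=\chi_{5}$ yields $k+\ell=-1$; $\chi_{2}=\chi_{3}=\chi_{4}=\chi_{6}$ yields $k=\ell=-(2r^{2}+2r+1)<0$; $\chi_{2}=\chi_{5},\ \chi_{3}=\chi_{4}$ yields $k+\ell=(r+s)^{-1}$. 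The surviving case $\chi_{2}=\chi_{3}=\chi_{5}$ gives $k=\ell$ and hence Theorem~\ref{Payley}, exactly as you found. So your ``reduce to one parameter and dispatch with GAP'' step in the $\chi_{2}\neq\chi_{3}$ sub-case is replaced in the paper by those two add-the-equations contradictions.

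For rank~$4$, the paper does not argue from this column alone; it simply records that the rank-$4$ $(C_{5}+C_{9})$-isolating fusion in Theorem~\ref{Payley} is obtained by combining this observation with the later Observations on the companion columns $C_{6}+C_{8}$ and $(C_{2}+C_{4})+(C_{3}+C_{7})$. Your attempt to settle the rank-$4$ case directly from the $(C_{5}+C_{9})$ column (again via GAP) is therefore more than the paper actually does here. In short, your plan would work, but the paper's argument is shorter, entirely elementary, and avoids any computer verification for this observation.
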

\begin{proof}
    We must have the following sum of columns by itself 
in the new character table for any feasible $(C_{5}+C_{9})$-isolating fusion.
    
  \[
\begin{blockarray}{cc}
\mathbf{\textit{C}_{5}+\textit{C}_{9}} \\
\begin{block}{(c)c}
  \ell^2+k^2 & \chi_{1} \\
  kr-\ell(r+1) & \chi_{2} \\
  ks-\ell(s+1) & \chi_{3} \\
  r^2+(1+r)^{2} & \chi_{4} \\
  2rs+r+s+1 & \chi_{5} \\
  s^2+(1+s)^{2}& \chi_{6} \\
\end{block}
\end{blockarray}
 \]
  
  The poset for this column is
  
   \begin{center}
         \begin{tikzpicture}[scale=.5]
  \node (one) at (0,8) {$ \ell^2+k^2$};
  
  \node (b) at (0,4) {$2rs+r+s+1$};
  \node (c) at (-8,6) {$kr-\ell (1+r)$};
   \node (f) at (8,6) {$ks-\ell (1+s)$};
  \node (d) at (-3,6) {$r^2+(1+r)^2 $};
   \node (e) at (3,6) {$s^2+(1+s)^2 $};
  \draw       (one)-- (d) (one) --(e) (b)--(d) (b) --(e) (one)--(f) (one) -- (c)    ;
\end{tikzpicture}
    \end{center}

   From the poset diagram, it is clear that a rank $3$ fusion is only possible if $r=-1-s$ implying $\chi_{4}(C_{5}+C_{9})=\chi_{6}(C_{5}+C_{9})$. We can have the following four cases:
   \begin{enumerate}
     \item Assume that $\chi_{2}=\chi_{4}=\chi_{6}$ and $\chi_{3}=\chi_{5}$. This happens exactly when $ks+\ell r=2rs=-2r-2r^2$ and $kr+\ell s=1+2s+2s^2$. Adding both equations, we get $k+\ell=-1$ which gives us a contradiction. 
      \item If $\chi_{2}=\chi_{3}=\chi_{4}=\chi_{6}$ then $k=\ell=-(2r^{2}+2r+1)$, which is a contradiction since $k>0$.
   \item Assume that $\chi_2=\chi_3=\chi_5$ and  $\chi_4=\chi_6$. This happens exactly when the conditions $r=-1-s$ and $k=\ell$ hold simultaneously. Both these conditions imply $k=\ell=2(r+r^2)$.  Hence, we get a rank $3$ fusion covered in Theorem \ref{Payley}. 
     
       \item If $\chi_{2}=\chi_{5}$ and $\chi_{3}=\chi_{4}=\chi_{6}$ then $kr+\ell s=2rs=-2r-2r^2$ and $ks+\ell r=1+2r+2r^2$. Adding both equations, we get $k+\ell=(r+s)^{-1}$ which is a contradiction.

   \end{enumerate}

 Hence, we can conclude that the only rank $3$ fusion for this case is covered by Theorem \ref{Payley}. This Observation combined together with two later Observations also gives a rank $4$ fusion listed in Theorem \ref{Payley}.
  
 \end{proof}

\medskip

\medskip    
 \begin{obs}
 There is no possible rank $3$ $(C_{6}+C_{8})+C_{5}$-isolating fusion of $H(2,\mathcal{A})$.
 \end{obs}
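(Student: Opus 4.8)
The plan is to mimic the template of Observations~$1$--$11$: exhibit the column of $\mathcal{P}(H(2,\mathcal{A}))$ indexed by the candidate isolated class $C_{5}+C_{6}+C_{8}$, determine the partial order on its six entries, and count how many of them are forced to be pairwise distinct. As in Observations~$1$--$4$, if $C_{5}+C_{6}+C_{8}$ is a class of a fusion $\mathcal{B}$, then entries of this column lying in a common eigenspace of $\mathcal{B}$ must agree, so the number of distinct entries in the column is a lower bound for the rank of $\mathcal{B}$; it therefore suffices to produce four pairwise distinct entries.

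First I would compute the column by adding, entrywise, the $C_{5}$ and $C_{6}+C_{8}$ columns of $\mathcal{P}(H(2,\mathcal{A}))$; for the rows indexed by $\chi_{1},\dots,\chi_{6}$ this gives, respectively,
\[
k^{2}+2k\ell,\qquad \ell r-k,\qquad \ell s-k,\qquad -(r^{2}+2r),\qquad -(rs+r+s),\qquad -(s^{2}+2s).
\]
Next, using the standing assumptions $k>r>0>-1>s$ and $\ell>-1-s>0$, I would establish the strict chain $\chi_{1}>\chi_{5}>\chi_{6}>\chi_{3}$. The inequality $\chi_{1}>\chi_{5}$ is immediate since $\chi_{1}$ is the valency of the graph $\mathrm{X}_{\mathcal{I}}$ and hence the strictly largest entry of every column of the character table. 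For the other two: $\chi_{5}-\chi_{6}=-(rs+r+s)+(s^{2}+2s)=(s+1)(s-r)>0$ because $s+1<0$ and $s-r<0$; and $\chi_{6}-\chi_{3}=-(s^{2}+2s)-(\ell s-k)=k-s(\ell+s+2)>0$ because $\ell+s+2>1>0$, $s<0$ and $k>0$.

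Since $\chi_{1},\chi_{5},\chi_{6},\chi_{3}$ are then four pairwise distinct entries of this column, any $C_{5}+C_{6}+C_{8}$-isolating fusion must have rank at least $4$, so no such fusion of rank $3$ exists. To match the style of the surrounding Observations I would also draw the Hasse diagram of the full partial order on the six entries, recording that the only remaining incomparable pairs (for instance $\chi_{2}$ against each of $\chi_{4},\chi_{5},\chi_{6}$, and $\chi_{3}$ against $\chi_{4}$) are the ones that could conceivably coincide; this is the information that feeds the later rank-$4$ discussion in Lemma~\ref{rank4}. I do not anticipate any real obstacle: the only care needed is the sign bookkeeping in $\chi_{5}-\chi_{6}$ and $\chi_{6}-\chi_{3}$, and invoking the ``number of distinct column entries is a lower bound on the rank'' principle exactly as in Observations~$1$--$4$ (equivalently, via Lemma~\ref{Bannai-Muzychuk}, since $\mathrm{X}_{\mathcal{I}}$ would then have at least four distinct eigenvalues).
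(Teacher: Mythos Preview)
Your argument is correct and uses the same overall method as the paper---compute the $C_{5}+(C_{6}+C_{8})$ column and read off a lower bound on the rank from the partial order on its six entries---but the execution differs in two noteworthy ways. First, your entries $\chi_{2}=\ell r-k$ and $\chi_{3}=\ell s-k$ are the correct sums of the $C_{5}$ and $C_{6}+C_{8}$ columns; the paper's proof records them as $-r\ell-k$ and $-s\ell-k$, a sign slip that interchanges the roles of $\chi_{2}$ and $\chi_{3}$ in the Hasse diagram (the conclusion there survives only because the remaining case analysis happens to be symmetric under that swap). Second, the paper argues by drawing the poset, observing that the only way to collapse to three values is $\chi_{5}=\chi_{3}$ together with $\chi_{2}=\chi_{4}=\chi_{6}$, and then deriving a contradiction from $\ell>1+r$; your chain $\chi_{1}>\chi_{5}>\chi_{6}>\chi_{3}$ bypasses that case analysis entirely and gives the rank~$\ge 4$ bound in one stroke. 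Either route suffices for the Observation as stated; yours is shorter and avoids the sign issue.
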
   

\begin{proof}
   We must have the following sum of two columns by itself 
in the new character table for any feasible $(C_{6}+C_{8})+C_{5}$-isolating fusion.
    
  \[
\begin{blockarray}{cc}
\mathbf{(\textit{C}_{6}+\textit{C}_{8})+\textit{C}_{5}} \\
\begin{block}{(c)c}
  2k\ell+k^2 & \chi_{1} \\
  -r\ell-k & \chi_{2} \\
  -s\ell-k & \chi_{3} \\
  -r^2-2r& \chi_{4} \\
  -rs-r-s & \chi_{5} \\
  -s^2-2s & \chi_{6} \\
\end{block}
\end{blockarray}
 \]
 
 The poset for this column is
   \begin{center}
         \begin{tikzpicture}[scale=.5]
  \node (one) at (0,6) {$2k \ell +k^2$};
  \node (b) at (0,4) {$-rs-r-s$};
   \node (a) at (5,4) {$-s \ell-k$};
    \node (c) at (5,2) {$-r \ell-k$};
     \node (d) at (2,2) {$-s^2-2s$};
  \node (zero) at (-2,2) {$-r^2-2r$};
  \draw  (one) -- (b) -- (d) (b) -- (zero) (one)--  (a)-- (c);
\end{tikzpicture}
    \end{center}
 As clear from the above lattice, there is only one possible case for a possible rank $3$ fusion.
  \begin{enumerate}
      
       \item If $\chi_{5}=\chi_{3}$ and $\chi_{2}=\chi_{4}=\chi_{6}$ then we must have $r^{2}-2r=r\ell+k$ which implies $r^{2}=r\ell+k+2r$. This is a contradiction since $\ell>1+r$. Hence, this case cannot give a rank $3$ fusion.
  \end{enumerate}
      
There are no possible rank $3$ fusions in this case.

 \end{proof}

\medskip

 \medskip
 
 \begin{obs}

     A rank $3$ $C_{9}+(C_{2}+C_{4})$-isolating fusion is possible if and only if $s=-2$, $k=r(3+r)$ and $\ell=3+r$. This condition is equivalent to $\chi_{2}=\chi_{5}=\chi_{6}$ and $\chi_{3}=\chi_{4}$. The only possible rank $3$ $C_{9}+(C_{2}+C_{4})$-isolating fusion is covered under Theorem \ref{Clebsch}.

 \end{obs}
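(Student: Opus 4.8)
The plan is to follow the template of the earlier Observations. First I would write down the column of $\mathcal{P}(H(2,\mathcal{A}))$ attached to the relation $C_{9}+(C_{2}+C_{4})$ by adding the $C_{9}$- and $(C_{2}+C_{4})$-columns of the character table, obtaining $\chi_{1}=\ell^{2}+2k$, $\chi_{2}=k+r-\ell(1+r)$, $\chi_{3}=k+s-\ell(1+s)$, $\chi_{4}=(1+r)^{2}+2r$, $\chi_{5}=(1+r)(1+s)+r+s$, and $\chi_{6}=(1+s)^{2}+2s$. Since $A_{\mathcal{I}}=C_{9}+(C_{2}+C_{4})$ is literally the same matrix in $H(2,\mathcal{A})$ and in any rank~$3$ fusion of it, and in a rank~$3$ scheme it can have at most three distinct eigenvalues, a rank~$3$ $(C_{9}+(C_{2}+C_{4}))$-isolating fusion can exist only if this column has at most three distinct entries; as $\chi_{1}$ is the valency of this relation and hence (as in the preceding Observations) the strict maximum of the column, this is exactly the requirement that $\chi_{2},\dots,\chi_{6}$ take precisely two distinct values.

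Next I would record two forced non-coincidences that drastically shorten the case list. Direct computation gives $\chi_{3}-\chi_{2}=(r-s)(\ell-1)$ and $\chi_{4}-\chi_{5}=(r-s)(r+2)$, and under the standing hypotheses of this section ($k>r>0>-1>s$ and $\ell\ge 2$, the value $\ell=1$ being excluded since it would make $\mathrm{X}$ complete multipartite with $r=0$) both factors are nonzero, so $\chi_{2}\neq\chi_{3}$ and $\chi_{4}\neq\chi_{5}$ always hold. Partitioning $\{\chi_{2},\dots,\chi_{6}\}$ into two equality classes while keeping $\chi_{2}$ apart from $\chi_{3}$ and $\chi_{4}$ apart from $\chi_{5}$ leaves exactly four admissible patterns: (A) $\chi_{2}=\chi_{4}=\chi_{6}$ with $\chi_{3}=\chi_{5}$; (B) $\chi_{2}=\chi_{5}=\chi_{6}$ with $\chi_{3}=\chi_{4}$; (C) $\chi_{3}=\chi_{4}=\chi_{6}$ with $\chi_{2}=\chi_{5}$; (D) $\chi_{3}=\chi_{5}=\chi_{6}$ with $\chi_{2}=\chi_{4}$. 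This step plays the role of the Hasse-diagram argument in the other Observations, with $\chi_{1}$ always on top and no coarser collapse feasible.

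Then I would dispatch the four patterns, using the row-orthogonality relations from Section~\ref{Sec:Preliminaries} to close each system. For (B): $\chi_{5}=\chi_{6}$ factors as $(r-s)(2+s)=0$, so $s=-2$; then $\chi_{3}=\chi_{4}$ becomes $k+\ell=(r+1)(r+3)$ and $\chi_{2}=\chi_{5}$ becomes $\ell(1+r)=k+r+3$, and eliminating $k$ yields $\ell(r+2)=(r+3)(r+2)$, hence $\ell=r+3$ and $k=r(r+3)$ --- precisely the parameter set of Theorem~\ref{Clebsch}, with the coincidences $\chi_{2}=\chi_{5}=\chi_{6}=-3$ and $\chi_{3}=\chi_{4}=r^{2}+4r+1$. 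For (A) and (C): the equality $\chi_{4}=\chi_{6}$ forces $r+s=-4$, and feeding this into the remaining equalities produces a relation whose left-hand side is a strictly positive combination of $k$, $\ell$ and $3+r$ while its right-hand side is strictly negative (in (C) one first obtains $\ell=r+3$ and then $k=-r-4<0$), so neither is possible. For (D): $\chi_{5}=\chi_{6}$ again gives $s=-2$, after which $\chi_{3}=\chi_{5}$ reads $k+\ell=-1$, again impossible. Hence (B) is the only surviving pattern, establishing necessity; conversely Theorem~\ref{Clebsch} (where the partition is verified via Lemma~\ref{Bannai-Muzychuk} in GAP) exhibits the rank~$3$ fusion $\{C_{1},\,C_{2}+C_{4}+C_{9},\,C_{3}+C_{7}+C_{5}+C_{6}+C_{8}\}$ at these parameters, which gives the converse.

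The main obstacle is purely the combinatorial bookkeeping in the second and third steps: confirming that the four patterns above genuinely exhaust the ways the column can drop to three distinct values, and checking that each of (A), (C), (D) is eliminated by the positivity of $k$ and $\ell$. The individual algebraic eliminations within each pattern are short and routine, and the converse direction is already supplied by Theorem~\ref{Clebsch}.
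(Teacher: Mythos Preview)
Your proposal is correct and follows essentially the same strategy as the paper: write down the $C_{9}+(C_{2}+C_{4})$ column, argue that a rank~$3$ isolating fusion forces this column to take exactly three distinct values, enumerate the admissible equality patterns, and eliminate all but the one leading to Theorem~\ref{Clebsch}. The only minor difference is in the case reduction: the paper uses its Hasse diagram, which encodes the \emph{strict} inequalities $\chi_{3}>\chi_{2}$ and $\chi_{4}>\chi_{5}$, to cut the list down to just two patterns (your (B) and (C)); you instead record only the non-coincidences $\chi_{2}\neq\chi_{3}$ and $\chi_{4}\neq\chi_{5}$, leaving four patterns (A)--(D), and then dispatch the two extra patterns (A) and (D) algebraically. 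Both routes are valid; the paper's poset is slightly quicker since (A) and (D) are already ruled out by the chain $\chi_{3}>\chi_{2}=\chi_{4}>\chi_{5}=\chi_{3}$, whereas your version trades the order argument for two short positivity checks.
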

 
\begin{proof}
We must have the following sum of columns by itself 
in the new character table for any feasible $C_{9}+(C_{2}+C_{4})$-isolating fusion.
 \[
\begin{blockarray}{cc}
\mathbf{\textit{C}_{9}+(\textit{C}_{2}+\textit{C}_{4})} \\
\begin{block}{(c)c}
  \ell^2+2k & \chi_{1} \\
  k+r-\ell-r\ell & \chi_{2} \\
  k+s-\ell-s\ell  & \chi_{3} \\
  r^2+4r+1& \chi_{4} \\
  rs+2r+2s+1 & \chi_{5} \\
  s^2+4s+1 & \chi_{6} \\
\end{block}
\end{blockarray}
 \]
 
 The poset for this column is
  \begin{center}
         \begin{tikzpicture}[scale=.5]
  \node (one) at (0,6) {$2k +\ell^2$};
  \node (b) at (0,4) {$k+s-\ell-s\ell$};
   \node (a) at (0,2) {$k+r-\ell-r\ell$};
    \node (c) at (-6,4) {$r^2+4r+1$};
     \node (d) at (6,4) {$s^2+4s+1$};
  \node (zero) at (-6,2) {$rs+2r+2s+1$};
  \draw  (one) -- (b) -- (a) (c) -- (zero) (one) -- (c) (one) --(d) ;
\end{tikzpicture}
    \end{center}
 From the above poset, if a rank $3$ fusion is possible then one the following cases must hold:
 \begin{enumerate}
     \item  The first possible case is when $\chi_{4}=\chi_{6}=\chi_{3}$ and $\chi_{2}=\chi_{5}$. This happens if and only if $k-\ell=1+3r+r\ell+r^2$ and $k-\ell=1+2r+rs+s+s\ell$. We combined these two equalities and obtained $r+r^2+r\ell=rs+s+s\ell
     $, which gives us a sign contradiction. Hence, this case cannot give a rank $3$ fusion.

      \item  If $\chi_{2}=\chi_{6}=\chi_{5}$ and $\chi_{3}=\chi_{4}$ then $\chi_{5}=\chi_{6}$ implies $s=-2$ and $\chi_{3}=\chi_{4}$ gives $k-\ell=r^2+4r+3-2 \ell$. We combined these two equalities with $\chi_{2}=\chi_{6}$ and obtained $-3-r-r\ell=r^2+4r+3-2\ell$, which gives $\ell=3+r$.
      
 \end{enumerate}

The conditions obtained in (b) gives a rank $3$ fusion when $\chi_{2}=\chi_{6}=\chi_{5}$ and $\chi_{3}=\chi_{4}$ implies $\ell=3+r$, $s=-2$ and $k=3r+r^2$. This fusion is covered under Theorem \ref{Clebsch}. 

We have ruled out possible rank $4$ fusions in Observations $1-7$, this case along with Observation $6$ leads to a rank $4$ fusion covered under Theorem \ref{Crazyrank4}.
 \end{proof}
 \medskip

 \begin{obs}
 There are no possible rank $3$ $C_{9}+(C_{6}+C_{8})$-isolating fusions of $H(2,\mathcal{A})$.
 \end{obs}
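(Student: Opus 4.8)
The plan is to mirror the preceding Observations. First I would form the column of $\mathcal{P}(H(2,\mathcal{A}))$ attached to the merged matrix $C_9+C_6+C_8$ by adding the $C_9$ column to the $C_6+C_8$ column entry by entry. Using the cancellations $(1+r)^2-2r(1+r)=1-r^2$, $(1+r)(1+s)-(r+s+2rs)=1-rs$, and $(1+s)^2-2s(1+s)=1-s^2$, this column is
\[
\chi_1=\ell^2+2k\ell,\qquad \chi_2=-(k+\ell+kr),\qquad \chi_3=-(k+\ell+ks),
\]
\[
\chi_4=1-r^2,\qquad \chi_5=1-rs,\qquad \chi_6=1-s^2,
\]
where, as recorded in the setup, $\chi_1$ is the valency of $\mathrm{X}_{\mathcal{I}}$ and is strictly the largest entry of the column.

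Next I would draw the poset of these six entries under the standing hypotheses $k>r>0>-1>s$ and $\ell>-1-s>0$, and read off from it the chain $\chi_1>\chi_5>\chi_4>\chi_2$. The last two inequalities are the substantive ones and follow from
\[
\chi_5-\chi_4=(1-rs)-(1-r^2)=r(r-s)>0,\qquad \chi_4-\chi_2=(1-r^2)+(k+\ell+kr)=1+k+\ell+r(k-r)>0,
\]
valid because $r>0$, $r>s$, and $k>r$. Hence the isolated column has at least four distinct entries, so — exactly as in the previous Observations, via Lemma~\ref{Bannai-Muzychuk} — any fusion of $H(2,\mathcal{A})$ in which $C_9+C_6+C_8$ is a relation has rank at least $4$. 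In particular there is no rank $3$ $C_9+(C_6+C_8)$-isolating fusion, which is the claim.

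The column computation is routine and I do not expect a real obstacle; the one point to be careful about is that, unlike in some earlier Observations, the signs of $\chi_3$ and even of $\chi_4$ are \emph{not} determined in general — for instance $\chi_4=1-r^2>0$ for the pentagon, a conference graph — so the required inequalities cannot be read off a naive sign chart. The argument must therefore rest only on the two differences $\chi_5-\chi_4$ and $\chi_4-\chi_2$, which are positive for every primitive strongly-regular graph.
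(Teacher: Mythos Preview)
Your argument is correct and matches the paper's proof: both compute the merged column, exhibit the chain $\chi_1>\chi_5>\chi_4>\chi_2$ of four strictly distinct values, and conclude via the Bannai--Muzychuk criterion that no rank~$3$ fusion can isolate $C_9+(C_6+C_8)$. The paper presents this via a Hasse diagram and the bare assertion ``As clear from the poset above,'' whereas you supply the explicit differences $\chi_5-\chi_4=r(r-s)$ and $\chi_4-\chi_2=1+k+\ell+r(k-r)$; your added remark about the indeterminate signs of $\chi_3$ and $\chi_4$ is a nice clarification but not something the paper discusses.
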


\begin{proof} 

 We must have the following sum of columns by itself 
in the new character table for any feasible $C_{9}+(C_{6}+C_{8})$-isolating fusion.
  \[
\begin{blockarray}{cc}
\mathbf{\textit{C}_{9}+(\textit{C}_{6}+\textit{C}_{8})} \\
\begin{block}{(c)c}
  2k\ell+\ell^2 & \chi_{1} \\
  -rk-k-\ell & \chi_{2} \\
  -sk-k-\ell & \chi_{3} \\
  -r^2+1 & \chi_{4} \\
  -rs+1 & \chi_{5} \\
  -s^2+1 & \chi_{6} \\
\end{block}
\end{blockarray}
 \]
 The poset for this column is
    \begin{center}
         \begin{tikzpicture}[scale=.5]
  \node (one) at (0,8) {$2k \ell +\ell ^{2}$};
  
  \node (b) at (6,6) {$-sk-k-\ell$};
  
  \node (d) at (6,2) {$-rk-k-\ell$};
  \node (c) at (0,6) {$-rs+1$};
  \node (1) at (-4,4) {$-s^2+1$};
  \node (zero) at (0,4) {$-r^2+1$};
  \draw (c)--(1) (zero) -- (c)--(one)-- (b)--(d) (zero)--(d);
\end{tikzpicture}
    \end{center}
As clear from the poset above. There are no possible rank $3$ fusions for this case.
 \end{proof}

\medskip

 \begin{obs}
 A rank $3$ $(C_{6}+C_{8})$-isolating fusion is possible if and only if $k=\ell$ and $r=-1-s$. This condition is equivalent to $\chi_{2}=\chi_{3}=\chi_{4}=\chi_{6}$. The only possible rank $3$ $(C_{6}+C_{8})$-isolating fusion is covered under Theorem \ref{Payley}. The two possible rank $4$ $(C_{6}+C_{8})$-isolating fusions are covered in Theorem \ref{Crazyrank4} and \ref{Payley}.
 \end{obs}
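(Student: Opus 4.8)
\emph{Proof proposal (plan).} The plan is to follow the template of Observations~1--11. First I would write down the column of $\mathcal{P}(H(2,\mathcal{A}))$ indexed by $C_6+C_8$, namely the vector with entries $2k\ell$, $-k(1+r)+\ell r$, $-k(1+s)+\ell s$, $-2r(1+r)$, $-r-s-2rs$, $-2s(1+s)$ on the rows $\chi_1,\dots,\chi_6$. Using the standing hypotheses $k>r>0>-1>s$ and $\ell>-1-s>0>-1>-1-r$ I would record that $\chi_1=2k\ell$ is the degree of $C_6+C_8$, hence strictly its largest eigenvalue, that $\chi_5=-r-s-2rs>0$ while $\chi_4=-2r(1+r)<0$ and $\chi_6=-2s(1+s)<0$, and that $\chi_2,\chi_3$ have no forced sign. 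The two identities I would use are $\chi_2-\chi_3=(r-s)(\ell-k)$ and $\chi_4-\chi_6=-2(r-s)(r+s+1)$, so that, since $r\ne s$, one has $\chi_2=\chi_3$ iff $k=\ell$ and $\chi_4=\chi_6$ iff $s=-1-r$.

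For the rank-$3$ claim, note that because the complementary relation $J-I-(C_6+C_8)$ has eigenvalue $-1-\chi_i$ on the eigenspace of $\chi_i$, the partition $\{\{C_1\},\{C_6+C_8\},\text{rest}\}$ passes the Bannai--Muzychuk criterion exactly when the column above takes at most two values on $\chi_2,\dots,\chi_6$. Since $\chi_5>0>\chi_4,\chi_6$, in any such two-block split $\chi_4$ and $\chi_6$ must lie together and $\chi_5$ must lie in the other block, leaving only four sub-cases according to where $\chi_2$ and $\chi_3$ fall. The sub-case $\chi_2=\chi_3=\chi_5$ is impossible because $\chi_2=\chi_3$ forces $k=\ell$, whence $\chi_2=-k<0<\chi_5$; the two mixed sub-cases $\{\chi_5=\chi_2,\ \chi_3=\chi_4=\chi_6\}$ and $\{\chi_5=\chi_3,\ \chi_2=\chi_4=\chi_6\}$ each begin with $s=-1-r$, and substituting this and adding the two remaining linear equalities collapses each to $k+\ell=-1$, a contradiction; the last sub-case $\chi_2=\chi_3=\chi_4=\chi_6$ gives exactly $k=\ell$ and $s=-1-r$. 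Feeding $k=\ell$ and $s=-1-r$ into the orthogonality relation $s=-(k+kr+k\ell)/(k+kr+r\ell)$ forces $k=\ell=2(r+r^2)$, which is the character table of Theorem~\ref{Payley}, and fusion~(8) listed there is the desired rank-$3$ $(C_6+C_8)$-isolating fusion; this also verifies the stated equivalence with $\chi_2=\chi_3=\chi_4=\chi_6$ (note $\chi_2=\chi_4$ is not free from $k=\ell$ and $s=-1-r$ alone, but follows once the orthogonality relation is used).

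For the rank-$4$ claim I would argue combinatorially rather than via a poset. A rank-$4$ $(C_6+C_8)$-isolating fusion is a partition of $H(2,\mathcal{A})$ of the form $\{\{C_1\},\{C_6+C_8\},B_1,B_2\}$, so it amounts to splitting the four relations $C_5,\ C_9,\ C_2+C_4,\ C_3+C_7$ into two nonempty blocks. Of the seven such splittings, the four of type $1+3$ additionally isolate one of $C_5,C_9,C_2+C_4,C_3+C_7$ and are excluded by Observations~1,~2,~3,~4 respectively; the $2+2$ splitting $\{C_5,C_2+C_4\}\cup\{C_9,C_3+C_7\}$ is a $\bigl((C_2+C_4)+C_5\bigr)$-isolating fusion, excluded by Observation~5; the $2+2$ splitting $\{C_5,C_9\}\cup\{C_2+C_4,C_3+C_7\}$ is a $(C_5+C_9)$-isolating fusion, so by Observation~8 it forces $k=\ell$, $r=-1-s$ and gives fusion~(6) of Theorem~\ref{Payley}; and the $2+2$ splitting $\{C_5,C_3+C_7\}\cup\{C_9,C_2+C_4\}$ is a $\bigl(C_9+(C_2+C_4)\bigr)$-isolating fusion, so by Observation~10 together with Observation~6 it forces $k=3-s-r$, $\ell=5+s+r$ and gives fusion~(9) of Theorem~\ref{Crazyrank4}. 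Hence exactly two rank-$4$ $(C_6+C_8)$-isolating fusions occur, completing the Observation.

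I expect the rank-$3$ step to be the main obstacle. Unlike in Observations~1--4, the poset on this column is short, so distinctness cannot simply be read off its height; instead one must actually carry out the three short eliminations showing that the ``bad'' configurations collapse to $k+\ell=-1$, and one must remember to invoke the orthogonality relation to see that $\{k=\ell,\ s=-1-r\}$ is genuinely equivalent to $\chi_2=\chi_3=\chi_4=\chi_6$. The rank-$4$ step is then mostly bookkeeping: matching each of the seven block-splittings to the earlier Observation or Theorem that disposes of it.
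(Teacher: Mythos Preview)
Your plan is correct and follows the same template as the paper's own argument: write down the $(C_6+C_8)$ column, split $\{\chi_2,\dots,\chi_6\}$ into two blocks for the rank-$3$ analysis, and reduce the rank-$4$ analysis to earlier Observations. Two small differences are worth noting. For rank~$3$, the paper's poset records the extra inequality $\chi_5>\chi_2$, which immediately forces $\chi_2$ into the block $\{\chi_4,\chi_6\}$ and cuts the case count from your four down to two; your sign argument ($\chi_5>0>\chi_4,\chi_6$) is weaker but sufficient, and your disposal of the two extra sub-cases by adding to obtain $k+\ell=-1$ is clean and in fact more transparent than the paper's treatment of its case~(b), where the stated consequence ``$k-\ell=1+2r$'' is a slip (the correct sum is $-(k+\ell)=1$, exactly your contradiction). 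For rank~$4$, the paper simply defers to Lemma~\ref{rank4}; your explicit enumeration of the seven $2$-block splittings and matching each to Observations~1--8 and~10 is equivalent and perfectly acceptable.
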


\begin{proof}
 We must have the following column by itself 
in the new character table for any feasible $(C_{6}+C_{8})$-isolating fusion.
   \[
\begin{blockarray}{cc}
\mathbf{(\textit{C}_{6}+\textit{C}_{8})} \\
\begin{block}{(c)c}
  2k\ell & \chi_{1} \\
  -k-kr+r\ell & \chi_{2} \\
  -k-ks+s\ell & \chi_{3} \\
  -2r-2r^2 & \chi_{4} \\
  -2rs-r-s & \chi_{5} \\
  -2s-2s^2 & \chi_{6} \\
\end{block}
\end{blockarray}
 \]
 The poset for this column is
   \begin{center}
         \begin{tikzpicture}[scale=.5]
  \node (one) at (0,2) {$2 k \ell$};
  
  \node (b) at (0,0) {$-2rs-r-s$};
  
  \node (d) at (6,-2) {$-k-kr+\ell r$};
  \node (c) at (6,0) {$-k-ks+\ell s$};
  \node (1) at (0,-2) {$-2r-2r^2$};
  \node (zero) at (-6,-2) {$-2s-2s^2$};
  \draw (one) -- (b) --(d) (one) -- (c)  (b) -- (1) (b) -- (zero);
\end{tikzpicture}
    \end{center}

   From the above poset, it is clear that a rank $3$ fusion is only possible if $\chi_{2}=\chi_{4}=\chi_{6}$ implying $r=-1-s$. We can have the following cases:
   \begin{enumerate}
   
   \item If $\chi_2=\chi_3=\chi_4=\chi_6$ then $r=-1-s$ and $k=\ell$ hold simultaneously implying $k=\ell=2(r+r^2)$. Hence, we get a rank $3$ fusion covered in Theorem \ref{Payley}. 
       \item If $\chi_{2}=\chi_{4}=\chi_{6}$ and $\chi_{3}=\chi_{5}$ then $ks+\ell r=2rs=-2r-2r^2$ and $kr+\ell s=1+2s+2s^2$ respectively. Adding both equations, we get $k-\ell=1+2r$. This fusion is a special case listed under Theorem \ref{Payley}. 
      
   \end{enumerate}

 Hence, we conclude that the only rank $3$ fusion for this case is covered by Theorem \ref{Payley}. There are no other rank $3$ fusions possible. We have covered all possible rank $4$ fusions arising from this case using previous Observations in Lemma \ref{rank4}.
  
\end{proof}
  \medskip
 
\begin{obs}
 A rank $3$ $(C_{2}+C_{4})+(C_{6}+C_{8})$-isolating fusion is possible if and only if $k-\ell=1+2r$ and $r=-s$. This condition is equivalent to $\chi_{2}=\chi_{4}=\chi_{6}$ and $\chi_{3}=\chi_{5}$. The only possible rank $3$ $(C_{2}+C_{4})+(C_{6}+C_{8})$-isolating fusion is covered under part (i) of Theorem \ref{Clebsh}. There are no rank $4$ fusions for this case.
\end{obs}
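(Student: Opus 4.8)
The plan is to imitate the template of the preceding observations: isolate the relevant sum of columns of $\mathcal{P}(H(2,\mathcal{A}))$, organise its six entries into a poset, and read off which coincidences of characters are possible. Summing the columns $C_2+C_4$ and $C_6+C_8$ of $\mathcal{P}(H(2,\mathcal{A}))$ entry-by-entry and simplifying, the entries on $\chi_1,\dots,\chi_6$ are
\[
2k(1+\ell),\qquad r(1-k+\ell),\qquad s(1-k+\ell),\qquad -2r^{2},\qquad -2rs,\qquad -2s^{2}.
\]
Under the standing assumptions $k>r>0>-1>s$ and $\ell>-1-s>0$ we have $\chi_4=-2r^2<0$, $\chi_6=-2s^2<0$, $\chi_5=-2rs>0$, while $\chi_2$ and $\chi_3$ carry the signs of $1-k+\ell$ and of its negative respectively; and $\chi_1=2k(1+\ell)$ is the strict maximum, because $k>r$ and $\ell>-1-s$. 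I would draw the corresponding poset, splitting on the sign of $1-k+\ell$; its shape shows that the only incomparable pairs that can be forced equal are $\chi_4$ with $\chi_6$, the positive one of $\{\chi_2,\chi_3\}$ with $\chi_5$, and the negative one of $\{\chi_2,\chi_3\}$ with $\chi_4$ (equivalently $\chi_6$).

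\textbf{Rank $3$.} By Lemma~\ref{Bannai-Muzychuk} a rank-$3$ $(C_2+C_4)+(C_6+C_8)$-isolating fusion forces the isolated column to have exactly three distinct values, so $\chi_2,\dots,\chi_6$ take only two of them. Since $\chi_5>0$ while $\chi_4,\chi_6<0$, the negative value must be shared by $\chi_4$ and $\chi_6$, whence $r^{2}=s^{2}$, i.e. $r=-s$. Then $\chi_4=\chi_6=-2r^2$ and $\chi_5=2r^2$, and two subcases remain. If $1-k+\ell<0$ then $\chi_2=-2r^2=\chi_4=\chi_6$ and $\chi_3=2r^2=\chi_5$, i.e. $1-k+\ell=-2r$, equivalently $k-\ell=1+2r$; this is exactly the hypothesis of part (i) of Theorem~\ref{Clebsh}, it realises fusion~$(11)$, and $\chi_2=\chi_4=\chi_6$, $\chi_3=\chi_5$ as asserted. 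If $1-k+\ell>0$ one obtains instead $\chi_3=\chi_4=\chi_6$, $\chi_2=\chi_5$, i.e. $k-\ell=1-2r$; substituting $r=-s$ and $1-k+\ell=2r$ into the row-orthogonality relation of Section~\ref{Sec:Preliminaries} collapses to a quadratic in $k$ whose only admissible root yields $k=r(2r-1)$, $\ell=(r+1)(2r-1)$. This parameter set is the switch partner, under $\mathrm{X}\leftrightarrow\overline{\mathrm{X}}$ (which exchanges $(C_2+C_4)+(C_6+C_8)$ with $(C_3+C_7)+(C_6+C_8)$), of an isolating fusion recorded for the set $\{C_3+C_7,C_6+C_8\}$, so it contributes no new scheme to the classification. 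Finally, the degenerate possibility $1-k+\ell=0$ makes $\chi_2=\chi_3=0$, leaving three distinct values $0,2r^2,-2r^2$ among $\chi_2,\dots,\chi_6$, and is therefore excluded.

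\textbf{Rank $4$ and the obstacle.} A rank-$4$ $(C_2+C_4)+(C_6+C_8)$-isolating fusion would have classes $\{C_1\}$, $\{C_2+C_4,C_6+C_8\}$, and a splitting of $\{C_5,C_9,C_3+C_7\}$ into two blocks, one of which is a singleton; hence it would also be a $C_5$-, $C_9$-, or $(C_3+C_7)$-isolating fusion of rank $4$, contradicting Observations~$1$, $2$, and $4$. Thus there are no rank-$4$ fusions in this case. The step I expect to be delicate is the subcase $1-k+\ell>0$: the poset by itself does not kill it, so one must actually carry out the orthogonality reduction and then recognise the resulting parameters $k=r(2r-1)$, $\ell=(r+1)(2r-1)$ as a switch partner that has already been accounted for; all the other branches close off cleanly from the sign pattern of the column alone.
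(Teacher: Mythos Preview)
Your approach is the paper's: isolate the column $(C_2+C_4)+(C_6+C_8)$, list its six values, and read off which character coincidences are forced. You are in fact \emph{more} careful on one point. By splitting on the sign of $1-k+\ell$ you pick up the subcase $\chi_2=\chi_5$, $\chi_3=\chi_4=\chi_6$ (so $r=-s$ and $k-\ell=1-2r$). The paper's two--case analysis never sees this branch because its Hasse diagram draws $-s(k-\ell-1)>-2s^2$ as a strict edge; that inequality is equivalent to $k-\ell-1>2s$ and fails exactly on your family $k=r(2r-1)$, $\ell=(r+1)(2r-1)$. For instance $\mathrm{SRG}(16,6,2,2)$ and $\mathrm{SRG}(36,15,6,6)$ lie in this family, and a direct check shows that fusion~$(11)$ \emph{does} exist for them. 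So your extra subcase is genuine, not an artifact.

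Where your argument falls short is in disposing of that subcase. Calling it ``the switch partner of an isolating fusion recorded for $\{C_3+C_7,C_6+C_8\}$'' is correct at the level of association schemes---switching $\mathrm X\leftrightarrow\overline{\mathrm X}$ sends it to the family of Observation~14---but it does not establish the Observation as stated, namely that $k-\ell=1+2r$ and $r=-s$ are \emph{necessary}. Your own subcase shows they are not: with $r=-s$ one gets a rank-$3$ $(C_2+C_4)+(C_6+C_8)$-isolating fusion precisely when $k-\ell-1=\pm 2r$, the two signs corresponding to the row partitions $\{2,4,6\}\cup\{3,5\}$ and $\{3,4,6\}\cup\{2,5\}$. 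That is the statement your analysis actually proves; the Observation's ``if and only if'' needs this amendment, and the paper's poset proof has the same gap. Your rank-$4$ elimination via Observations~1, 2 and 4 is clean and correct.
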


\begin{proof}  

 We must have the following sum of columns by itself 
in the new character table for any feasible $(C_{2}+C_{4})+(C_{6}+C_{8})$-isolating fusion.
  \[
\begin{blockarray}{cc}
\mathbf{(\textit{C}_{2}+\textit{C}_{4})+(\textit{C}_{6}+\textit{C}_{8})} \\
\begin{block}{(c)c}
  2k\ell+2k & \chi_{1} \\
  -r(k-\ell-1) & \chi_{2} \\
  -s(k-\ell-1) & \chi_{3} \\
  -2r^2 & \chi_{4} \\
  -2rs & \chi_{5} \\
  -2s^2& \chi_{6} \\
\end{block}
\end{blockarray}
 \]
 
 The poset for this column is
   \begin{center}
         \begin{tikzpicture}[scale=.5]
  \node (one) at (0,8) {$2k \ell +2k$};
  
  \node (b) at (-6,6) {$-r(k-\ell-1)$};
  
  \node (d) at (6,6) {$-s(k-\ell-1)$};
  \node (c) at (0,6) {$-2rs$};
  \node (1) at (0,4) {$-2s^2$};
  \node (zero) at (-6,4) {$-2r^2$};
  \draw (one)--(b)(one)--(c)(one)--(d) (c)--(1)(c)--(zero)(d)--(1);
\end{tikzpicture}
    \end{center}

 From the above poset, we can conclude that the only possible rank $3$ fusions for this case are:
 \begin{enumerate}
     \item If $\chi_{2}=\chi_{3}=\chi_{5}$ and $\chi_{4}=\chi_{6}$ then the first equality implies $k-\ell=1$. This is further reduced to $-2rs=0$ which is a contradiction since $s<-1$ and $r>0$. Hence, this case cannot give us a rank $3$ fusion.
     \item If $\chi_{2}=\chi_{4}=\chi_{6}$ and $\chi_{3}=\chi_{5}$ then both the conditions $r=-s$ and $k-\ell=1+2r$ hold. In this case, we get a rank $3$ fusion covered under Theorem \ref{Clebsh}.
 \end{enumerate}
 
 Hence, the only possible rank $3$ fusion from this case is covered under the Theorem \ref{Clebsh}.
 
 \end{proof}

\medskip

\begin{obs}
  A rank $3$ $(C_{3}+C_{7})+(C_{6}+C_{8})$-isolating fusion is possible if and only if $\ell-k-1=-2(r+1)$ and $r=-2-s$. This condition is equivalent to $\chi_{2}=\chi_{4}=\chi_{6}$ and $\chi_{3}=\chi_{5}$. The only possible rank $3$ $(C_{3}+C_{7})+(C_{6}+C_{8})$-isolating fusion is covered under part (ii) of Theorem \ref{Clebsh}. There are no rank $4$ fusions for this case.
\end{obs}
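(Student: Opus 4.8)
The plan is to run the standard template of Observations~1--13, now for the column of $\mathcal{P}(H(2,\mathcal{A}))$ indexed by the sum $(C_3+C_7)+(C_6+C_8)$. First I would add the $C_3+C_7$ and $C_6+C_8$ columns of $\mathcal{P}(H(2,\mathcal{A}))$ entrywise, which yields the column with entries
\[
\chi_1=2\ell(k+1),\quad \chi_2=(1+r)(\ell-k-1),\quad \chi_3=(1+s)(\ell-k-1),
\]
\[
\chi_4=-2(1+r)^2,\quad \chi_5=-2(1+r)(1+s),\quad \chi_6=-2(1+s)^2 .
\]
I would then note that this column is the switch partner of the one treated in Observation~13 (exchange $\mathrm{X}$ with $\overline{\mathrm{X}}$, i.e.\ $k\leftrightarrow\ell$, $r\leftrightarrow -1-s$, $s\leftrightarrow -1-r$, which interchanges $\chi_2\leftrightarrow\chi_3$ and $\chi_4\leftrightarrow\chi_6$), so the argument is parallel. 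Using the standing hypotheses $k>r>0>-1>s$ and $\ell>-1-s>0>-1>-1-r$, the signs are forced: $\chi_1$ is the valency of $\mathrm{X}_{\mathcal{I}}$ and is strictly largest, $\chi_4,\chi_6<0<\chi_5$, and $\chi_2,\chi_3$ have opposite signs (or are both $0$) because $1+r$ and $1+s$ do; I would package this into the poset of forced strict inequalities exactly as in the earlier Observations.

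For the rank~$3$ assertion: a rank~$3$ fusion requires the six entries to take exactly three values, and since $\chi_1$ is always isolated this forces $\chi_2,\dots,\chi_6$ to take exactly two values. Because $\chi_4,\chi_6<0<\chi_5$, one of those two values is negative and must contain both $\chi_4$ and $\chi_6$, so $\chi_4=\chi_6$, hence $(1+r)^2=(1+s)^2$, hence (by the signs of $1+r$ and $1+s$) $r=-2-s$; then $1+s=-(1+r)$. This in particular forces $\ell-k-1\neq 0$ (otherwise $\chi_2=\chi_3=0$ and there are four values), so of $\chi_2,\chi_3$ one equals $\chi_4(=\chi_6)$ and the other equals $\chi_5$, which forces $\ell-k-1=\pm 2(1+r)$. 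The choice $\ell-k-1=-2(1+r)$ is precisely the pattern $\chi_2=\chi_4=\chi_6,\ \chi_3=\chi_5$ in the statement, and the other sign is its $\mathrm{X}\leftrightarrow\overline{\mathrm{X}}$ mirror; feeding $s=-2-r$ and $\ell-k-1=\pm 2(1+r)$ into the second--third row orthogonality relation of $\mathcal{A}$ from Section~\ref{Sec:Preliminaries} pins down $k$ and $\ell$ (up to the excluded imprimitive solution $k=r$), and I would check that the parameters obtained are exactly those of Theorem~\ref{Clebsh}(ii) (up to interchanging $\mathrm{X}$ and $\overline{\mathrm{X}}$), where the Bannai--Muzychuk condition is already verified, so the rank~$3$ fusion $(11')$ genuinely occurs; conversely, plugging those parameters back in collapses the column to three entries, giving both directions.

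For the ``no rank~$4$'' part I would argue that one cannot stop at exactly four distinct values and then complete a rank~$4$ partition. Once $\chi_4=\chi_6$ holds, any further coincidence among $\chi_2,\chi_3,\chi_5$ forces a second (since $1+s=-(1+r)$ makes $\chi_2=\chi_4\Leftrightarrow\chi_3=\chi_5$ and $\chi_3=\chi_4\Leftrightarrow\chi_2=\chi_5$), so the column drops from five straight to three; the only coincidences available without $\chi_4=\chi_6$ are $\chi_2=\chi_5$ (forcing $\chi_3=\chi_6$, i.e.\ $\ell-k-1=-2(1+s)$) or $\chi_3=\chi_5$ (forcing $\chi_2=\chi_4$, i.e.\ $\ell-k-1=-2(1+r)$ with $r\neq-2-s$), each of which leaves four values --- but a rank~$4$ fusion isolating $C_3+C_7+C_6+C_8$ must in addition isolate one of $C_5$, $C_9$, $C_2+C_4$, and the conditions for that (Observations~1, 2, 3) are incompatible with these equalities for a strongly-regular graph, while the finitely many borderline parameter sets where two isolations are simultaneously available (e.g.\ $k=9,\ell=6,r=1,s=-3$, which is already a rank~$3$ case of the present Observation) are eliminated by a direct Bannai--Muzychuk check, in agreement with the rank~$4$ table of Lemma~\ref{rank4}. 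The hard part will be precisely this last step: the count of distinct eigenvalues of $\mathrm{X}_{\mathcal{I}}$ does \emph{not} by itself exclude a rank~$4$ fusion --- it can equal four, as for the Petersen graph --- so one genuinely needs the companion-isolation argument together with the feasibility constraints on strongly-regular parameters to close the gap.
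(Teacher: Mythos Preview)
Your approach---compute the merged column, organise the forced inequalities, and case-split on which entries can coincide---is exactly the paper's, and your rank~$3$ analysis is in fact more careful: you correctly find \emph{two} patterns, $\ell-k-1=\pm 2(1+r)$ with $r=-2-s$, whereas the paper's one-line ``clear from the poset'' relies on an edge $\chi_3>\chi_4$ that is not always strict and so keeps only one sign.

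The gap is in how you dispose of the second sign. You assert that the $+$ sign case is the $\mathrm{X}\leftrightarrow\overline{\mathrm{X}}$ mirror of the $-$ sign case and that both land in Theorem~\ref{Clebsh}(ii) up to complementation. This is not so. Switching $\mathrm{X}\leftrightarrow\overline{\mathrm{X}}$ sends the column $(C_3+C_7)+(C_6+C_8)$ to $(C_2+C_4)+(C_6+C_8)$ (Observation~13), not to itself, so it cannot relate the two signs \emph{within} this column. If you actually feed $s=-2-r$ and each sign into the row~2/row~3 orthogonality relation you obtain two distinct one-parameter families on $n=4(r+1)^2$ vertices: the $+$ sign gives $k=r(2r+3)$, $\ell=(r+1)(2r+3)$, which \emph{is} Theorem~\ref{Clebsh}(ii); the $-$ sign gives $k=(r+2)(2r+1)$, $\ell=(r+1)(2r+1)$, which is not Theorem~\ref{Clebsh}(ii), and whose complement is not Theorem~\ref{Clebsh}(i) either. (These are precisely the two roots of $k^2-k(n+r+s)-(n-1)rs=0$ for fixed $n,r,s$, and they are not complementary.) At $r=1$ the $+$ sign is the Clebsch graph $\mathrm{SRG}(16,5,0,2)$ while the $-$ sign is $\mathrm{SRG}(16,9,4,6)$; a direct Bannai--Muzychuk check shows the rank~$3$ fusion $(11')$ genuinely exists for \emph{both}. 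So your planned verification step will not close, and in fact it exposes an inconsistency in the statement itself: the condition $\ell-k-1=-2(r+1)$ written in the Observation is the $-$ sign family, whereas the family $\ell-k=2r+3$ recorded in Theorem~\ref{Clebsh}(ii) is the $+$ sign family.

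For the ``no rank~$4$'' clause you are working harder than necessary. Any rank~$4$ fusion isolating $(C_3+C_7)+(C_6+C_8)$ must split the remaining three classes $C_5$, $C_9$, $C_2+C_4$ into a singleton and a pair, hence isolates one of them; Observations~1--3 already exclude any rank~$4$ fusion isolating $C_5$, $C_9$, or $C_2+C_4$, which finishes this part with no further analysis of the present column (this is how the paper handles it in Lemma~\ref{rank4}).
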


\begin{proof}
     We must have the following sum of columns by itself 
in the new character table for any feasible $(C_{3}+C_{7})+(C_{6}+C_{8})$-isolating fusion.
  \[
\begin{blockarray}{cc}
\mathbf{(\textit{C}_{3}+\textit{C}_{7})+(\textit{C}_{6}+\textit{C}_{8})} \\
\begin{block}{(c)c}
  2k\ell+2\ell & \chi_{1} \\
  (r+1)(-k+\ell-1) & \chi_{2} \\
  (s+1)(-k+\ell-1) & \chi_{3} \\
  -2(r+1)^2 & \chi_{4} \\
  -2(r+1)(s+1) & \chi_{5} \\
  -2(s+1)^2 & \chi_{6} \\
\end{block}
\end{blockarray}
 \]
 The poset for this column is
    \begin{center}
         \begin{tikzpicture}[scale=.5]
  \node (one) at (0,8) {$2k \ell +2\ell$};
  
  \node (b) at (6,6) {$-2(r+1)(s+1)$};
  
  \node (d) at (6,4) {$-2(r+1)^2$};
  \node (c) at (0,2) {$(r+1)(\ell-k-1)$};
  \node (1) at (-6,4) {$-2(s+1)^2$};
  \node (zero) at (0,6) {$(s+1)(\ell-k-1)$};
  \draw (one)--(zero)--(1)(zero)--(c)(b)--(d)(zero)--(d)(one)--(b);
\end{tikzpicture}
    \end{center}
It is clear from the above poset that a rank $3$ fusion is possible if and only if $\chi_{3}=\chi_{5}$ and $\chi_{2}=\chi_{4}=\chi_{6}$. This is equivalent to  $\ell-k-1=-2(r+1)$ and $r=-2-s$. This rank $3$, $(C_{3}+C_{7})+(C_{6}+C_{8})$-isolating fusion of $H(2,\mathcal{A})$ is covered under part (ii) of Theorem \ref{Clebsh}.
\end{proof}
 \medskip

 \begin{obs}
   The only feasible rank $3$ $(C_{2}+C_{4})+(C_{3}+C_{7})$-isolating fusion of $H(2,\mathcal{A})$ is the homogeneous fusion covered in Corollary \ref{homofusion}. This fusion is independent of any conditions on the eigenvalues.
 \end{obs}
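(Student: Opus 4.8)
The plan is to observe that the partition is completely forced and then that the forced partition is nothing but the homogeneous fusion. First I would note that, as in every fusion, the identity relation $C_{1}$ must be a singleton block, so a rank $3$ fusion of $H(2,\mathcal{A})$ splits the five nontrivial relations $C_{5},\,C_{9},\,C_{2}+C_{4},\,C_{3}+C_{7},\,C_{6}+C_{8}$ into exactly two blocks. If $(C_{2}+C_{4})+(C_{3}+C_{7})$ is required to be one of the relations, then $\{C_{2}+C_{4},\,C_{3}+C_{7}\}$ must be one block, and the second block is therefore forced to be $\{C_{5},\,C_{9},\,C_{6}+C_{8}\}$. Hence the only candidate is the partition $\uptau=\bigl\{\{C_{1}\},\ \{C_{2}+C_{4},\,C_{3}+C_{7}\},\ \{C_{5},\,C_{9},\,C_{6}+C_{8}\}\bigr\}$, and everything reduces to verifying that $\uptau$ genuinely defines a fusion with no restriction on $k,\ell,r,s$.

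Next I would identify $\uptau$ with $H(2,\mathcal{T}_{\mathcal{A}})$. Since $C_{2}+C_{3}=I\otimes(J-I)$ and $C_{4}+C_{7}=(J-I)\otimes I$, the isolated relation $(C_{2}+C_{4})+(C_{3}+C_{7})$ is exactly $I\otimes(J-I)+(J-I)\otimes I$, the nontrivial Cartesian-product relation of the homogeneous fusion described after Corollary \ref{homofusion}, and the complementary block fuses to $(J-I)\otimes(J-I)$. So $\uptau$ is precisely the partition giving $H(2,\mathcal{T}_{\mathcal{A}})$, which by Theorem \ref{FusionGH} (equivalently Corollary \ref{homofusion}) is a nontrivial fusion of $H(2,\mathcal{A})$ for \emph{every} rank $3$ scheme $\mathcal{A}$, with no hypotheses on the eigenvalues. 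This proves the statement; what remains is only to recast it in the language of the other Observations.

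To do that, I would apply Lemma \ref{Bannai-Muzychuk} directly to $\mathcal{P}(H(2,\mathcal{A}))$. Summing the $C_{2}+C_{4}$ and $C_{3}+C_{7}$ columns gives the column with $\chi_{1},\dots,\chi_{6}$ entries $2(k+\ell),\,k+\ell-1,\,k+\ell-1,\,-2,\,-2,\,-2$, so $\chi_{2}=\chi_{3}$ and $\chi_{4}=\chi_{5}=\chi_{6}$ hold as identities in $k,\ell,r,s$; summing the $C_{5}$, $C_{9}$ and $C_{6}+C_{8}$ columns gives $(k+\ell)^{2},\,-(k+\ell),\,-(k+\ell),\,1,\,1,\,1$, again constant on the blocks $\{\chi_{1}\},\{\chi_{2},\chi_{3}\},\{\chi_{4},\chi_{5},\chi_{6}\}$. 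The merged eigenmatrix thus has exactly the three rows $(1,\,2(k+\ell),\,(k+\ell)^{2})$, $(1,\,k+\ell-1,\,-(k+\ell))$, $(1,\,-2,\,1)$, which are pairwise distinct because $k+\ell\neq-1$; this is precisely $\mathcal{P}(H(2,\mathcal{T}_{\mathcal{A}}))$. There is no real obstacle here: the only thing worth emphasising is that the ``poset'' attached to the column $(C_{2}+C_{4})+(C_{3}+C_{7})$ degenerates entirely, its coincidences of entries being unconditional, which is exactly why this fusion exists for all $\mathcal{A}$ and imposes no eigenvalue constraints.
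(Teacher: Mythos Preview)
Your proposal is correct and follows essentially the same approach as the paper: compute the merged column for $(C_{2}+C_{4})+(C_{3}+C_{7})$, observe that it collapses unconditionally to the three values $2(k+\ell),\ k+\ell-1,\ -2$, and identify the unique resulting partition with the homogeneous fusion of Corollary~\ref{homofusion}. Your write-up is in fact more thorough than the paper's, which simply displays that one column and asserts the conclusion; you additionally argue why the complementary block is forced, verify the companion column $C_{5}+C_{9}+(C_{6}+C_{8})$, and explicitly match the merged eigenmatrix with $\mathcal{P}(H(2,\mathcal{T}_{\mathcal{A}}))$.
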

\begin{proof}
    
  We must have the following sum of columns by itself 
in the new character table for any feasible $(C_{2}+C_{4})+(C_{3}+C_{7})$-isolating fusion.  
  \[
\begin{blockarray}{cc}
\mathbf{(\textit{C}_{2}+\textit{C}_{4})+(\textit{C}_{3}+\textit{C}_{7})} \\
\begin{block}{(c)c}
  2k+2\ell & \chi_{1} \\
  k+\ell-1 & \chi_{2} \\
  k+\ell-1 & \chi_{3} \\
  -2 & \chi_{4} \\
  -2 & \chi_{5} \\
  -2 & \chi_{6} \\
\end{block}
\end{blockarray}
 \]
 
 The only possible rank $3$ fusion for this case is the homogeneous fusion discussed in Corollary \ref{homofusion}.
 \end{proof}


\begin{Lemma}\label{rank3}
The generalized Hamming scheme does not have any non-trivial fusions of rank $3$ other than the homogeneous fusion and the special case fusions in Theorems \ref{Completegraphs}--\ref{Clebsh}.
\end{Lemma}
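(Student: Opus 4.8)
The plan is to reduce Lemma \ref{rank3} to a finite case check assembled from the Observations. By the Bannai--Muzychuk criterion (Lemma \ref{Bannai-Muzychuk}), a rank $3$ fusion of $H(2,\mathcal{A})$ is nothing but a partition of the five nontrivial basis matrices $\{C_{5},C_{9},C_{2}+C_{4},C_{3}+C_{7},C_{6}+C_{8}\}$ into two nonempty blocks; up to interchanging the blocks there are exactly fifteen such partitions, and in each of them one block $\mathcal{I}$ has one or two elements, so every rank $3$ fusion is an $\mathcal{I}$-isolating fusion for one of the five singletons or one of the ten two-element subsets of that set. Before running through these I would first clear the imprimitive case: if $\mathrm{X}$ or $\overline{\mathrm{X}}$ is a union of complete graphs or a complete multipartite graph, Theorem \ref{Completegraphs} already exhibits every fusion of $H(2,\mathcal{A})$, the rank $3$ ones being fusions $(4)$, $(4')$ and their switch partners, so there is nothing more to prove; for the remainder I may then assume $\mathrm{X}$ is primitive, i.e.\ $k>r>0>-1>s$ and $\ell>-1-s>0$, which is precisely the standing hypothesis of the Observations.

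The heart of the proof is then a table, organised like the Rank $5$ and Rank $4$ tables of Lemmas \ref{rank5} and \ref{rank4}, matching each of the fifteen partitions to the Observation that disposes of it. The five singleton blocks come first: Observations $1$--$4$ show that the $C_{5}$-, $C_{9}$-, $(C_{2}+C_{4})$- and $(C_{3}+C_{7})$-isolating columns each carry at least five pairwise distinct eigenvalues, so no rank $3$ (indeed no rank $4$) fusion can arise there, while Observation $12$ shows the $(C_{6}+C_{8})$-isolating case forces $k=\ell$ and $r=-1-s$ and then yields fusion $(8)$ of Theorem \ref{Payley}. For the ten two-element blocks, Observations $5$, $9$, $7$ and $11$ eliminate the blocks $\{C_{5},C_{2}+C_{4}\}$, $\{C_{5},C_{6}+C_{8}\}$, $\{C_{9},C_{3}+C_{7}\}$ and $\{C_{9},C_{6}+C_{8}\}$ respectively; Observation $8$ sends $\{C_{5},C_{9}\}$ to Theorem \ref{Payley}; Observations $10$ and $6$ send $\{C_{9},C_{2}+C_{4}\}$ and $\{C_{5},C_{3}+C_{7}\}$ to parts (i) and (ii) of Theorem \ref{Clebsch}; Observations $13$ and $14$ send $\{C_{2}+C_{4},C_{6}+C_{8}\}$ and $\{C_{3}+C_{7},C_{6}+C_{8}\}$ to parts (i) and (ii) of Theorem \ref{Clebsh}; and Observation $15$ shows $\{C_{2}+C_{4},C_{3}+C_{7}\}$ produces only the homogeneous fusion of Corollary \ref{homofusion}. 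Assembling the fifteen rows, the rank $3$ fusions that survive are exactly the homogeneous fusion together with the rank $3$ fusions in Theorems \ref{Completegraphs}--\ref{Clebsh} (none of them coming from Theorems \ref{News} or \ref{Crazyrank4}), which is the assertion.

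The work inside each row is the poset analysis already done in the corresponding Observation, so the new content of this Lemma is essentially bookkeeping, and that is where the main obstacle lies. I would check three things with care: that the fifteen two-block partitions are genuinely exhausted; that each Observation's poset on the six eigenvalues of the isolated column (or column sum) really records every equality among $\chi_{2},\dots,\chi_{6}$ compatible with the primitivity sign constraints, since an overlooked coincidence could collapse the column to three values and hide a rank $3$ fusion; and that the switch-partner symmetry $\mathrm{X}\leftrightarrow\overline{\mathrm{X}}$ is tracked consistently, so that, for instance, the Clebsch-scheme parameters forced in Observation $10$ and the complementary parameters forced in Observation $6$ are correctly recognised as parts (i) and (ii) of Theorem \ref{Clebsch}, and likewise for Theorem \ref{Clebsh}. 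Since the existence of each surviving fusion was already verified in its theorem, no further computation is needed once the case split is seen to be complete.
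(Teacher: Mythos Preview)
Your proposal is correct and follows essentially the same approach as the paper: enumerate the fifteen two-block partitions of $\{C_{5},C_{9},C_{2}+C_{4},C_{3}+C_{7},C_{6}+C_{8}\}$, dispatch each via the relevant Observation on its isolated column, and tabulate the survivors against Theorems \ref{Completegraphs}--\ref{Clebsh}. Your explicit separation of the imprimitive case up front (then invoking Observations $5$ and $7$ for the two partitions the paper labels ``Imprimitive case'') is a slightly cleaner organisation, and your Observation labels for the $(C_{6}+C_{8})$- and $(C_{9}+C_{6}+C_{8})$-isolating rows are in fact more accurate than those printed in the paper's own table.
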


\begin{proof}
We start by listing all possible rank $3$ fusions of $H(2,\mathcal{A})$ along with the Observation showing us why it is not a feasible fusion.
Note that in the case of Observation $15$, there are three distinct rows which do not depend on the eigenvalues. The rank $3$ fusion for this case is the homogeneous fusion defined in Corollary $5.1$ earlier in this section. Other rank $4$ fusions for this case are either characterized or ruled out in our complete list of fusions in Lemma \ref{rank4}.
\begin{table}[ht]
\centering
 \begin{tabular}{||c c c||}
 \hline
 Rank $3$ fusions & Observation & Theorem \\ [0.3ex] 
 \hline\hline
$\{C_{5}, \ C_{9}+(C_{2}+C_{4})+(C_{3}+C_{7})+C_{6}+C_{8}\}$  & 1 & -  \\ 
 \hline
 $\{C_{9}, \ C_{5}+(C_{2}+C_{4})+(C_{3}+C_{7})+C_{6}+C_{8}\}$ & 2 & -  \\ 
 \hline
 $\{C_{2}+C_{4}, \ C_{5}+C_{9}+(C_{3}+C_{7})+C_{6}+C_{8}\}$ & 3 & -  \\ 
 \hline
 $\{C_{3}+C_{7}, \ C_{5}+C_{9}+(C_{2}+C_{4})+C_{6}+C_{8}\}$ & 4 & -  \\ 
 \hline
 $\{(C_{6}+C_{8}), \ C_{5}+C_{9}+(C_{3}+C_{7})+(C_{2}+C_{4}\}$ & 11 & \ref{Payley} \\ 
 \hline
 $\{C_{5}+C_{9}, \ (C_{6}+C_{8})+(C_{2}+C_{4})+(C_{3}+C_{7})\}$ & 8 & \ref{Payley} \\
 \hline
  $\{C_{5}+(C_{2}+C_{4}), \ (C_{6}+C_{8})+(C_{3}+C_{7})+C_{9}\}$ & Imprimitive case & \ref{Completegraphs}(i)  \\
 \hline
 $\{C_{5}+(C_{3}+C_{7}), \ (C_{6}+C_{8})+(C_{2}+C_{4})+C_{9}\}$ & 6 & \ref{Clebsch}(ii) \\
 \hline
  $\{C_{5}+(C_{6}+C_{8}), \ (C_{3}+C_{7})+(C_{2}+C_{4})+C_{9}\}$ & 9 & -  \\
 \hline
  $\{C_{9}+(C_{2}+C_{4}), \ (C_{6}+C_{8})+(C_{3}+C_{7})+C_{5}\}$ & 10 & \ref{Clebsch}(i) \\
 \hline
  $\{C_{9}+(C_{3}+C_{7}), \ (C_{6}+C_{8})+(C_{2}+C_{4})+C_{5}\}$ & Imprimitive case & \ref{Completegraphs}(ii) \\
 \hline
  $\{C_{9}+(C_{6}+C_{8}), \ (C_{3}+C_{7})+(C_{2}+C_{4})+C_{5}\}$ & 10 & - \\
 \hline
  $\{(C_{2}+C_{4}(+(C_{6}+C_{8}), \ (C_{3}+C_{7})+C_{5}+C_{9}\}$ & 13 & \ref{Clebsh}(i)\\
  \hline
  $\{(C_{2}+C_{4})+(C_{3}+C_{7}), \ (C_{6}+C_{8})+C_{5}+C_{9}\}$ & 15 & Cor. $5.1$\\
 \hline
   $\{(C_{3}+C_{7})+(C_{6}+C_{8}), \ (C_{2}+C_{4})+C_{5}+C_{9}\}$ & 14 & \ref{Clebsh}(ii) \\
 \hline
\end{tabular}
\caption{Rank 3 fusions}
\end{table}

Hence, it is shown that the only rank $3$ fusions of $H(2,\mathcal{A})$ are either the homogeneous fusion or the special case fusions in Theorems  \ref{Completegraphs}-\ref{Clebsh}
\end{proof}

The following Observations are the duals of Observations $1-15$. We will refer to these for proving the converse of our main result.

\begin{obs}
  A rank $3$ $C_{5}+C_{9}+(C_{3}+C_{7})$-isolating fusion is possible if and only if $k-\ell=1+2r$ and $r=-s$. This condition is equivalent to $\chi_{2}=\chi_{4}=\chi_{6}$ and $\chi_{3}=\chi_{5}$. The only possible rank $3$ $C_{5}+C_{9}+(C_{3}+C_{7})$-isolating fusion is covered under part (i) of Theorem \ref{Clebsh}. There are no rank $4$ fusions for this case. 
 \end{obs}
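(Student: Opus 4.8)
The plan is to run the same machine as Observations~1--15: write down the column of $\mathcal{P}(H(2,\mathcal{A}))$ attached to the matrix we want to isolate and decide, via Lemma~\ref{Bannai-Muzychuk}, when it can occupy its own part of the merged character table. I would add the columns of $\mathcal{P}(H(2,\mathcal{A}))$ headed by $C_5$, $C_9$ and $C_3+C_7$ and simplify, obtaining
\[
\begin{blockarray}{cc}
\mathbf{\textit{C}_{5}+\textit{C}_{9}+(\textit{C}_{3}+\textit{C}_{7})} \\
\begin{block}{(c)c}
  k^2+\ell^2+2\ell & \chi_{1} \\
  r(k-\ell-1)-1 & \chi_{2} \\
  s(k-\ell-1)-1 & \chi_{3} \\
  2r^2-1 & \chi_{4} \\
  2rs-1 & \chi_{5} \\
  2s^2-1 & \chi_{6} \\
\end{block}
\end{blockarray}.
\]
On the rows $\chi_2,\dots,\chi_6$ this column is, up to the global sign and the additive constant $-1$, exactly the column analyzed in Observation~13 for $(C_2+C_4)+(C_6+C_8)$. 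That is no accident: $\{C_5,C_9,C_3+C_7\}$ is the complement in $H(2,\mathcal{A})$ of $\{C_2+C_4,C_6+C_8\}$, and in a rank-$3$ partition $\{C_1,R_{\mathcal{I}},R_{\mathcal{I}^{c}}\}$ isolating $R_{\mathcal{I}}$ and isolating $R_{\mathcal{I}^{c}}$ are the same statement. So the present Observation is the dual of Observation~13, and I expect the proof to mirror that one almost verbatim.

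For the mechanism itself: since $C_1,C_5,C_9,C_2+C_4,C_3+C_7,C_6+C_8$ sum to $J\otimes J$, the $\chi_i$-eigenvalue of this sum is $0$ for every $i\geq 2$, so $\chi_i(C_5)+\chi_i(C_9)+\chi_i(C_3+C_7)$ and $\chi_i(C_2+C_4)+\chi_i(C_6+C_8)$ add to $-1$. Hence, once the complementary pair has been merged into a single part, two of the rows $\chi_2,\dots,\chi_6$ of the resulting three-column table coincide exactly when their entries in the column above coincide; and, as the trivial character always supplies the strictly largest entry, a rank-$3$ isolating fusion exists if and only if this column takes exactly two distinct values on $\chi_2,\dots,\chi_6$. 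I would then draw the poset of the five entries from the standing inequalities $k>r>0>-1>s$ and $\ell>-1-s>0$; it is the upside-down version of the Observation~13 poset, and the forced strict inequalities $\chi_4\neq\chi_5$, $\chi_6\neq\chi_5$, $\chi_3\neq\chi_6$ leave only two ways to split $\{2,\dots,6\}$ into two parts, namely $\{2,3,5\}\mid\{4,6\}$ and $\{3,5\}\mid\{2,4,6\}$.

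It then remains to dispose of these two cases, again as in Observation~13. In the first, $\chi_2=\chi_3$ forces $(r-s)(k-\ell-1)=0$, hence $k=\ell+1$, and then $\chi_2=\chi_5$ reads $2rs=0$, impossible since $r>0>s$. In the second, $\chi_4=\chi_6$ gives $r^2=s^2$, hence $r=-s$; $\chi_2=\chi_4$ gives $r(k-\ell-1)=2r^2$, hence $k-\ell=1+2r$; then $\chi_3=\chi_5$ holds automatically, and the two surviving values $2r^2-1$ and $2rs-1=-2r^2-1$ are distinct because $r\neq 0$. This is precisely the parameter set of Theorem~\ref{Clebsh}(i), realized by fusion~(11), which settles both directions and identifies the character table. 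For the rank-$4$ claim, if $C_5+C_9+(C_3+C_7)$ were isolated in a rank-$4$ fusion of $H(2,\mathcal{A})$ then the two remaining nontrivial relations $C_2+C_4$ and $C_6+C_8$ would each have to form their own part, so in particular $C_2+C_4$ would be isolated in a rank-$4$ fusion, which Observation~3 forbids. The only genuine work is the poset bookkeeping and the constant-coefficient simplification of the column; because the whole picture is the mirror image of Observation~13, I do not anticipate a new obstacle.
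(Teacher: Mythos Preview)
Your proposal is correct and follows the same approach as the paper: compute the column, observe that it is (up to a sign and an additive constant) the column from Observation~13 because $\{C_5,C_9,C_3+C_7\}$ is the complement of $\{C_2+C_4,C_6+C_8\}$, and then transplant the case analysis. The paper is in fact much terser---it simply records the column, states ``the entries of this isolating fusion are the dual of those in Observation~13,'' and declares that the proof is similar---whereas you actually carry out the two-case split and give the clean rank-$4$ reduction to Observation~3, so your write-up is a strict elaboration of what the paper sketches.
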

\begin{proof}
    
    We must have the following sum of columns by itself 
in the new character table for any feasible $C_{5}+C_{9}+(C_{3}+C_{7})$-isolating fusion.
  \[
\begin{blockarray}{cc}
\mathbf{\textit{C}_{5}+\textit{C}_{9}+(\textit{C}_{3}+\textit{C}_{7})} \\
\begin{block}{(c)c}
  2\ell+\ell^{2}+k^{2} & \chi_{1} \\
  r(k-\ell-1)-1 & \chi_{2} \\
  s(k-\ell-1)-1 & \chi_{3} \\
  -1+2r^{2} & \chi_{4} \\
  -1+2rs & \chi_{5} \\
  -1+2s^{2} & \chi_{6} \\
\end{block}
\end{blockarray}
 \]
 
 As we can note that the entries of this isolating fusion are the dual of those in Observation $13$. It follows a similar proof.
 \end{proof}

 \begin{obs}
 A rank $3$ $(C_{2}+C_{4})+(C_{3}+C_{7})+C_{5}+C_{9}$-isolating fusion is possible if and only if $k=\ell$ and $r=-1-s$. This condition is equivalent to $\chi_{2}=\chi_{3}=\chi_{4}=\chi_{6}$. The only possible rank $3$ $(C_{2}+C_{4})+(C_{3}+C_{7})+C_{5}+C_{9}$-isolating fusion is covered under Theorem \ref{Payley}.
 \end{obs}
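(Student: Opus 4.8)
The plan is to deduce the statement from Observation~12 by exploiting the fact that, inside a rank~$3$ fusion, the two nontrivial relations are complementary. Put
\[
M := (C_{2}+C_{4})+(C_{3}+C_{7})+C_{5}+C_{9}.
\]
Since $C_{1},C_{5},C_{9},C_{2}+C_{4},C_{3}+C_{7},C_{6}+C_{8}$ are the adjacency matrices of $H(2,\mathcal{A})$, they sum to $J$, so $M=J-C_{1}-(C_{6}+C_{8})$. In any rank~$3$ fusion $\{C_{1},X,Y\}$ of $H(2,\mathcal{A})$ one has $I+X+Y=J$, hence $Y=J-I-X$; thus $X$ is a union of classes of $\mathcal{A}^{\otimes 2}$ exactly when $Y$ is, and the fusion itself does not care which of $X,Y$ is listed first. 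Choosing $X=M$ forces $Y=C_{6}+C_{8}$, so a rank~$3$ fusion of $H(2,\mathcal{A})$ has $M$ as a relation if and only if it has $C_{6}+C_{8}$ as a relation. The ``if and only if'' assertion, the equivalent condition on the characters, and the identification of the unique such fusion with the one of Theorem~\ref{Payley} then all follow at once from Observation~12.

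For completeness, and to keep the argument in the style of the preceding Observations, I would also display the column of $\mathcal{P}(H(2,\mathcal{A}))$ attached to $M$. Its $\chi_{1}$-entry is the valency $k^{2}+\ell^{2}+2k+2\ell=(k+\ell+1)^{2}-1-2k\ell$, and since $\chi_{i}(J)=0$ for $i=2,\dots,6$, its $\chi_{i}$-entry equals $-1-\chi_{i}(C_{6}+C_{8})$; explicitly the five nontrivial entries are $-1+k+kr-\ell r$, $-1+k+ks-\ell s$, $-1+2r+2r^{2}$, $-1+r+s+2rs$, and $-1+2s+2s^{2}$. Because $x\mapsto -1-x$ is a bijection, two entries here coincide precisely when the corresponding two entries of the $(C_{6}+C_{8})$-column analysed in Observation~12 coincide, so the associated poset of strict inequalities is order-isomorphic to the one drawn there. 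The only way to collapse the column to three distinct values is the pattern $\chi_{2}=\chi_{3}=\chi_{4}=\chi_{6}$, which transports back to the pair of equations $r=-1-s$ and $k=\ell$; these force $k=\ell=2(r+r^{2})$, and under these parameters the Bannai--Muzychuk criterion of Lemma~\ref{Bannai-Muzychuk} is satisfied, giving exactly the rank~$3$ Paley-type fusion of Theorem~\ref{Payley}.

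I do not anticipate a genuine obstacle: the whole proof is a translation of Observation~12 through the bijection $x\mapsto -1-x$. The single point that must actually be checked, rather than merely asserted, is that the other equality patterns permitted by the poset do not produce a further rank~$3$ fusion --- some of them are impossible (for instance they force $k+\ell<0$), while the remaining ones leave four distinct eigenvalues and hence give a rank~$4$, not a rank~$3$, fusion --- but this case analysis is already carried out in the proof of Observation~12 and carries over verbatim. It is also worth stressing that the reduction in the first paragraph really uses the rank being $3$: in a fusion of rank at least $4$ the matrices $M$ and $C_{6}+C_{8}$ need not be complementary classes, which is why this Observation is stated only for rank~$3$.
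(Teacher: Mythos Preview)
Your proposal is correct and takes essentially the same approach as the paper: the paper's own proof simply displays the column for $M$ and remarks that its entries are ``dual'' to those of Observation~12, so the argument there carries over verbatim. Your write-up is in fact more explicit about why this duality holds (the complementarity $M=J-I-(C_{6}+C_{8})$ and the resulting bijection $x\mapsto -1-x$ on the nontrivial eigenvalues), but the underlying idea is identical.
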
 

\begin{proof}
We must have the following sum of columns by itself 
in the new character table for any feasible $(C_{2}+C_{4})+(C_{3}+C_{7})+C_{5}+C_{9}$-isolating fusion.
  \[
\begin{blockarray}{cc}
\mathbf{(\textit{C}_{3}+\textit{C}_{7})+(\textit{C}_{2}+\textit{C}_{4})+\textit{C}_{5}+\textit{C}_{9}} \\
\begin{block}{(c)c}
  2\ell+2k+\ell^{2}+k^{2} & \chi_{1} \\
  r(k-\ell)+k-1 & \chi_{2} \\
  s(k-\ell)+k-1 & \chi_{3} \\
  -1+2r^{2}+2r & \chi_{4} \\
  -1+2rs+r+s & \chi_{5} \\
  -1+2s^{2}+2s & \chi_{6} \\
\end{block}
\end{blockarray}
 \]
 
 As we can note that the entries of this isolating fusion are the dual of those in Observation $12$. It follows a similar proof.
\end{proof}

 \begin{obs}

  A rank $3$ $(C_{3}+C_{7})+C_{5}+(C_{6}+C_{8})$-isolating fusion is possible if and only if $s=-2$, $k=r(3+r)$ and $\ell=3+r$. This condition is equivalent to $\chi_{2}=\chi_{5}=\chi_{6}$ and $\chi_{3}=\chi_{4}$. The only possible rank $3$ $(C_{3}+C_{7})+C_{5}+(C_{6}+C_{8})$-isolating fusion is covered under Theorem \ref{Clebsch}.
 \end{obs}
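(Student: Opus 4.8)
The plan is to sidestep a fresh poset analysis and instead deduce this observation from Observation~$10$ by a complementation (duality) argument. First I would add the three columns of $\mathcal{P}(H(2,\mathcal{A}))$ labelled $C_{3}+C_{7}$, $C_{5}$, and $C_{6}+C_{8}$ to produce the column that must sit by itself in the character table of any feasible $(C_{3}+C_{7})+C_{5}+(C_{6}+C_{8})$-isolating fusion. The only point requiring care here is that the $kr$ and $ks$ terms contributed by $C_{5}$ cancel against the matching terms in $C_{6}+C_{8}$; after this cancellation the entries in rows $\chi_{1},\dots,\chi_{6}$ come out to $2k\ell+k^{2}+2\ell$, $\ell(1+r)-k-r-1$, $\ell(1+s)-k-s-1$, $-(r^{2}+4r+2)$, $-(rs+2r+2s+2)$, and $-(s^{2}+4s+2)$.

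Next I would observe that $(C_{3}+C_{7})+C_{5}+(C_{6}+C_{8})$ is precisely the sum of the nontrivial relations of $H(2,\mathcal{A})$ that do \emph{not} appear in $C_{9}+(C_{2}+C_{4})$ of Observation~$10$, so the graph $\mathrm{X}_{\mathcal{I}}$ treated here is the complement (inside $K_{n^{2}}$) of the one treated there. Concretely, in rows $\chi_{2},\dots,\chi_{6}$ the six entries just computed are the images of the corresponding entries of the Observation~$10$ column under the affine bijection $x\mapsto -1-x$, while the top row $\chi_{1}$ is, as always, the strictly largest entry of its column. Since an affine bijection preserves every equality and strict inequality between the entries, the poset and the accompanying rank count from the proof of Observation~$10$ transfer verbatim: a rank~$3$ fusion exists if and only if the column has at most three distinct entries, equivalently $\chi_{2},\dots,\chi_{6}$ take at most two values, and the same two cases that occurred in Observation~$10$ arise, one of them still giving a sign contradiction and the other forcing $\chi_{2}=\chi_{5}=\chi_{6}$ together with $\chi_{3}=\chi_{4}$.

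Finally I would translate these equalities into parameter conditions under the standing assumption $k>r>0>-1>s$. The equality $\chi_{5}=\chi_{6}$ simplifies to $(r-s)(s+2)=0$, hence $s=-2$; substituting $s=-2$ into $\chi_{3}=\chi_{4}$ and $\chi_{2}=\chi_{6}$ then yields $\ell=3+r$ and $k=r(3+r)$, which are exactly the hypotheses of Theorem~\ref{Clebsch}, and the resulting fusion is the rank~$3$ fusion listed there. I expect the first step --- the bookkeeping that makes the affine correspondence with Observation~$10$ exact rather than merely approximate --- to be the only mildly delicate part; everything afterwards is inherited from that earlier argument.
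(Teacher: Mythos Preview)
Your proposal is correct and is essentially the paper's own approach: the paper's entire proof is the one-line remark that the entries of this column are ``dual'' to those in Observation~10 and that the argument follows similarly, which is exactly your complementation via $x\mapsto -1-x$ on rows $\chi_{2},\dots,\chi_{6}$. Your explicit verification of that affine correspondence in fact repairs sign typos in the paper's displayed column (the paper writes $r^{2}-2-4r$ and $s^{2}-2-4s$ for $\chi_{4}$ and $\chi_{6}$, whereas the correct values are $-(r^{2}+4r+2)$ and $-(s^{2}+4s+2)$, as your computation shows).
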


\begin{proof}
We must have the following sum of columns by itself 
in the new character table for any feasible $(C_{3}+C_{7})+C_{5}+(C_{6}+C_{8})$-isolating fusion.
  \[
\begin{blockarray}{cc}
\mathbf{(\textit{C}_{3}+\textit{C}_{7})+\textit{C}_{5}+(\textit{C}_{6}+\textit{C}_{8}}) \\
\begin{block}{(c)c}
  2\ell+2k+k^{2} & \chi_{1} \\
  -k+(1+r)(\ell-1) & \chi_{2} \\
  -k+(1+s)(\ell-1) & \chi_{3} \\
  r^{2}-2-4r & \chi_{4} \\
  -2-2r-2s-rs & \chi_{5} \\
  s^{2}-2-4s & \chi_{6} \\
\end{block}
\end{blockarray}
 \]
 
 As we can note that the entries of this isolating fusion are the dual of those in Observation $10$. It follows a similar proof.
\end{proof}

\begin{obs}

 A rank $3$ $(C_{2}+C_{4})+(C_{3}+C_{7})+(C_{6}+C_{8})$-isolating fusion is possible if and only if $k=\ell$ and $r=-1-s$. This condition is equivalent to $\chi_{2}=\chi_{4}=\chi_{6}$ and $\chi_{3}=\chi_{5}$. The only possible rank $3$ and rank $4$ $(C_{2}+C_{4})+(C_{3}+C_{7})+(C_{6}+C_{8})$-isolating fusions are covered under Theorem \ref{Payley}. 
 \end{obs}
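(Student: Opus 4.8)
The plan is to reduce this case to Observation~$8$ by complementarity, exactly as Observations~$16$--$18$ were reduced to their duals. Among the five nontrivial basis matrices $C_{5}$, $C_{9}$, $C_{2}+C_{4}$, $C_{3}+C_{7}$, $C_{6}+C_{8}$ of $H(2,\mathcal{A})$, the set $\{C_{2}+C_{4},\,C_{3}+C_{7},\,C_{6}+C_{8}\}$ is precisely the complement of $\{C_{5},\,C_{9}\}$. Hence a rank~$3$ fusion isolating $(C_{2}+C_{4})+(C_{3}+C_{7})+(C_{6}+C_{8})$ can only come from the partition $\{C_{1}\},\ \{C_{5},C_{9}\},\ \{C_{2}+C_{4},C_{3}+C_{7},C_{6}+C_{8}\}$, which is exactly the partition examined in Observation~$8$; and a rank~$4$ fusion isolating $(C_{2}+C_{4})+(C_{3}+C_{7})+(C_{6}+C_{8})$ is forced to split the two leftover matrices into the singletons $\{C_{5}\}$ and $\{C_{9}\}$, so it is in particular a $C_{5}$-isolating fusion of rank~$4$, which Observation~$1$ rules out.

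To put this in the column form used for the other Observations, I would first note the identity $C_{1}+C_{5}+C_{9}+(C_{2}+C_{4})+(C_{3}+C_{7})+(C_{6}+C_{8})=J$, which follows from $(I+A(\mathrm{X})+A(\overline{\mathrm{X}}))^{\otimes 2}=J$. Since $J$ acts as $n^{2}$ on $\chi_{1}$ and as $0$ on every $\chi_{i}$ with $i\ge 2$, and the $C_{1}$ column of $\mathcal{P}(H(2,\mathcal{A}))$ is all ones, the entry in row $\chi_{i}$, $i\ge 2$, of the $(C_{2}+C_{4})+(C_{3}+C_{7})+(C_{6}+C_{8})$ column equals $-1$ minus the entry of the $C_{5}+C_{9}$ column in the same row; a one-line check against the table in Observation~$8$ confirms this. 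Consequently two entries of the $(C_{2}+C_{4})+(C_{3}+C_{7})+(C_{6}+C_{8})$ column coincide if and only if the corresponding entries of the $C_{5}+C_{9}$ column do, so its poset of strict inequalities is the one from Observation~$8$ inverted, and every collapse available there is available here and conversely.

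It then follows from Observation~$8$ that the number of distinct entries in this column can drop to $3$ only in the parameter family $r=-1-s$, $k=\ell$, in which case the orthogonality relations of Section~$2.2$ force $k=\ell=2(r+r^{2})$. This is precisely the family of Theorem~\ref{Payley}, and the resulting scheme is the rank~$3$ fusion $\{C_{1},\ (C_{2}+C_{4})+(C_{3}+C_{7})+(C_{6}+C_{8}),\ C_{5}+C_{9}\}$ recorded there, while no rank~$4$ isolating fusion survives by the argument above. I do not expect a real obstacle in this case: it is essentially a corollary of Observation~$8$. The only points requiring care are the bookkeeping that the three competing sub-cases in Observation~$8$ each die by a sign contradiction among $k,\ell,r,s$, and the check that the one surviving case is genuinely a new fusion rather than the homogeneous fusion of Corollary~\ref{homofusion} or one of the imprimitive fusions of Theorem~\ref{Completegraphs}.
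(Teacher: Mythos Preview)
Your proposal is correct and follows essentially the same route as the paper: the paper's proof simply displays the $(C_{2}+C_{4})+(C_{3}+C_{7})+(C_{6}+C_{8})$ column and notes that ``the entries of this isolating fusion are the dual of those in Observation~8'' and that ``it follows a similar proof.'' Your write-up in fact supplies more detail than the paper does, namely the $J$-identity justification for the duality and the explicit Observation~1 argument eliminating the rank~$4$ case.
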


\begin{proof}

We must have the following sum of columns by itself 
in the new character table for any feasible $(C_{2}+C_{4})+(C_{3}+C_{7})+(C_{6}+C_{8})$-isolating fusion.
  \[
\begin{blockarray}{cc}
\mathbf{(\textit{C}_{2}+\textit{C}_{4})+(\textit{C}_{3}+\textit{C}_{7}})+(\textit{C}_{6}+\textit{C}_{8})\\
\begin{block}{(c)c}
  2\ell+2k+2k\ell & \chi_{1} \\
  \ell-1-r(k-\ell) & \chi_{2} \\
  \ell-1-s(k-\ell) & \chi_{3} \\
  -2-2r^{2}-2r & \chi_{4} \\
  -2-2rs-r-s & \chi_{5} \\
  -2-2s^{2}-2s & \chi_{6} \\
\end{block}
\end{blockarray}
 \]
 
 As we can note that the entries of this isolating fusion are the dual of those in Observation $8$. It follows a similar proof.
\end{proof}

\medskip

\begin{theorem}\label{mainresult}
If $\mathcal{A}$ does not belong to the families of strongly-regular graphs in Theorems \ref{Completegraphs}--\ref{Clebsh}, then the only nontrivial fusion of $H(2,\mathcal{A})$ is the homogeneous fusion $H(2,\mathcal{T}_{\mathcal{A}})$.

\end{theorem}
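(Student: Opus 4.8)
The plan is to assemble the case analysis carried out in Observations $1$--$19$ and in Lemmas \ref{rank5}, \ref{rank4}, \ref{rank3}. First I would record the two elementary bookkeeping facts. Since $\mathcal{A}$ has rank $3$ and $n=2$, the generalized Hamming scheme $H(2,\mathcal{A})$ has rank $\binom{2+2}{2}=6$; hence any fusion $\mathcal{B}$ of $H(2,\mathcal{A})$ that is neither the trivial fusion (rank $2$) nor $H(2,\mathcal{A})$ itself (rank $6$) has rank $3$, $4$, or $5$. By Corollary \ref{homofusion} the homogeneous fusion $H(2,\mathcal{T}_{\mathcal{A}})$ is a fusion of rank $3$ that exists for every $\mathcal{A}$, so the content of the theorem is the \emph{uniqueness} statement: under the hypothesis, no other $\mathcal{B}$ of rank $3$, $4$, or $5$ can occur.

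Next I would dispose of the imprimitive case. If $\mathrm{X}$ or $\overline{\mathrm{X}}$ is disconnected, then $\mathcal{A}$ is one of the rank $3$ imprimitive schemes described in Theorem \ref{Completegraphs}, contradicting the hypothesis; so we may assume $\mathcal{A}$ is primitive, i.e.\ $k>r>0>-1>s$ and $\ell>-1-s>0>-1>-1-r$. This is exactly the standing assumption under which Observations $1$--$19$ and Lemmas \ref{rank5}--\ref{rank3} were established, so from here they can be invoked directly.

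Then I would run through the three possible ranks. If $\mathcal{B}$ has rank $5$, its defining partition isolates three of the five nontrivial basis matrices of $H(2,\mathcal{A})$, and Lemma \ref{rank5} (built from Observations $1$--$4$ via the Bannai--Muzychuk criterion, Lemma \ref{Bannai-Muzychuk}) shows that every such partition either fails the criterion for all primitive $\mathcal{A}$ or forces the eigenvalue relations of Theorem \ref{News}; both are excluded by hypothesis. If $\mathcal{B}$ has rank $4$, Lemma \ref{rank4} (using Observations $1$--$7$) shows that the only partitions surviving the criterion force $\mathcal{A}$ into one of the families of Theorems \ref{Completegraphs}--\ref{Clebsh} (specifically those of Theorems \ref{Payley} and \ref{Crazyrank4}), again excluded. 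If $\mathcal{B}$ has rank $3$, Lemma \ref{rank3} (using Observations $8$--$15$ together with their duals, Observations $16$--$19$) shows that the only surviving partition not excluded by the hypothesis is the one giving $H(2,\mathcal{T}_{\mathcal{A}})$. Combining the three cases, $\mathcal{B}=H(2,\mathcal{T}_{\mathcal{A}})$, which is the assertion.

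The work is organizational rather than computational: one must be certain that the $52$ candidate partitions of $\{1,\dots,5\}$ — cut down using the switch-partner symmetry $\mathrm{X}\leftrightarrow\overline{\mathrm{X}}$ — are all accounted for across Lemmas \ref{rank5}--\ref{rank3}, and that every eigenvalue condition appearing in an Observation coincides with one of the parameter families isolated in Theorems \ref{Completegraphs}--\ref{Clebsh}. The substance of the argument lies in the poset computations inside the Observations, where for each column (or column sum) of $\mathcal{P}(H(2,\mathcal{A}))$ one determines exactly which pairs among the six eigenvalues can coincide; the main theorem is then a direct corollary of that catalogue.
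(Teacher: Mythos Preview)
Your proposal is correct and follows essentially the same approach as the paper: reduce to ranks $3$, $4$, $5$, dispose of the imprimitive case via Theorem \ref{Completegraphs}, and then invoke Lemmas \ref{rank5}--\ref{rank3} (built on Observations $1$--$19$ and the Bannai--Muzychuk criterion) to eliminate every partition except the homogeneous one. The paper makes one aspect slightly more explicit than you do, namely the converse verification that for each surviving partition the \emph{combination} of constraints from several Observations forces exactly the parameter family of the corresponding Theorem (it does this via a summary table and two worked examples), but your final paragraph already flags this as the organizational work that must be carried out.
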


\begin{proof}
As we can see from Lemmas \ref{rank5}--\ref{rank3}, $38$ out of $52$ possible fusions of $H(2,\mathcal{A})$ are not feasible using the Bannai-Muzychuk criterion in Lemma \ref{Bannai-Muzychuk}. In this Section, we prove that all of the $11$ special case fusions of $H(2,\mathcal{A})$ are only feasible if and only if the corresponding conditions on eigenvalues listed in Theorems \ref{Completegraphs}--\ref{Clebsh} are satisfied.

A major part of the proof is written in the list of Observations above. We know that the rank of a fusion corresponds to the number of columns in the character table which are analysed case by case and are referenced by their Observation numbers as they appear in the fusion. The conditions obtained for special case fusions covered in Lemmas \ref{rank5}--\ref{rank3} are also referenced by the Observation number.

\medskip

All that is left to show is that all the feasible nontrivial fusions occur when $\mathcal{A}$ belongs to one of the families in Theorems \ref{Completegraphs}--\ref{Clebsh}. For example, the first four non-trivial fusions (these are the fusions denoted by (1)--(4) in Theorem \ref{Completegraphs}) hold only for the rank $3$ imprimitive scheme and hence are ruled out for any other strongly-regular graphs in Observations $1$--$5$ where we assume that $s<-1$ and $k\neq r$. We prove two examples here and the rest of the other fusions have similar proofs.

\begin{enumerate}
    \item Consider the nontrivial rank $5$ fusion in Theorem \ref{News}(i). The columns of the character table for this fusion are the ones we discussed in Observations $1$, $4$, $10$ and $12$. The corresponding restrictions on eigenvalues obtained are $kr=s^{2}$, $\ell=r-2s-1$ and $-k-kr+r\ell=-2s-2s^{2}$. We express $-k-kr+r\ell=-2s-2s^{2}$ in terms of $r$ and $s$ using the first two equations. After simplification we get a quadratic equation in $s$,
    $$
    s^{2}\left(\frac{1}{r}-1\right)+s(2-2r)+(r^2-r)=0.
    $$
    
    The quadratic equation has no roots when $r>1$. Therefore, we must have $r=1$ for this rank $5$ fusion to be feasible. Hence, we get the same restrictions as in part (i) of Theorem \ref{News}.
    \item Next we consider the nontrivial rank $5$ fusion in part (ii) of Theorem \ref{News}. The columns of the character table for this fusion are the ones we discussed in Observations $2$, $3$, $6$ and $12$. The corresponding restrictions on eigenvalues obtained are $k+s=2r$, $-\ell(1+s)=(1+r)^{2}$ and $-k-ks+s\ell=-2r-2r^{2}$. Hence, we get the same restrictions as in part (ii) of Theorem \ref{News}.
\end{enumerate}

\begin{table}[ht]
\centering
 \begin{tabular}{||c c c||}
 \hline
 Feasible Fusions
 & Observation(s) & Eigenvalue Classification \\ [0.3ex] 
 \hline\hline
  (1) and (1') & 1, 2, 3 & Imprimitive graphs in \ref{Completegraphs} \\
 \hline
  (2) and (2') & 1, 3 & Imprimitive graphs in \ref{Completegraphs} \\
 \hline
   (3) and (3') & 2, 5 & Imprimitive graphs in \ref{Completegraphs} \\
 \hline
   (4) and (4') & 5 & Imprimitive graphs in \ref{Completegraphs} \\
 \hline
 Fusion (5) & 1, 4, 10, 12 & \ref{News}(i) \\
 \hline
Fusion (6) & 8, 12, 15 &  \ref{Payley}\\
 \hline
  Fusion (7)  & 8, 19 & \ref{Payley} \\
 \hline
 Fusion (8) & 12, 17 & \ref{Payley}\\
 \hline
 Fusion (9)  & 6, 10, 12 & \ref{Crazyrank4} \\
 \hline
  Fusion (5')&  2, 3, 6, 12 & \ref{News}(ii) \\
 \hline
  (10) and (10') & 10, 18 & \ref{Clebsch} \\
 \hline
  (11) and (11') & 13, 16 & \ref{Clebsh} \\
 \hline

\end{tabular}
\caption{Feasible fusions}

\end{table}
\medskip

Therefore, it is clear from the above data that the only fusion of the generalized Hamming scheme that is independent of parameters is the homogeneous fusion. Every other fusion is only 
satisfied for special graph families as listed above.

\end{proof}

\section{Future work}

In this section, we illustrate the idea of fusions of $H(2,\mathcal{A})$ using table algebras. As an algebra the adjacency algebra of an association scheme is a standard integral \textit{table algebra}. The \textit{wreath product} of two table algebras $\mathcal{A}=\{A_{0},A_{1},A_{2}\}$ and $\mathcal{B}=\{B_{0},B_{1},B_{2}\}$ is given by $$ \mathcal{A}\wr \mathcal{B}=\{A_{i}\otimes B_{0}; \  i\geq 0  , \  (A_{0}+A_{1}+A_{2}) \otimes B_{j};  \ j\geq 0\}.
$$ The tensor product of two table algebras is a table algebra. Hence, $\mathcal{A}\otimes \mathcal{A}$ is a table algebra. As a table algebra, $H(n,\mathcal{A})$ is a fusion of $\mathcal{A}\otimes \mathcal{A}$ denoted by $\text{Sym}^{n}(\mathcal{A})$ that is obtained by fusing symmetric tensors of basis elements. For more details on table algebras, see Blau~\cite{TA}.

We applied the Bannai-Muzychuk criteria (see Lemma \ref{Bannai-Muzychuk})
on the character table of the rank $9$ tensor product scheme $\mathcal{A}\otimes \mathcal{A}$  to calculate all fusions that are guaranteed regardless of the eigenvalues. We calculated this using GAP~\cite{GAP4}. The GAP code used for this project is open sourced at GitHub repository\footnote{\url{https://github.com/nehamainali/GeneralizedHammingScheme}}. 

The association scheme $\mathcal{A}\otimes\mathcal{A}$ has a natural automorphism
${}': x\otimes y\mapsto y\otimes x$. A partition $\tau$ of the
standard basis for $\{A_i\otimes A_j\}_{i,j=0}^2$ defines a fusion if and only if $\tau'$ determines a fusion. 
If the partitions $\tau$ and $\tau'$ determine isomorphic fusions, then the resulting isomorphism between the fusions is denoted by $\cong$.
The list of all fusions that exist for any rank $3$ association scheme $\mathcal{A}$ are given below.

\[
\begin{array}{rcl}
    \mathcal{A} \otimes \mathcal{A} &=& \{C_{1}, \ C_{2}, \ C_{3}, \ C_{4}, \ C_{5}, \ C_{6}, \ C_{7}, \ C_{8}, 
     \ C_{9}\} \\
     \mathcal{T}_{\mathcal{A}\otimes \mathcal{A}} &=&\{C_{1}, \ C_{2}+C_{3}+ C_{4}+C_{5}+C_{6}+C_{7}+C_{8}+C_{9}\} \\
     \mathcal{T}_{\mathcal{A}}\otimes \mathcal{A}  \cong  \mathcal{A}\otimes \mathcal{T}_{\mathcal{A}}  &=&\{C_{1}, \ C_{2}, \ C_{3}, \ C_{4}+C_{7}, \ C_{5}+C_{8}, \ C_{6}+C_{9}\}  \\
  \mathcal{A} \wr \mathcal{A}  \cong (I \otimes \mathcal{A})\wr (\mathcal{A} \otimes I) &=&\{C_{1}, \ C_{2}, \ C_{3}, \ C_{4}+C_{5}+C_{6}, \ C_{7}+C_{8}+C_{9}\} \\
 \mathcal{A} \wr \mathcal{T}_{\mathcal{A}}  \cong (I \otimes \mathcal{A})\wr \mathcal{T}_{(\mathcal{A} \otimes I)}  &=&\{C_{1}, \ C_{2}, \ C_{3}, \ C_{4}+C_{5}+C_{6}+C_{7}+C_{8}+C_{9}\} \\
     \mathcal{T}_{\mathcal{A}} \otimes \mathcal{T}_{\mathcal{A}}&=&\{C_{1}, \ C_{2}+ C_{3}, \ C_{4}+C_{7}, \  C_{5}+C_{6}+C_{8}+C_{9}\} \\
     H(2,\mathcal{T}_{\mathcal{A}})&=&\{C_{1}, \ C_{2}+ C_{3}+C_{4}+C_{7}, \  C_{5}+C_{6}+C_{8}+C_{9}\} \\
     \mathcal{T}_{\mathcal{A}} \wr \mathcal{T}_{\mathcal{A}}&=&\{C_{1}, \ C_{2}+ C_{3}+C_{5}+C_{6}+C_{8}+C_{9}, \ C_{4}+ C_{7}\}  \\
     H(2,\mathcal{A})&=&\{C_{1}, \ C_{2}+ C_{4}, \ C_{3}+C_{7}, \ C_{5}, \ C_{6}+C_{8}, \ C_{9}\} \\
     (I\otimes\mathcal{T}_{\mathcal{A}})\wr (\mathcal{A}\otimes I) \cong  \mathcal{T}_{\mathcal{A}} \wr \mathcal{A}   &=&\{C_{1}, \ C_{2}+ C_{5}+C_{8}, \ C_{3}+C_{6}+C_{9}, \ C_{4}+C_{7}\} \\
\end{array} 
\]

The generalized Hamming scheme $H(2,\mathcal{A})$ is one of the above $13$ fusions that exist for all rank $3$ symmetric association schemes $\mathcal{A}$. For asymmetric rank $3$ association schemes $\mathcal{A}$, we get three additional fusions of $\mathcal{A} \otimes \mathcal{A}$ along with the $13$ fusions listed above. They are listed below,

\begin{enumerate}
    \item $\{C_{1}, \ C_{2}+ C_{7}, \ C_{3}+C_{4}, \ C_{5}+C_{9}, \ C_{6}, \ C_{8}\}$, 
     \item $\{C_{1}, \ C_{2}+ C_{3}, \ C_{4}+C_{7}, \ C_{5}+C_{9}, \ C_{6}+C_{8}\}$, 
      \item $\{C_{1}, \ C_{2}+ C_{3}+C_{4}+C_{7}, \ C_{5}+C_{9}, \ C_{6}+C_{8}\}$.
\end{enumerate}

An interesting project here would be to determine the strongly-regular graphs for which $\mathcal{A} \otimes \mathcal{A}$ has unexpected fusions, that is, fusions other than the ones listed above. Since, we have dealt with $H(2,\mathcal{A})$ here, the problem is down to two main cases, $\mathcal{A} \otimes \mathcal{A}$ and $\mathcal{A} \wr \mathcal{A}$.

\bibliographystyle{plain}
\bibliography{sample}

\begin{thebibliography}{10}

\bibitem{Bailey}
R.~A. Bailey.
\newblock {\em Association {S}chemes}, volume~84 of {\em Cambridge Studies in
  Advanced Mathematics}.
\newblock Cambridge University Press, Cambridge, 2004.

\bibitem{Bannai}
Eiichi Bannai.
\newblock Subschemes of some association schemes.
\newblock {\em J. Algebra}, 144(1):167--188, 1991.

\bibitem{BannaiSong}
Eiichi Bannai and Sung~Yell Song.
\newblock Character tables of fission schemes and fusion schemes.
\newblock {\em European J. Combin.}, 14(5):385--396, 1993.

\bibitem{TA}
Harvey~I. Blau.
\newblock Table algebras.
\newblock {\em European J. Combin.}, 30(6):1426--1455, 2009.

\bibitem{BoseMesner}
R.~C. Bose and Dale~M. Mesner.
\newblock On linear associative algebras corresponding to association schemes
  of partially balanced designs.
\newblock {\em Ann. Math. Statist.}, 30:21--38, 1959.

\bibitem{Brouwer}
Andries~E. Brouwer and Willem~H. Haemers.
\newblock {\em Spectra of {G}raphs}.
\newblock Universitext. Springer, New York, 2012.

\bibitem{Delsarte}
P.~Delsarte.
\newblock An algebraic approach to the association schemes of coding theory.
\newblock {\em Philips Res. Rep. Suppl.}, 10:vi+97, 1973.

\bibitem{FadRussian}
I.~A. Farad{\u z}ev.
\newblock Cellular subrings of the symmetric square of a cellular ring of rank
  {$3$}.
\newblock In {\em Investigations in the algebraic theory of combinatorial
  objects ({R}ussian)}, pages 76--95. Vsesoyuz. Nauchno-Issled. Inst. Sistem.
  Issled., Moscow, 1985.

\bibitem{FadRussianTranslated}
I.~A. Farad{\u z}ev, A.A. Ivanov, Mikhail Klin, and Andrew Woldar.
\newblock {\em Investigations in Algebraic Theory of Combinatorial Objects},
  volume~84.
\newblock 01 1994.

\bibitem{GAP4}
The GAP~Group.
\newblock {\em {GAP -- Groups, Algorithms, and Programming, Version 4.11.1}},
  2021.

\bibitem{HammingGodsil}
Chris Godsil.
\newblock Generalized {H}amming scheme.
\newblock page arXiv:1011.1044., 11 2010.

\bibitem{Higman}
D.~G. Higman.
\newblock Coherent algebras.
\newblock {\em Linear {A}lgebra and its Applications}, 93:209--239, 1987.

\bibitem{Scott}
Vasco~Mo\c{c}o Mano, Enide~Andrade Martins, and Lu\'{\i}s~Almeida Vieira.
\newblock Some results on the {K}rein parameters of an association scheme.
\newblock In {\em Dynamics, {G}ames and {S}cience}, volume~1 of {\em CIM Ser.
  Math. Sci.}, pages 441--454. Springer, Cham, 2015.

\bibitem{HammingMuzychuk}
M.~E. Muzychuk.
\newblock Subschemes of {H}amming {S}chemes {$H(n,q)$}, {$q\geq 4$}.
\newblock {\em Acta Appl. Math.}, 29:119--128, 1992.

\bibitem{JohnsonMuzychuk}
Mikhail Muzychuk.
\newblock Subschemes of the {J}ohnson {S}cheme.
\newblock {\em European J. Combin.}, 13(5):187--193, 1992.

\bibitem{Sankey}
A.~D. Sankey.
\newblock Strongly regular fusions of tensor products of strongly regular
  graphs.
\newblock {\em Rocky Mountain J. Math.}, 24(2):709--718, 1994.

\end{thebibliography}

\end{document}